\def\cleardoublepage{\clearpage\if@twoside \ifodd\c@page\else%
         \hbox{}%
     \thispagestyle{empty}
     \newpage%
     \if@twocolumn\hbox{}\newpage\fi\fi\fi}
\let\cleardoublepage\clearpage
\newtheorem{thm}{Theorem}[section]
\newtheorem{lem}[thm]{Lemma}
\newtheorem{pro}[thm]{Proposition}
\newtheorem{den}[thm]{Definition}
\newtheorem{oss}[thm]{Remark}
\numberwithin{equation}{section}
\newcommand{\D}[1]{\mbox{\rm #1}}
\newcommand{\dd}{\D{d}}
\begin{document}

\title[Asymptotic behaviour for the fractional PME with variable density]{On the asymptotic behaviour of solutions to the \\ fractional porous medium equation with variable density}
\author {Gabriele Grillo, Matteo Muratori, Fabio Punzo}

\address {Gabriele Grillo, Matteo Muratori: Dipartimento di Matematica ``F. Brioschi'', Politecnico di Milano, Piaz\-za Leonardo da Vinci 32, 20133 Milano, Italy}

\email {gabriele.grillo@polimi.it}
\email{matteo.muratori@polimi.it}

\address{Fabio Punzo: Dipartimento di Matematica ``F. Enriques'', Universit\`a degli Studi di Milano, via Cesare Saldini 50, 20133 Milano, Italy}

\email{fabio.punzo@unimi.it}

\begin{abstract}
We are concerned with the long time behaviour of solutions to the
fractional porous medium equation with a variable spatial density. We prove
that if the density decays slowly at infinity, then the solution
approaches the Barenblatt-type solution of a proper singular
fractional problem. If, on the contrary, the density decays rapidly
at infinity, we show that the minimal solution multiplied by a suitable
power of the time variable converges to the minimal solution of a certain fractional sublinear elliptic equation.
\end{abstract}
\maketitle
\tableofcontents

\section{Introduction}
We investigate the asymptotic behaviour, as $t\to\infty,$ of nonnegative solutions to the following parabolic nonlinear, degenerate, \emph{nonlocal} weighted problem:
\begin{equation}\label{eq: regular}
\begin{cases}
\rho(x) u_t + (-\Delta)^s ( u^m ) = 0  &  \ \textrm{in }  \mathbb{R}^d \times (0,\infty) \, , \\
u = u_0  & \ \textrm{on }  \mathbb{R}^d \times \{ 0 \}  \, ,
\end{cases}
\end{equation}
where the initial datum $u_0$ is nonnegative and belongs to
$$ L^1_\rho({\mathbb R}^d)=\left\{u: \| u \|_{1,\rho}=\int_{\mathbb{R}^d}|u(x)| \, \rho(x){\rm d}x<\infty \right\} $$
and the weight $\rho$ is assumed to be positive, locally essentially bounded away from zero (namely $ \rho^{-1}\in L^{\infty}_{\rm loc}({\mathbb R}^d) $) and to satisfy suitable decay conditions at infinity, which we shall specify later.
As for the parameters involved, we shall assume throughout the paper that $m > 1 $ and $d > 2s$. Moreover, for all $s \in (0,1)$, the symbol $ (-\Delta)^s$ denotes the fractional Laplacian operator, that is
\begin{equation}\label{eq: def-basilare-frac-lap}
(-\Delta)^s(\phi)(x)= p.v.\ C_{d,s} \int_{\mathbb{R}^d}  \frac{\phi(x)-\phi(y)}{|x-y|^{d+2s}} \, \mathrm{d}y \quad \forall x \in \mathbb{R}^d \, , \ \forall \phi \in C^\infty_c(\mathbb{R}^d) \, ,
\end{equation}
$ C_{d,s} $ being a suitable positive constant depending only on $s$ and $d$. For less regular functions, the fractional Laplacian is meant in the usual distributional sense.

For weights $\rho(x) $ that decay \emph{slowly} as $|x|\to\infty $, we shall also be able to consider the more general problem
\begin{equation}\label{eq: measure}
\begin{cases}
\rho(x) u_t + (-\Delta)^s ( u^m ) = 0  &  \ \textrm{in} \ \mathbb{R}^d \times (0,\infty) \, , \\
\rho(x) u = \mu & \ \textrm{on }  \mathbb{R}^d \times \{ 0 \} \, ,
\end{cases}
\end{equation}
where $\mu$ is a positive finite measure. More precisely, here we shall assume that $\rho $ complies with the following assumptions:
\begin{equation*}\label{eq: ass-rho}
c |x|^{- \gamma_0 } \le \rho(x) \le C |x|^{- \gamma_0 } \ \
\textrm{a.e.\ in } B_1 \quad \textrm{and} \quad  c |x|^{-\gamma}
\le \rho(x) \le C |x|^{-\gamma} \ \ \textrm{a.e.\ in } B_1^c
\end{equation*}
for some $ \gamma \in [0,2s)$, $ \gamma_0 \in [0,\gamma] $ and $0< c
< C $ ($B_R$ denotes the ball of radius $R$ centered at $x=0$,
while $B_R^c$ denotes its complement). Note that in this case
$\rho(x) $ is allowed to have a singularity at $x = 0 $.

\smallskip
The local version of problem \eqref{eq: regular}, that is
\begin{equation}\label{e3i}
\begin{cases}
 \rho(x) u_t -\Delta( u^m ) = 0  &  \,\,  \textrm{in} \ \mathbb{R}^d \times (0,\infty)\, , \\
 u = u_0 & \ \textrm{on} \ \mathbb{R}^d\times\{0\}  \, ,
\end{cases}
\end{equation}
has been largely studied in the literature (see e.g.\
\cite{KR2,Eid90,EK,KKT,RV08,Pu1,GMPo,GMP}). In particular, for
$d\geq 3$, it is shown that \eqref{e3i} admits a unique very weak
solution if $\rho(x)$ decays \emph{slowly} as $|x|\to\infty$,
while nonuniqueness prevails when $\rho(x)$ decays \emph{fast
enough} as $|x|\to\infty$. In the latter case, uniqueness can be
restored by imposing extra conditions at infinity on the solutions. Also note that,
independently of the behaviour of $\rho(x)$ as $|x|\to\infty$,
existence and uniqueness of the so-called {\em weak energy
solutions} (namely solutions belonging to suitable functional
spaces) hold true (see \cite{GMPo}). Furthermore, the long time
behaviour of solutions to problem \eqref{e3i} has been addressed
in \cite{RV06, RV09} and \cite{KRV10}. To be specific, in
\cite{RV09} it is proved that if $\|u_0\|_{1,\rho}=M>0$, $\rho>0$
and $\rho(x)\sim |x|^{-\gamma}$ as $|x|\to \infty$, for some
$\gamma\in [0,2)$, then the solution $u$ to problem \eqref{e3i}
satisfies
\[ \lim_{t \to \infty} \left\| u(t)- u^*_M(t) \right\|_{1,\rho} =0  \]
and
\[ \lim_{t \to \infty} t^{\alpha} \left\|u(t)-u^*_M(t) \right\|_{\infty} =0 \, . \]
Here, $u^*_M$ is the self-similar Barenblatt solution of mass $\int_{\mathbb{R}^d} u^*_M \rho =  M$, that is
\[u^*_M(x,t)=t^{-\alpha}F\left(t^{-\kappa}|x|\right) \quad  \forall (x,t) \in \mathbb{R}^d \times (0,\infty)  \, ,  \]
with
$$ F(\xi)=(C-k\xi^{2-\gamma})_+^{\frac 1{m-1}} \quad \forall \xi\geq 0  $$
for suitable positive constants $C$ and $k$ depending on $M$, $m$, $d$, $\gamma$. Moreover,
\[ \quad \alpha=(d-\gamma)\kappa \, , \quad \kappa=\frac 1{d(m-1)+2-m\gamma}  \, . \]
We stress that $u^*_M$ solves the singular problem
\[
\begin{cases}
|x|^{-\gamma} u_t -\Delta( u^m ) = 0  &  \quad  \textrm{in } {\mathbb R}^d\times(0,\infty) \, , \\
|x|^{-\gamma} u  = M \delta &  \quad \textrm{on } \mathbb{R}^d \times \{ 0 \} \, ,
\end{cases}
\]
where $M=\|u_0\|_{1,\rho}$ and $ \delta$ is the {\it Dirac delta}
centred at $x=0 $.
Note that, for $\rho \equiv 1$ the same asymptotic results were shown in \cite{FK} and in \cite{Vaz03}.

\smallskip

On the contrary, in \cite{KRV10} it is proved that if $\rho>0$ and $\rho(x)\sim
|x|^{-\gamma}$ as $|x|\to\infty$, for some $\gamma>2$, then the minimal solution to problem \eqref{e3i}, which is unique in the class of solutions fulfilling
$$
\frac{1}{R^{d-1}}\int_{\partial B_R}\int_0^t u^m(x,\tau)\,{\rm d}\tau{\rm d}S \to 0 \quad \textrm{as } R\to\infty
$$
for all $t>0$, satisfies
\[t^{\frac 1 {m-1}}u(x,t)\to (m-1)^{-\frac{1}{m-1}} W^{\frac 1m}(x)\quad \textrm{as } t\to \infty \, , \quad \textrm{uniformly w.r.t.\ } x \in \mathbb{R}^d \,. \]
Here $W$ is the unique (minimal) positive solution to the sublinear elliptic equation
\[-\Delta W = \rho \, W^{\frac 1m} \quad \textrm{in } \mathbb{R}^d \, ,  \]
and it is such that
\[ \lim_{|x|\to\infty} W(x)=0 \,. \]

\smallskip

Problem \eqref{eq: regular} with $\rho\equiv 1$, nonnegative initial data $u_0$ in $L^1({\mathbb R}^d)$ and $s\in (0,1)$, namely
\begin{equation}\label{e1i}
\begin{cases}
u_t + (-\Delta)^s ( u^m ) = 0      & {\rm in} \ {\mathbb R}^d\times(0,\infty)  \, , \\
u = u_0  &  {\rm on} \ {\mathbb R}^d \times \{ 0 \}  \, ,
\end{cases}
\end{equation}
has been addressed in the breakthrough papers
\cite{DQRV2, DQRV} for $m>0$. In particular, existence,
uniqueness and qualitative properties of solutions are
studied. Moreover, in \cite{BV} sharp quantitative a priori estimates for
solutions are proved. The asymptotic behaviour
has recently been investigated in \cite{VazConstr}. More precisely, it is
first shown that, for any $M>0$, there exists a unique solution
$u^*_M$ to the singular problem
\[
\begin{cases}
u_t + (-\Delta)^s ( u^m ) = 0  &{\rm in}\ {\mathbb R}^d\times(0,\infty) \, , \\
u = M \delta & {\rm on} \ {\mathbb R}^d \times \{ 0 \}  \, .
\end{cases}
\]
Furthermore, such $u^*_M$ has the following self-similar form:
\[u^*_M(x,t)=t^{-\alpha}f(t^{-\kappa}|x|) \quad \forall(x,t) \in \mathbb{R}^d \times (0,\infty) \, , \]
where
$$
\alpha=\frac{d}{d(m-1)+2s} \, , \quad \kappa=\frac 1{d(m-1)+2s}
$$
and the profile $f:[0,\infty)\to (0,\infty)$ is a bounded,
H\"older continuous decreasing function, with $f(r)\to 0$ as $r\to
\infty$. In view of such properties, $u^*_M$ is still called a {\it
Barenblatt-type} solution. Then it is proved that the solution $u$ to problem \eqref{e1i} satisfies
\[
\lim_{t\to \infty} \left\| u(t)- u^*_M(t) \right\|_1 = 0
\]
and
\begin{equation}\label{Linfty}
\lim_{t\to \infty}  t^{\alpha} \left\| u(t)- u^*_M(t) \right\|_\infty = 0 \, .
\end{equation}

\smallskip

Existence and uniqueness of nonnegative bounded solutions to problem
\eqref{eq: regular} for nonnegative initial data $u_0 \in L^1_\rho({\mathbb R}^d)\cap L^\infty({\mathbb R}^d)$ and strictly
positive weights have been investigated in \cite{PT1, PT2}. More precisely, it is proved that if $\gamma\in (0, 2s)$ and there exists
$C_0>0$ such that if
\[
\rho(x)\geq C_0 |x|^{-\gamma} \quad \textrm{a.e.\ in } B_1^c \, ,
\]
then problem \eqref{eq: regular} admits a unique bounded solution. Furthermore, when
$\gamma\in (2s, \infty)$ and there exists $C_0>0$ such that
\begin{equation}\label{decay}
\rho(x)\leq C_0 |x|^{-\gamma}\quad \textrm{a.e.\ in } B_1^c \, ,
\end{equation}
we have existence of solutions satisfying a proper decaying condition at infinity. In the present paper we shall prove uniqueness within a certain class of solutions under the weaker requirement that \eqref{decay} holds true with  $d>4s$ and $\gamma\in (2s, d-2s]\cap(4s,\infty)$ (see Theorem \ref{prthm6}). Actually, for generic positive densities $\rho \in L^\infty_{\rm loc}(\mathbb R^d)$ such that $ \rho^{-1}\in L^\infty_{\rm loc}(\mathbb R^d)$, namely without assuming further conditions on $\rho(x) $ as $|x|\to\infty $, one can also prove existence and uniqueness of \emph{weak energy solutions} in the same spirit as \cite{GMPo} (see Proposition \ref{prop2c}). The point is that the uniqueness results of Theorem \ref{prthm6} hold for a more general notion of solution, and we shall use them as such.


\smallskip

The main goal of this paper is to study the large time behaviour of
solutions to problem \eqref{eq: regular}. To this end, similarly to the results recalled above in the local case, we shall distinguish two situations:
\begin{enumerate}[i)]
\item\label{slow} $\rho(x)\to 0$ \emph{slowly} as $|x|\to\infty$, in the sense that for a suitable $\gamma\in (0, 2s)$ there holds
\begin{equation}\label{decay2}
\lim_{|x|\to \infty}\rho(x)|x|^{\gamma}= c_\infty>0 \, ;
\end{equation}
\item\label{fast} $\rho(x)\to 0$ \emph{rapidly} as $|x|\to\infty$, in the sense that for a suitable $\gamma\in (2s, \infty)$ \eqref{decay} holds true.
\end{enumerate}
In case \ref{slow}) we shall describe the asymptotic behaviour of
solutions to problem \eqref{eq: measure}, namely with initial data
which can be positive finite measures. Such asymptotics is obtained in terms of a  Barenblatt-type solution to a proper
nonlocal singular problem, that is the unique solution $u^{c_\infty}_{M}$ to
\begin{equation}\label{e10}
\begin{cases}
c_\infty |x|^{-\gamma} u_t + (-\Delta)^s ( u^m ) = 0  &  \textrm{in} \ \mathbb{R}^d \times (0,\infty) \, , \\
c_\infty  |x|^{-\gamma} u  = M \delta &  \textrm{on } \mathbb{R}^d \times \{0\} \, ,
\end{cases}
\end{equation}
where $M>0$ is the (fixed) mass and $c_\infty$ is as in \eqref{decay2}. Existence and uniqueness of solutions to \eqref{e10} actually
follow from the results established in \cite{GMP2} for the more general problem \eqref{eq: measure}. In particular, they are ensured provided $ \gamma \in [0,2s) \cap [0,d-2s] $.

Coming back to the asymptotics of the solutions to the evolution equations considered, we shall show that
\begin{equation}\label{e104-intro}
\lim_{t\to \infty} \left\| u(t)- u^{c_\infty}_{M}(t) \right\|_{1,|x|^{-\gamma}} = \lim_{t \to \infty} \int_{\mathbb{R}^d} \left| t ^\alpha u(t^\kappa x , t)-u^{c_\infty}_M (x, 1) \right| |x|^{-\gamma} \mathrm{d}x = 0 \, ,
\end{equation}
where
\[
\alpha=(d-\gamma) \kappa \, , \quad \kappa=\frac 1{d(m-1)+2s-m\gamma} \, .
\]
In order to prove \eqref{e104-intro}, we partially follow the
general strategy used in the literature to prove similar
convergence results (see e.g.~\cite{FK, Vaz03, Vaz07, RV08, BBDG, VazConstr}). However, here several technical difficulties arise,
due to the simultaneous presence of the weight $\rho(x)$ and of
the nonlocal operator $(-\Delta)^s$. To overcome them, we adapt to
the present situation some ideas used in \cite{GMP2} to prove
existence. The point is that a different
argument in the final convergence step has to be used. Indeed, in
particular in \cite{RV08} and \cite{VazConstr}, convergence is proved in $L^\infty$ by exploiting regularity results for solutions
and for the Barenblatt profile, ensured by \cite{DiB} and \cite{AtC}, respectively. Since such regularity results by now are not available in our case, we cannot use the same techniques.
\smallskip

In case \ref{fast}), the long time behaviour of the minimal solution to problem \eqref{eq: regular} is deeply linked with the minimal solution $w$ to the following nonlocal sublinear elliptic equation:
\begin{equation}\label{19021}
(-\Delta)^s w = \rho\, w^\alpha \quad \textrm{in } \mathbb{R}^d \, ,
\end{equation}
where $\alpha = {1}/{m} \in (0,1)$. Note that the local case $s=1$ has been thoroughly studied (see e.g. \cite{BK,Rad} and references
therein). For general $s\in(0,1)$ it has been addressed in \cite{PT3}, following the same line of arguments of
\cite{BK}. However, in \cite{PT3} it is supposed that
\eqref{decay} holds true for $\gamma>d$ (with $d>4s$) and $\rho \ge 0$ (with $\rho\not
\equiv 0$). Furthermore, \emph{energy solutions} have been dealt
with. In the present work, existence of nontrivial \emph{very weak solutions} is
established whenever \eqref{decay} holds for $\gamma>2s$ (with $d>2s$). In doing
this, a central role will be played by the solution to the linear equation
\[
(-\Delta)^s V = \rho \quad\textrm{in }\mathbb{R}^d \, .
\]
We shall also establish uniqueness of very weak solutions to equation
\eqref{19021}, satisfying suitable extra conditions at infinity, assuming that $d>4s$ and that \eqref{decay} holds for
$\gamma>4s  $.

As for asymptotics, we shall prove that
\[
\lim_{t\to\infty}t^{\frac 1{m-1}}u(x,t)=(m-1)^{-\frac 1{m-1}}
w^{\frac 1m}(x) \, ,
\]
where $w$ is the minimal positive (very weak) solution to
\eqref{19021} with $\alpha=1/m$ and $ u $ is the minimal solution to \eqref{eq: regular}. Note that a similar result in
bounded domains, when $\rho\equiv 1$, has recently been shown in \cite{BSV} (see Remark \ref{ossBSV}).

\smallskip
Let us mention that by our methods
we cannot address the {\it critical} case in which $\rho(x)$ behaves like
$|x|^{-2s}$. In fact, in such case, we are not able either to construct
the asymptotic profile as in $(i)$ or the minimal solution to the sublinear elliptic equation as
in $(ii)$. Observe that for $s=1$ the long time behaviour of
solutions has been investigated in \cite{IS1} for $m=1$, and in \cite{NR,IS2} for $m>1$. For $0<s<1$ and $ \gamma=2s $, the asymptotic behaviour of solutions is then an interesting open problem.

\noindent{\bf Organization of the paper}. In Section \ref{mf} we
give the definitions of solution to problems \eqref{eq: regular}
and \eqref{eq: measure}; moreover, preliminary results concerning the well posedness of the problems are stated. As for long time
behaviour of solutions, our results both for fast decaying densities (Theorem
\ref{thmab2}) and for slowly decaying densities (Theorem \ref{thmab1})
are stated in Section \ref{mr}. In Section \ref{subl} we consider
the sublinear elliptic equation \eqref{19021}, and we show some new
existence and uniqueness results for the corresponding solutions in Theorems
\ref{prthm8} and \ref{02034}, which have also an independent interest. We
take advantage of such results in Section \ref{as2} in order to
prove Theorem \ref{thmab2}. Finally, in Section \ref{as1} we prove Theorem
\ref{thmab1}.

In Appendix \ref{rd}  the well posedness of problem
\eqref{eq: regular} for rapidly decaying densities is proved: here we improve in various directions previous results in \cite{PT1}.

\section{Preliminary results}\label{mf}
We start this section by providing a suitable definition of weak solution to problem \eqref{eq: regular}, which will be primarily interesting for the case of rapidly decaying densities. We shall always assume $\rho \in L_{\rm loc}^\infty(\mathbb{R}^d)$ and $\rho^{-1} \in L_{\rm loc}^\infty(\mathbb{R}^d)$. Hereafter, by the symbol $\dot{H}^{s}(\mathbb{R}^d)$ we shall denote the completion of $C_c^\infty(\mathbb{R}^d)$ w.r.t.\ the norm
$$  \left\| \phi \right\|_{ \dot{H}^{s} } = \left\| (-\Delta)^{\frac s 2} (\phi) \right\|_2  \ \ \ \forall \phi \in C_c^\infty(\mathbb{R}^d) \, . $$

\begin{den}\label{defsol1}
A nonnegative function $u$ is a weak solution to problem \eqref{eq: regular} corresponding to the nonnegative initial datum $u_0 \in L^1_\rho(\mathbb{R}^d)$ if:
\begin{itemize}
\item $u\in C([0,\infty); L^1_{\rho}(\mathbb{R}^d))\cap L^\infty(\mathbb{R}^d\times(\tau,\infty))$ for all $\tau>0$;
\item $u^m\in L^2_{\rm loc}((0, \infty); \dot{H}^{s}(\mathbb{R}^d))$;
\item for any $\varphi \in C^\infty_c(\mathbb{R}^d\times (0,\infty))$ there holds
\begin{equation}\label{e2}
\int_0^\infty \int_{\mathbb{R}^d}  u(x,t) \varphi_t(x,t) \, \rho(x) \mathrm{d}x \mathrm{d}t - \int_0^\infty \int_{\mathbb{R}^d} (-\Delta)^{\frac s 2}(u^m)(x,t) (-\Delta)^{\frac s 2} (\varphi)(x,t) \, \mathrm{d}x \mathrm{d}t = 0 \, ;
\end{equation}
\item $ \lim_{t \to 0} u(t) = u_0$ in $L^1_\rho(\mathbb{R}^d)$.
\end{itemize}
\end{den}

A classical notion in the literature is the following (see e.g.\ \cite[Section 8.1]{DQRV}).
\begin{den}\label{defsol2}
Let $u$ be a weak solution to problem \eqref{eq: regular} (according to Definition \ref{defsol1}). We say that $u$ is a \emph{strong solution} if, in addition, $u_t\in
L^\infty((\tau,\infty); L^1_\rho(\mathbb{R}^d))$ for every $\tau>0$.
\end{den}

Existence and uniqueness of weak solutions to problem \eqref{eq: regular}, by means of standard techniques (see e.g.\ \cite{DQRV2,DQRV,GMPo,PT1}), are discussed in Appendix \ref{rd}. The first result we provide reads as follows (for a sketch of proof see again Appendix \ref{rd} -- Parts I and II).
\begin{pro}\label{prop2c}
Let $\rho\in L^\infty_{\rm loc}({\mathbb R}^d)$ be positive and such that $\rho^{-1}\in L^\infty_{\rm loc}({\mathbb R}^d)$. Then there exists a \emph{unique} weak solution $u$ to problem \eqref{eq: regular}, in the sense of Definition \ref{defsol1}, which is also a strong solution in the sense of Definition \ref{defsol2}.
\end{pro}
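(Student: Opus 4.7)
The plan is to follow the nonlinear-semigroup strategy of \cite{GMPo} (who handled the local case $s=1$) combined with the fractional techniques developed in \cite{DQRV2,DQRV}. The natural ambient space is $L^1_\rho(\mathbb{R}^d)$, on which the evolution \eqref{eq: regular} is formally governed by the operator $\mathcal{A}(u) := \rho^{-1}(-\Delta)^s(u^m)$. The key structural facts are that $\mathcal{A}$ is positively $m$-homogeneous and that the bilinear form $\int (-\Delta)^{s/2}\phi\,(-\Delta)^{s/2}\psi\,\mathrm{d}x$ is a Dirichlet form with respect to Lebesgue measure; the weight $\rho$ enters only through the change of measure, so the standard accretivity machinery can be applied with moderate adaptations.

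I would first establish existence starting from bounded, compactly supported nonnegative data $u_0$. The simplest route is implicit time discretization: at each step solve $\rho\,(u_{n+1}-u_n)/\tau + (-\Delta)^s(u_{n+1}^m) = 0$ by minimizing a convex functional on $L^{m+1}_\rho \cap \dot{H}^s(\mathbb{R}^d)$. Standard arguments yield an $L^1_\rho$-contraction/comparison principle, uniform $L^\infty$ bounds via $T$-accretivity and comparison with a bounded supersolution, and a discrete energy inequality. Interpolating and sending $\tau\to 0$ produces a weak solution satisfying the continuous energy identity
\[
\frac{1}{m+1}\frac{\mathrm{d}}{\mathrm{d}t}\int_{\mathbb{R}^d} \rho\, u^{m+1}\,\mathrm{d}x + \bigl\|(-\Delta)^{s/2}(u^m)\bigr\|_2^2 = 0 \, ,
\]
which in particular gives the required integrability $u^m\in L^2_{\mathrm{loc}}((0,\infty);\dot{H}^s(\mathbb{R}^d))$. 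The B\'enilan--Crandall homogeneity estimate, a direct consequence of the $m$-homogeneity and accretivity of $\mathcal{A}$ on $L^1_\rho$, then provides
\[
\|u_t(t)\|_{1,\rho}\ \le\ \frac{\|u_0\|_{1,\rho}}{(m-1)\,t}\qquad \forall t>0 \, ,
\]
so that $u$ is automatically a strong solution in the sense of Definition \ref{defsol2}. The extension to general $u_0\in L^1_\rho(\mathbb{R}^d)$ follows by passing the $L^1_\rho$-contraction and the above bounds to the limit along a sequence of bounded, compactly supported approximations.

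Uniqueness is then essentially the $L^1_\rho$-contraction statement. Given two weak solutions $u,v$, one tests the difference of their equations against a regularized $\operatorname{sign}(u^m-v^m)$ and exploits the fractional Kato inequality
\[
\int_{\mathbb{R}^d} (-\Delta)^s\phi\ \operatorname{sign}_\varepsilon(\phi)\,\mathrm{d}x\ \ge\ 0 \qquad \forall \phi\in \dot{H}^s(\mathbb{R}^d) \, ,
\]
together with the monotonicity $(u^m-v^m)\operatorname{sign}(u-v)\ge 0$, to conclude $(\mathrm{d}/\mathrm{d}t)\|u-v\|_{1,\rho}\le 0$; the common initial condition then forces $u\equiv v$. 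The main obstacle is precisely the rigorous justification of this testing procedure: since $u^m$ only lies in $\dot{H}^s(\mathbb{R}^d)$, $\operatorname{sign}(u^m-v^m)$ cannot be used as a test function directly, and the $\varepsilon$-regularization has to be controlled through the Stroock--Varopoulos inequality (or, equivalently, via the Caffarelli--Silvestre extension), which is the genuinely nonlocal hurdle compared with the local setting treated in \cite{GMPo}.
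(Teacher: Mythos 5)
Your existence sketch is essentially the paper's route: Part~I of Appendix~\ref{rd} argues ``as in the first construction'' of de~Pablo--Quir\'os--Rodr\'iguez--V\'azquez (accretivity in $L^1_\rho$, time discretisation, equivalently the half-ball approximations \eqref{02019} of Part~II), and the energy identity \eqref{e3} together with the B\'enilan--Crandall bound \eqref{eq: stima-strong} are exactly the ingredients the paper uses to upgrade the weak solution to a strong one and then pass to general $u_0\in L^1_\rho(\mathbb{R}^d)$.

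The gap is in the uniqueness step. You propose to test the equation for $u-v$ against a regularised $\operatorname{sign}(u^m-v^m)$ and to conclude $\tfrac{\mathrm d}{\mathrm dt}\|u-v\|_{1,\rho}\le 0$. That is a contraction argument for solutions whose time derivative is already known to be a locally integrable function, not for the weak solutions of Definition~\ref{defsol1}, in which $u_t$ is a priori only a distribution. Indeed, in the weak formulation \eqref{e2} the time derivative sits on the test function: choosing $\varphi=p_\varepsilon(u^m-v^m)\,\eta(t)$ makes $\varphi_t$ contain $u_t$ and $v_t$ themselves, so the pairing cannot be justified within the class of admissible test functions; the obstruction goes well beyond the Stroock--Varopoulos step you single out. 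The paper sidesteps this by using the Ole\u{\i}nik duality test function $\varphi(x,t)=\int_t^T[u^m(x,\tau)-v^m(x,\tau)]\,\mathrm d\tau$ of Proposition~\ref{Ol}, whose time derivative is simply $-(u^m-v^m)$ and, under the hypotheses \eqref{e2u-req1}--\eqref{e2u}, lies in $L^2_{\mathrm{loc}}([0,\infty);\dot H^s(\mathbb{R}^d))$, hence is admissible by density and requires no control on $u_t,v_t$. A final time shift $t\mapsto t+\varepsilon$ places an arbitrary weak solution under the hypotheses of that proposition, and one lets $\varepsilon\to 0$ via $u\in C([0,\infty);L^1_\rho(\mathbb{R}^d))$. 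To repair your argument you would first have to establish that every weak solution in the sense of Definition~\ref{defsol1} is in fact strong --- which is a priori unknown for a competing solution $v$ --- or simply replace the sign-contraction by the Ole\u{\i}nik test.
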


Let us introduce the Riesz kernel of the $s$-Laplacian:
\begin{equation}\label{eq: def-riesz-pot}
I_{2s}(x) = \frac {k_{s,d}}{|x|^{d-2s}} \ \ \ \forall x \in \mathbb{R}^d \setminus \{0\}  \, ,
\end{equation}
where $k_{s,d}$ is a suitable positive constant that depends only on $s$ and $d$. Recall that for a sufficiently regular function $f$ there holds
$$ (-\Delta)^s \left(I_{2s} \ast f \right) = f \, , $$
namely the convolution against $I_{2s}$ represents the operator $(-\Delta)^{-s}$.

\subsection{Rapidly decaying densities}
Given a weak solution $u$ to \eqref{eq: regular} and any fixed $t_0 > 0$, let us set
\[
U(t_0;x,t)=\int_{t_0}^{t} u^m(x,\tau) \,
\mathrm{d}\tau \quad \forall(x,t) \in \mathbb{R}^d \times
(t_0,\infty) \, .
\]
When $\rho(x)$ is a density that decays \emph{sufficiently fast} as $|x|\to\infty$, we shall often need to deal with solutions to \eqref{eq: regular} which are meant in a more general sense with respect to the one of Definition \ref{defsol1}, namely what we call \emph{local strong solutions}. The corresponding definition is technical, and we leave it to Appendix \ref{rd} (see Definition \ref{defsol1u} below). The result we present here concerns existence and uniqueness of local strong solutions.
\begin{thm}\label{prthm6}
Let $\rho\in L^\infty({\mathbb R}^d)$ be positive and such that $\rho^{-1}\in L^\infty_{\rm loc}({\mathbb R}^d)$. Let $ u_0\in L^1_\rho(\mathbb R^d)$ be nonnegative. Assume in addition that $\rho(x)\leq C_0 \, |x|^{-\gamma}$ a.e.\ in $B_1^c$ for some $\gamma>2s$ and $C_0>0$. Then the weak solution to problem \eqref{eq: regular} provided by Proposition \ref{prop2c} is the \emph{minimal solution}
in the class of local strong solutions (according to Definition \ref{defsol1u} below) and satisfies
\begin{equation}\label{e10b}
U(t_0;x,t) \to 0 \quad \textrm{as} \ |x| \to \infty
\end{equation}
for any {\em fixed} $t_0>0$ and for all $t > t_0 $. More precisely, there holds
\begin{equation}\label{e10a-decay}
U(t_0;x,t) \leq C \, (I_{2s}\ast \rho)(x) \quad \textrm{for a.e. } (x,t) \in \mathbb{R}^d \times (t_0,\infty)
\end{equation}
for some $C>0$, whence \eqref{e10b} follows by Lemma \ref{l100} below. Furthermore:
\begin{enumerate}[(i)]
\item \label{prthm6-case-1} under the more restrictive assumption
that $d>4s$ and $\gamma\in (2s, d-2s] \cap(4s,\infty)$, the
solution is \emph{unique} in the class of local strong solutions
satisfying
\begin{equation}\label{e10c}
u^m \in L^1_{(1+|x|)^{-d+2s}}(\mathbb{R}^d\times (t_0,T)) \ \ \
\forall T>t_0>0 \, ;
\end{equation}
\item \label{prthm6-case-2} if $u_0$ is also bounded, then $u\in
L^\infty(\mathbb R^d\times (0,\infty))$ and all the above
results hold true with $t_0=0$ as well.
\end{enumerate}
\end{thm}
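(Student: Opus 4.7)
The plan has four ingredients: construct $u$ by monotone approximation on balls (this immediately yields minimality), deduce \eqref{e10a-decay} by inverting $(-\Delta)^s$ on the time-integrated equation, treat part \ref{prthm6-case-1} via a Riesz-potential representation, and obtain part \ref{prthm6-case-2} by a comparison argument.

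For construction and minimality, I would solve the Dirichlet problem on $B_R$ with initial datum $u_0\chi_{B_R}$ and zero exterior condition (i.e.\ $u^m_R\equiv 0$ on $B_R^c$); the resulting approximating solutions $u_R$ are monotone nondecreasing in $R$ by the comparison principle available in that setting. The limit $u:=\lim_R u_R$ inherits the regularity prescribed in Definition \ref{defsol1} by stability arguments in the spirit of \cite{GMPo,PT1}, and coincides with the weak solution of Proposition \ref{prop2c} by uniqueness. Minimality is automatic: any local strong solution $\tilde u$ dominates $u_R$ on $B_R\times(0,\infty)$ via comparison with the Dirichlet problem on $B_R$ (the exterior restriction $\tilde u^m\ge 0$ being trivial), whence $\tilde u\ge u$ upon letting $R\to\infty$.

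To prove \eqref{e10a-decay}, I rely on the $L^\infty$ smoothing built into Definition \ref{defsol1}: $u(t_0)\in L^\infty(\mathbb R^d)$ for every $t_0>0$. Integrating the identity $\rho u_t+(-\Delta)^s(u^m)=0$ from $t_0$ to $t$ and exchanging $(-\Delta)^s$ with the time integral (justified by $u^m\in L^2_{\rm loc}((0,\infty);\dot H^s(\mathbb R^d))$) gives
\[
(-\Delta)^s U(t_0;x,t) \;=\; \rho(x)\bigl[u(x,t_0)-u(x,t)\bigr] \;\le\; \|u(t_0)\|_\infty\,\rho(x)
\]
for a.e.\ $x\in\mathbb R^d$ and all $t>t_0$. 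Since $\rho\in L^\infty(\mathbb R^d)$ together with $\rho(x)\le C_0|x|^{-\gamma}$ for $|x|>1$ and $\gamma>2s$ makes $I_{2s}\ast\rho$ finite everywhere, Riesz inversion (and the nonnegativity of $U$) yields $U(t_0;x,t)\le \|u(t_0)\|_\infty\,(I_{2s}\ast\rho)(x)$, and \eqref{e10b} then follows from Lemma \ref{l100}.

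The hard part is uniqueness \ref{prthm6-case-1}. Let $\bar u$ be any local strong solution in the class \eqref{e10c} with initial datum $u_0$, and set $\overline U(t_0;x,t):=\int_{t_0}^t\bar u^m(x,\tau)\,\mathrm d\tau$. Minimality gives $u\le \bar u$, whence $U\le\overline U$. Subtracting the integrated equations produces
\[
(-\Delta)^s\bigl(\overline U-U\bigr)(t_0;x,t) \;=\; \rho(x)\bigl[(\bar u-u)(x,t_0)-(\bar u-u)(x,t)\bigr] \, .
\]
The restrictions $d>4s$ and $\gamma\in(2s,d-2s]\cap(4s,\infty)$ are tailored precisely so that both $\overline U$ and $U$ lie in the natural domain on which $(-\Delta)^s$ can be inverted by $I_{2s}$: the weight $(1+|x|)^{-d+2s}$ appearing in \eqref{e10c} is exactly the decay rate of $I_{2s}$ at infinity, while $\gamma>4s$ ensures $\rho\cdot(I_{2s}\ast\rho)\in L^1(\mathbb R^d)$ and makes the convolution $I_{2s}\ast[\rho(\bar u-u)]$ absolutely convergent and vanishing at infinity. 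Hence one can represent
\[
\bigl(\overline U-U\bigr)(t_0;x,t) \;=\; I_{2s}\ast\bigl[\rho\bigl((\bar u-u)(t_0)-(\bar u-u)(t)\bigr)\bigr](x) \, .
\]
Letting $t_0\to 0^+$ so that $\bar u(t_0),u(t_0)\to u_0$ in $L^1_\rho$, and using $\bar u(t)\ge u(t)$ together with $I_{2s}\ge 0$, one obtains $\overline U-U\le 0$; combined with the reverse inequality this forces $\overline U\equiv U$, whence $\bar u=u$ a.e.\ after differentiating in $t$. Finally, for part \ref{prthm6-case-2}, if $u_0\in L^\infty$ then comparison of each $u_R$ with the constant supersolution $\|u_0\|_\infty$ on $B_R\times(0,\infty)$ passes to the limit, giving $u\in L^\infty(\mathbb R^d\times(0,\infty))$; this bound lets one repeat the argument leading to \eqref{e10a-decay} with $t_0=0$ and $\|u(t_0)\|_\infty$ replaced by $\|u_0\|_\infty$.
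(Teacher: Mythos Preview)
Your treatment of minimality and of the decay estimate \eqref{e10a-decay} is essentially in line with the paper (the paper defers to \cite[Theorem 5.5]{PT1}, which indeed proceeds by integrating the equation and comparing against Green potentials on the approximating domains). One caveat: your ``Riesz inversion'' step for \eqref{e10a-decay} tacitly assumes that a nonnegative function $U$ with $(-\Delta)^s U\le c\rho$ is automatically dominated by $c\,(I_{2s}\ast\rho)$. This is not a pure maximum-principle statement---it needs either a Liouville theorem or, more simply, to be carried out at the level of the approximants $u_R$ (where Dirichlet conditions kill the $s$-harmonic defect) before passing to the limit. That is precisely how the paper and \cite{PT1} proceed.

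The genuine gap is in your uniqueness argument \ref{prthm6-case-1}. Two issues:

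\textbf{(a) The Riesz representation for $\overline U-U$ is not justified.} You assert that the hypothesis \eqref{e10c} places $\overline U$ in ``the natural domain on which $(-\Delta)^s$ can be inverted by $I_{2s}$'', but this is exactly what requires proof: you would need a Liouville theorem to the effect that an $s$-harmonic function in $L^1_{(1+|x|)^{-d+2s}}(\mathbb R^d)$ vanishes. Such a statement is plausible but is not supplied, and the paper does not rely on it.

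\textbf{(b) The limit $t_0\to 0^+$ fails as written.} You need $I_{2s}\ast[\rho(\bar u-u)(t_0)]\to 0$, but the convergence $(\bar u-u)(t_0)\to 0$ is only in $L^1_\rho$, and the Riesz kernel is singular at the origin; $L^1$ convergence of the density does \emph{not} give pointwise convergence of the potential. For general $u_0\in L^1_\rho$ there is no uniform $L^\infty$ control on $\bar u(t_0)$ as $t_0\downarrow 0$ (Definition \ref{defsol1u} only gives boundedness on $(\tau,\infty)$), so the usual splitting near the singularity cannot be closed. One could salvage this by testing against $\phi\in C^\infty_c$ and using that $I_{2s}\ast\phi$ is bounded, but you do not do so, and even then point (a) remains.

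The paper avoids both difficulties by a dual argument: it tests the very weak formulation \eqref{e60} with $\varphi=h(x)\xi_R(x)\eta_T(t)$ where $h=I_{2s}\ast f$ for $f\in C^\infty_c$, $f\ge 0$. The sign $\eta_T'\le 0$ combined with minimality $\underline u\le u$ discards the time-boundary term, and Lemma~A.4 (a quantitative bound on $h(-\Delta)^s\xi_R$ and on the bilinear commutator $\mathcal B(h,\xi_R)$) kills the cutoff errors as $R\to\infty$, \emph{precisely} under the integrability \eqref{e10c}. This yields $u=\underline u$ first for bounded $u_0$ and bounded competitors; the unbounded case is then reduced to the bounded one by restarting at $t_0>0$ and using the $L^1_\rho$ contraction \eqref{e23b}, which only needs $L^1_\rho$ convergence as $t_0\to 0$ and has no kernel singularity to contend with.
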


For the proof of Theorem \ref{prthm6}, we refer the reader to Appendix \ref{rd} -- Part III.
\begin{oss}
\rm Note that, as concerns uniqueness, for $d\geq 6s$ the
assumptions on $\gamma$ amount to $\gamma>2s$.
\end{oss}

\subsection{Slowly decaying densities}\label{bs}
In this subsection we deal with weights $\rho(x)$ which decay slowly as $ |x|\to\infty $. More precisely, we shall assume that the following hypotheses are satisfied:
\begin{equation}\label{eq: ass-rho-slow-bis}
c |x|^{- \gamma_0 } \le \rho(x) \le C |x|^{- \gamma_0 } \ \
\textrm{a.e.\ in } B_1 \quad \textrm{and} \quad  c |x|^{-\gamma}
\le \rho(x) \le C |x|^{-\gamma} \ \ \textrm{a.e.\ in } B_1^c
\end{equation}
for some $ \gamma \in [0,2s)$, $ \gamma_0 \in [0,\gamma] $ and $0< c
< C $. Note that $\rho(x)$ might possibly be unbounded as $x \to 0$.

Below we recall the definition of weak solution to the more general problem \eqref{eq: measure} given in \cite[Definition 3.1]{GMP2}. Before doing it, following the same notation as in \cite{Pierre}, we need to introduce some notions of convergence in measure spaces. Let $\mathcal{M}(\mathbb{R}^d)$ be the cone of positive, finite measures on $\mathbb{R}^d$. A sequence $\{ \mu_n \} \subset \mathcal{M}(\mathbb{R}^d) $ is said to converge to $ \mu \in \mathcal{M}(\mathbb{R}^d) $ in $ \sigma(\mathcal{M}(\mathbb{R}^d),$ $C_b(\mathbb{R}^d)) $ if
\[
\lim_{n \to \infty} \int_{\mathbb{R}^d} \phi(x) \, \mathrm{d}\mu_n = \int_{\mathbb{R}^d} \phi(x) \, \mathrm{d}\mu   \ \ \ \forall \phi \in C_b(\mathbb{R}^d) \, ,
\]
where $ C_b(\mathbb{R}^d) $ is the space of continuous, bounded functions in $ \mathbb{R}^d$. Analogous definitions hold for $ \sigma(\mathcal{M}(\mathbb{R}^d),C_c(\mathbb{R}^d))$ and $ \sigma(\mathcal{M}(\mathbb{R}^d),C_0(\mathbb{R}^d))$, where $ C_0(\mathbb{R}^d) $ is the closure of $ C_c(\mathbb{R}^d) $ w.r.t.\ the $ L^\infty(\mathbb{R}^d) $ norm.

\begin{den}\label{defsol2-barenblatt}
By a weak solution to problem \eqref{eq: measure}, corresponding to the initial datum $ \mu \in \mathcal{M}(\mathbb{R}^d)$, we mean a nonnegative function $ u $ such that:
\begin{equation}\label{e23}
u \in L^\infty( (0,\infty); L^1_{\rho}(\mathbb{R}^d)) \cap
L^\infty( \mathbb{R}^d \times (\tau , \infty ) ) \quad \forall \tau>0  \, ,
\end{equation}
\begin{equation}\label{e24}
u^m \in L^2_{\rm loc}((0,\infty);\dot{H}^s(\mathbb{R}^d)) \, ,
\end{equation}
\begin{gather}\label{e25}
\int_{0}^{\infty} \! \int_{\mathbb{R}^d}  u(x,t) \varphi_t(x,t) \, \rho(x) \mathrm{d}x \mathrm{d}t - \int_0^\infty \! \int_{\mathbb{R}^d} (-\Delta)^{\frac{s}{2}} (u^m)(x,t) \, (-\Delta)^{\frac{s}{2}}(\varphi) (x,t) \,  \mathrm{d}x \mathrm{d}t = 0 \\
\forall \varphi \in  C^\infty_c( \mathbb{R}^d \times (0,\infty)) \nonumber
\end{gather}
and
\[
\lim_{t \to 0} \rho \, u(t)  = \mu \ \ \ \textrm{in }  \sigma(\mathcal{M}(\mathbb{R}^d),C_b(\mathbb{R}^d)) \, .
\]
\end{den}

It is plain that, when $\mu = \rho \,
u_0 \in L^1(\mathbb{R}^d) $, a solution to \eqref{eq: regular}
with respect to Definition \ref{defsol1} is also a solution to
\eqref{eq: measure} with respect to Definition
\ref{defsol2-barenblatt}. However, Definition
\ref{defsol2-barenblatt} permits to handle more general
initial data (positive, finite measures). In particular, we cannot
ask $u \in C([0,\infty);L^1_\rho(\mathbb{R}^d))$.
Nevertheless, thanks to the fundamental Theorem
\ref{thm: teorema-esistenza} which we state below, when $\mu = \rho \, u_0 \in
L^1(\mathbb{R}^d) $ such two solutions do coincide (provided the
parameters $\gamma$, $s$ and $d $ meet the corresponding assumptions).

We recall now some well posedness results proved in
\cite{GMP2}. In fact, thanks to the theory developed therein, we can guarantee existence and uniqueness of weak solutions
to \eqref{eq: measure} (according to Definition \ref{defsol2-barenblatt}). Besides, Proposition 4.1 of \cite{GMP2} ensures that 
\begin{equation}\label{eq: cons-mass}
\int_{\mathbb{R}^d} u(x,t) \, \rho(x) \mathrm{d}x = \mu({\mathbb R}^d) \ \ \ \forall t>0 \, ,
\end{equation}
namely there is \emph{conservation of mass}. This is actually a
sole consequence of Definition \ref{defsol2-barenblatt} and the
hypothesis $\gamma\in[0,2s)$.

The next result is a crucial one but its proof follows along known lines.

\begin{thm}\label{thm: teorema-esistenza}
Let $d > 2s$. Assume that $\rho$ satisfies \eqref{eq: ass-rho-slow-bis} for some $ \gamma \in [0, 2s) \cap [0,d-2s]$ and $ \gamma_0 \in [0,\gamma]$. Then there exists a weak
solution $u$ to problem \eqref{eq: measure}, in the sense of
Definition \ref{defsol2-barenblatt}, which satisfies the smoothing
estimate
\begin{equation}\label{eq: smoothing-effect-general}
\left\| u(t) \right\|_\infty \le K \, t^{- \alpha } \mu(\mathbb{R}^d)^\beta   \ \ \ \forall t > 0 \, ,
\end{equation}
where $K$ is a suitable positive constant depending only on $ m $,
$\gamma$, $s$, $ d $, $C$  and
\begin{equation}\label{eq: smoothing-effect-exp-teorema}
\alpha= \frac{d-\gamma}{(m-1)(d-\gamma) + 2s-\gamma} \, , \  \ \
\beta= \frac{2s-\gamma}{(m-1)(d-\gamma) + 2s-\gamma }  \, .
\end{equation}
In particular, $u(t)\in L^1_{\rho}({\mathbb R}^d) \cap L^\infty({\mathbb R}^d)$ for
all $t>0$. Moreover, $u$ satisfies the energy estimates
\begin{equation}\label{eq: prima-esistenza-energy-1-teorema}
 \int_{t_1}^{t_2} \! \int_{\mathbb{R}^d} \left| (-\Delta)^{\frac{s}{2}} \left( u^m \right) (x,t) \right|^2  \mathrm{d}x \mathrm{d}t + \frac 1{m+1}\int_{\mathbb{R}^d} u^{m+1}(x,t_2) \, \rho(x)
 \mathrm{d}x  = \frac 1{m+1}\int_{\mathbb{R}^d} u^{m+1}(x,t_1) \, \rho(x) \mathrm{d}x
\end{equation}
and
\begin{equation}\label{eq: prima-esistenza-energy-2-teorema}
\int_{t_1}^{t_2} \! \int_{\mathbb{R}^d} \left| z_t(x,t)  \right|^2 \rho(x) \mathrm{d}x \mathrm{d}t  \le C^\prime
\end{equation}
for all $ t_2 > t_1 > 0$, where $ z=u^{\frac{m+1}{2}} $ and $C^\prime$ is a positive constant that
depends only on $m$, $ t_1 $, $ t_2 $ and on $ \int_{\mathbb{R}^d} u^{m+1}(x,t_1 /2) \, \rho(x) \mathrm{d}x $. Furthermore, such solution is \emph{unique}. 
\end{thm}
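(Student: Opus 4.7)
The plan is to construct the solution by smooth approximation, derive the smoothing and energy estimates uniformly along the approximating sequence, and then pass to the limit in the weak formulation, largely paralleling the strategy of \cite{GMP2}. First, approximate $\mu$ by measures $\mu_n = \rho\, u_{0,n}$ with $0\le u_{0,n}\in L^1_\rho(\mathbb{R}^d)\cap L^\infty(\mathbb{R}^d)$, obtained by a truncation-mollification procedure, in such a way that $\mu_n\to\mu$ in $\sigma(\mathcal{M}(\mathbb{R}^d),C_b(\mathbb{R}^d))$ with $\mu_n(\mathbb{R}^d)\to\mu(\mathbb{R}^d)$. Proposition \ref{prop2c} furnishes a unique weak-strong solution $u_n$ to \eqref{eq: regular} with datum $u_{0,n}$; for $\gamma\in[0,2s)$ the conservation-of-mass identity \eqref{eq: cons-mass} applies to $u_n$, so $\|u_n(t)\|_{1,\rho}\equiv\mu_n(\mathbb{R}^d)$.

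Next I would establish the $L^1_\rho$--$L^\infty$ smoothing bound \eqref{eq: smoothing-effect-general} uniformly in $n$. The key ingredient is the weighted Hardy-Sobolev inequality
\[
\left(\int_{\mathbb{R}^d} |\phi|^{q}\,\rho(x)\,\mathrm{d}x\right)^{1/q}\le C_\ast \left\|(-\Delta)^{s/2}\phi\right\|_2
\qquad\forall \phi\in C_c^\infty(\mathbb{R}^d),
\]
valid for the exponent $q=2(d-\gamma)/(d-2s)$ precisely under the assumption $\gamma\in[0,2s)\cap[0,d-2s]$, which is the endpoint of interpolation between the fractional Sobolev embedding $\dot H^s\hookrightarrow L^{2d/(d-2s)}$ and the fractional Hardy inequality controlling $\||x|^{-\gamma/2}\phi\|_2$. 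Testing \eqref{e25} against $u_n^{m+p-1}$ and iterating à la Moser from this inequality yields the $L^p_\rho$--$L^\infty$ bound with exponents \eqref{eq: smoothing-effect-exp-teorema} (a direct dimensional analysis of the resulting recurrence reproduces the values of $\alpha$ and $\beta$). From the weak formulation one also reads off the energy identity \eqref{eq: prima-esistenza-energy-1-teorema} at the approximate level and, by differentiating in time and exploiting the substitution $z_n=u_n^{(m+1)/2}$, the estimate \eqref{eq: prima-esistenza-energy-2-teorema} depending only on $\int_{\mathbb{R}^d} u_n^{m+1}(\cdot,t_1/2)\,\rho\,\mathrm{d}x$, itself uniformly bounded via the smoothing estimate.

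These uniform bounds give weak compactness of $\{u_n^m\}$ in $L^2_{\rm loc}((0,\infty);\dot H^s(\mathbb{R}^d))$ and of $\{(z_n)_t\}$ in $L^2_{\rm loc}((0,\infty);L^2_\rho(\mathbb{R}^d))$, and hence, by an Aubin-Lions argument in the weighted spaces, strong convergence of $u_n$ in $L^q_{\rm loc}((0,\infty);L^q_\rho(K))$ for every $q<\infty$ and every compact $K$; the $L^\infty$ smoothing upgrades this to global-in-space strong convergence after integrating the small tails controlled by $\|u_n(t)\|_{1,\rho}\le\mu(\mathbb{R}^d)$. This suffices to pass to the limit in \eqref{e25} and in the energy identities, yielding a weak solution $u$ satisfying \eqref{e23}--\eqref{e25}, \eqref{eq: smoothing-effect-general}, \eqref{eq: prima-esistenza-energy-1-teorema}, \eqref{eq: prima-esistenza-energy-2-teorema}. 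To verify the initial trace in $\sigma(\mathcal{M}(\mathbb{R}^d),C_b(\mathbb{R}^d))$, one tests the equation against $\varphi\in C_b(\mathbb{R}^d)$ cut off by a slow-decay radial weight: the energy bound \eqref{eq: prima-esistenza-energy-2-teorema} and the Riesz potential estimate $(-\Delta)^{-s}(\rho)\in L^\infty_{\rm loc}$ provide the equicontinuity in time that, together with conservation of mass (which rules out loss of mass at infinity), yields $\rho\,u(t)\to\mu$ narrowly as $t\to 0$.

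Uniqueness is the most delicate part and is proved by a duality argument in the spirit of Pierre and of \cite{GMP2}: given two solutions $u,v$ with the same datum $\mu$, one tests the weak formulation of the equation satisfied by $u-v$ against $\psi=I_{2s}\ast[\operatorname{sign}(u-v)\varphi]$ for $\varphi\in C_c^\infty(\mathbb{R}^d\times(0,\infty))$, which makes sense thanks to $\gamma\le d-2s$ ensuring $\rho\,I_{2s}\ast(\cdot)$ is locally integrable, and uses the monotonicity of $r\mapsto r^m$ to produce an $L^1_\rho$-contraction in the form $\|u(t)-v(t)\|_{1,\rho}\le\|u(\tau)-v(\tau)\|_{1,\rho}$; letting $\tau\to 0$ and invoking the narrow convergence of the initial trace closes the argument. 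The main obstacle, and the reason the statement is nontrivial despite being folklore, is the interplay between the singular weight $\rho$ (with a possibly worse singularity $|x|^{-\gamma_0}$ near the origin) and the nonlocal operator $(-\Delta)^s$ in the Moser iteration: one must carefully check that the Hardy-Sobolev inequality is uniform in the two regimes $|x|\lessgtr 1$ and that the constants in the iteration do not blow up, which is exactly where the hypothesis $\gamma\in[0,2s)\cap[0,d-2s]$ becomes sharp.
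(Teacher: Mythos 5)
The construction part of your proposal — approximating $\mu$ by bounded $L^1_\rho$ data, invoking Proposition \ref{prop2c}, mass conservation, Moser iteration from the weighted Hardy--Sobolev inequality \eqref{eq: sobo-2}, energy identities, compactness, and Riesz-potential control of the initial trace — is essentially the strategy the paper itself points to by deferring to \cite{GMP2}, and the dimensional analysis producing the exponents \eqref{eq: smoothing-effect-exp-teorema} is correct. (The remark after the theorem makes the route through \eqref{eq: sobo-2} explicit, and the Riesz-potential treatment of the trace mirrors Claims 2--5 in the proof of Proposition \ref{prorisc}.)

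However, your uniqueness argument has a genuine gap that cannot be patched as written. The $L^1_\rho$-contraction $\|u(t)-v(t)\|_{1,\rho}\le\|u(\tau)-v(\tau)\|_{1,\rho}$ for $t>\tau>0$ is available (apply the bounded-data contraction \eqref{e23b} with initial times at $\tau$), but letting $\tau\to0$ requires $\|u(\tau)-v(\tau)\|_{1,\rho}\to0$, and this is \emph{not} a consequence of narrow convergence of both $\rho\,u(\tau)$ and $\rho\,v(\tau)$ to the same measure $\mu$. Narrow convergence is a weak-$\ast$ statement; two families can converge narrowly to the same $\delta$-mass while their $L^1_\rho$-distance stays equal to $2M$. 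This is precisely the obstruction that Pierre's method was designed to circumvent. The correct argument works at the level of the Riesz potentials $U_i(t)=I_{2s}\ast\bigl(\rho\,u_i(t)\bigr)$: one shows that $\partial_t U_i=-u_i^m\le0$ (the potentials are nonincreasing in time) and that $U_i(t)\to I_{2s}\ast\mu$ in a \emph{strong} topology (e.g.\ $L^1_{\rm loc}$, or monotonically pointwise) as $t\to0$ — this last point hinging on $\gamma\le d-2s$ to give $I_{2s}\ast\mu$ the right integrability. Only after the potentials are identified at $t=0$, and combined with a monotonicity/sign argument exploiting that $r\mapsto r^m$ is increasing, does one deduce $U_1=U_2$ and hence $u_1=u_2$. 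Your proposal replaces this with a naive cut-off test function and a directly asserted $L^1$-limit, and that step fails.

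A secondary imprecision: the test functions $u_n^{m+p-1}$ in your Moser iteration are not admissible a priori (only $u_n^m\in\dot H^s$ is known), so the iteration must actually be run through the Stroock--Varopoulos inequality as in \cite[Section 8.2]{DQRV} or Appendix \ref{rd}. This is routine, unlike the uniqueness point above.
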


\begin{oss}
\rm 
\begin{enumerate}[(i)]
\item The smoothing effect \eqref{eq: smoothing-effect-general} can be proved as in \cite[Proposition 4.6]{GMP2}. In fact, such proof only relies on the validity of the fractional Sobolev inequality
\[
\left\| v \right\|_{2\frac{d-\gamma}{d-2s},\rho} \le \widetilde{C} \left\| (-\Delta)^s(v) \right\|_{2} \ \ \ \forall v \in \dot{H}^s(\mathbb{R}^d) \, ,
\]
which, thanks to the assumptions on $\rho$, is a trivial consequence of
\begin{equation}\label{eq: sobo-2}
\left\| v \right\|_{2\frac{d-\gamma}{d-2s},-\gamma} \le C_{S,\gamma} \left\| (-\Delta)^s(v) \right\|_{2} \ \ \ \forall v \in \dot{H}^s(\mathbb{R}^d) \, .
\end{equation}
For the validity of \eqref{eq: sobo-2}, we refer the reader to \cite[Lemma 4.5]{GMP2} and references quoted.

\item Thanks to the results of \cite[Section 3.1]{GMP2} (which in turn go back to \cite[Section 8.1]{DQRV}), or to the discussion in Appendix \ref{rd} -- Part I (which applies to slowly decaying densities as well), we have that the solutions provided by Theorem \ref{thm: teorema-esistenza} are also strong. In particular, they belong to $C((0,\infty);L^1_{\rho}(\mathbb{R}^d))$.

\item For $d \ge 4s $ the assumptions of Theorem \ref{thm: teorema-esistenza} on $\gamma$ amount to $\gamma \in [0,2s)$.
\end{enumerate}
\end{oss}

\section{Main results: large time behaviour of solutions}\label{mr}
In this section we state our main results for the asymptotics (as $t\to\infty$) of the solutions to problems \eqref{eq: regular} and \eqref{eq: measure} provided by Proposition \ref{prop2c} and Theorem \ref{thm: teorema-esistenza}, respectively.

\subsection{Rapidly decaying densities}
As concerns solutions to \eqref{eq: regular} when $\rho(x)$ is a density that decays sufficiently fast as $|x|\to\infty$, we have the following result.
\begin{thm}\label{thmab2}
Let $\rho\in C^{\sigma}_{\rm loc}(\mathbb R^d)$ for some
$\sigma\in(0,1)$, with $\rho>0$. Let $u_0\in L^1_{\rho}(\mathbb{R}^d)$
be nonnegative. Assume in addition that $\rho(x)\leq C_0
|x|^{-\gamma}$ in $B_1^c$ for some $\gamma>2s$ and $C_0>0$. Let
$u$ be the (minimal) weak solution to problem \eqref{eq: regular}
provided by Proposition \ref{prop2c} and $ w $ be the very weak
solution to the sublinear elliptic equation \eqref{19021}, with
$\alpha=1/ m$, provided by Theorem \ref{prthm8} below (which is
also minimal in the class of solutions specified by the
corresponding statement). Then, 
\begin{equation}\label{eq101}
\lim_{t\to\infty}t^{\frac 1{m-1}} \, u(x,t)=(m-1)^{-\frac 1{m-1}}
\, w^{\frac 1m}(x)
\end{equation}
monotonically and in $L^p_{\rm loc}(\mathbb R^d)$ for all $p\in [1,\infty)$.
\end{thm}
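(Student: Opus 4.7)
The plan is to identify the long-time limit with the \emph{separated-variables solution}
\[
u_\infty(x,t) := [(m-1)t]^{-\frac{1}{m-1}} \, w^{\frac{1}{m}}(x) \, ,
\]
which satisfies \eqref{eq: regular} identically precisely because $w$ solves \eqref{19021} with $\alpha = 1/m$. Since $t^{1/(m-1)} u_\infty \equiv (m-1)^{-1/(m-1)} w^{1/m}$, the convergence \eqref{eq101} amounts to showing that $u/u_\infty \to 1$.

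\emph{Monotonicity and universal upper bound.} The equation in \eqref{eq: regular} is invariant under the time-rescaling $u \mapsto u_\lambda(x,t) := \lambda^{1/(m-1)} u(x,\lambda t)$ (the weight $\rho$ causes no issue since no spatial rescaling is needed). For $\lambda \geq 1$, $u_\lambda$ is the minimal solution corresponding to the larger datum $\lambda^{1/(m-1)} u_0$, so comparison in the class of minimal solutions provided by Theorem \ref{prthm6} gives $u_\lambda \geq u$, i.e.\ $t \mapsto t^{1/(m-1)} u(x,t)$ is monotonically non-decreasing. To produce the universal upper bound, I compare $u$ with $u_\infty$ itself: at any fixed $t_0>0$ the smoothing estimate of Theorem \ref{prthm6} bounds $u(\cdot,t_0)$, while $u_\infty(\cdot,t_0-\tau) \to \infty$ as $\tau \to t_0^-$, so $u(\cdot,t_0) \leq u_\infty(\cdot,t_0-\tau)$ holds for $\tau$ close enough to $t_0$; comparison propagates this forward and, letting $\tau \to 0^+$, yields $u \leq u_\infty$ throughout $\mathbb R^d\times(0,\infty)$. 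The monotone pointwise limit
\[
U(x) := \lim_{t \to \infty} t^{\frac{1}{m-1}} u(x,t) \, \leq \, (m-1)^{-\frac{1}{m-1}} w^{\frac{1}{m}}(x)
\]
therefore exists and is finite everywhere.

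\emph{Passing to the limit in the PDE.} Setting $v(x,t) = t^{1/(m-1)} u(x,t)$, a direct substitution recasts \eqref{eq: regular} as
\[
\frac{(-\Delta)^s v^m}{t} \, = \, \frac{\rho \, v}{(m-1) \, t} \, - \, \rho \, v_t \, .
\]
Testing this identity against $\phi \in C_c^\infty(\mathbb R^d)$ and integrating in time over dyadic windows $[T,2T]$, the right-hand side tends (by dominated convergence, exploiting the bound $v \leq (m-1)^{-1/(m-1)} w^{1/m}$, together with the fact that the endpoint contribution $\rho(v(\cdot,2T) - v(\cdot,T))$ vanishes) to $\ln 2 \cdot \rho \, U / (m-1)$, while the left-hand side tends to $\ln 2 \cdot (-\Delta)^s U^m$ in the distributional sense. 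Hence, setting $\widetilde w := (m-1)^{m/(m-1)} U^m$, the limit $\widetilde w$ is a nonnegative very weak solution of \eqref{19021} dominated pointwise by $w$.

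\emph{Minimality, positivity and conclusion.} Since $\widetilde w \leq w$, the function $\widetilde w$ lies in the class in which, by Theorem \ref{prthm8}, $w$ is the minimal \emph{positive} solution; therefore, once non-triviality $\widetilde w \not\equiv 0$ is established, minimality forces $\widetilde w \geq w$, and we conclude $\widetilde w \equiv w$ and $U = (m-1)^{-1/(m-1)} w^{1/m}$. The step I expect to be the main obstacle is precisely the non-triviality statement $\widetilde w \not\equiv 0$, because a priori $t^{1/(m-1)} u$ might decay to $0$ along the way. I will handle it by constructing a positive subsolution of \eqref{eq: regular} of separated-variables form $\underline u(x,t) = [(m-1)(t+t_0)]^{-1/(m-1)} \underline w^{1/m}(x)$, where $\underline w$ is a suitably scaled-down positive subsolution of \eqref{19021} (obtained from $w$ itself); applying the comparison principle for the minimal solution $u$ on $[t_1, \infty)$ for a sufficiently large $t_1$, where $u(\cdot,t_1)$ is strictly positive by the positivity-propagation effect of the fractional Laplacian, gives $U \geq (m-1)^{-1/(m-1)} \underline w^{1/m} > 0$. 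Once $U = (m-1)^{-1/(m-1)} w^{1/m}$ is secured, the monotone pointwise convergence together with the domination by $(m-1)^{-1/(m-1)} w^{1/m} \in L^\infty_{\rm loc}(\mathbb R^d)$ upgrades, via dominated convergence, to the claimed $L^p_{\rm loc}(\mathbb R^d)$ convergence for every $p \in [1,\infty)$.
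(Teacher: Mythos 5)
Your overall strategy — establish $t^{1/(m-1)}u \leq (m-1)^{-1/(m-1)}w^{1/m}$, show $t^{1/(m-1)}u$ is non-decreasing, pass to the limit in the PDE to identify the monotone pointwise limit with a solution of \eqref{19021}, invoke minimality — is the same route the paper takes. Two of your key steps, however, have genuine gaps, both traceable to the decay of $w$ at infinity.

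\emph{Gap 1: the upper bound.} You compare $u(\cdot,t_0)$ with $u_\infty(\cdot,t_0-\tau)$ and argue that since $u(\cdot,t_0)$ is bounded and $u_\infty(\cdot,t_0-\tau)$ blows up as $\tau\to t_0^-$, the comparison $u(\cdot,t_0)\leq u_\infty(\cdot,t_0-\tau)$ can be arranged for $\tau$ near $t_0$. But $u_\infty(\cdot,t_0-\tau)$ blows up only \emph{pointwise}, not uniformly, because $w^{1/m}(x)\to 0$ as $|x|\to\infty$ (indeed $w(x)\lesssim (1+|x|)^{-\kappa}$ by \eqref{e59} and Lemma \ref{l100}). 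To find a single $\tau$ that works on all of $\mathbb R^d$ you would need $\sup_x u(x,t_0)/w^{1/m}(x)<\infty$, i.e.\ that $u(\cdot,t_0)$ decays at least as fast as $w^{1/m}$ at infinity; nothing in Theorem \ref{prthm6} or the smoothing estimate gives you that. The paper sidesteps exactly this obstruction by proving the bound on bounded domains first (Lemma \ref{lemab}): one compares the approximate solution $u_R$ of \eqref{e70} with $C_m(t+\tau_R)^{-1/(m-1)}w_{R+1}^{1/m}$ on $\Omega_R$, where $w_{R+1}$ is bounded away from zero on $\overline B_R$, so $\tau_R$ can be chosen; letting $R\to\infty$ then delivers the global bound. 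This localization is essential, not a technicality.

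\emph{Gap 2: the minimality step.} You show that $\widetilde w$ is a \emph{very weak} solution of \eqref{19021} in the sense of Definition \ref{defsol01-very-weak}, and then invoke the minimality of $w$ guaranteed by Theorem \ref{prthm8}. But Theorem \ref{prthm8} asserts minimality of $w$ in the class of nonidentically zero \emph{local weak} solutions in the extension sense (Definition \ref{defsol01}), not in the class of very weak solutions. To apply it you must also show that $\widetilde w$ admits a local extension $\widetilde g\in X^s_{\rm loc}\cap L^\infty_{\rm loc}$ solving the extension problem; this is why the paper works with the rescaled equation both in the very weak form \eqref{e81} and in the extended form \eqref{e81b}, derives the energy estimate \eqref{e30c-energia-lic} through Caccioppoli-type testing, and only then identifies the limit $g$ as a local weak solution before using minimality. (The very weak formulation is recovered afterward as a \emph{consequence}, which is also how the paper finishes the proof of Theorem \ref{prthm8}.)

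\emph{A minor point on non-triviality.} You flag $\widetilde w\not\equiv 0$ as the main obstacle and propose a separate subsolution comparison, which would run into the same decay-at-infinity difficulty as Gap 1. But if $u_0\not\equiv 0$, non-triviality is automatic once monotonicity is known: $t^{1/(m-1)}u(x,t)$ is non-decreasing, so $U(x)\geq t_0^{1/(m-1)}u(x,t_0)$ for any $t_0>0$, and the right-hand side is not identically zero. No additional argument is needed -- the paper gets this for free from the Bénilan--Crandall monotonicity \eqref{e79}.
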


\begin{oss}\label{ossBSV} \rm
Note that, if we introduce the {\it relative error} $\left|{u}/{\mathcal{U}}-1\right|$, where 
$$
\mathcal{U}(x,t)=(m-1)^{-\frac{1}{m-1}} \, t^{-\frac{1}{m-1}}w^{\frac{1}{m}}(x) $$
(we shall prove below that $w$ is strictly positive), then Theorem \ref{thmab2} implies that
\[
\left| \frac{u(x,t)}{\mathcal{U}(x,t)}-1 \right| \to 0\quad \textrm{as } t \to \infty \, ,
\]
at least locally. When $\rho \equiv 1$ and the problem is posed in a bounded domain, in \cite{BSV} the authors investigate the rate of convergence, uniformly in space, of such relative error.
\end{oss}

\begin{oss}
\rm Under the same assumptions as in Theorem \ref{thmab2}, with in
addition $d>4s$ and $\gamma>4s$, thanks to the uniqueness results
of Theorem \ref{prthm6} and Theorem \ref{prthm8}, we can
read the above asymptotic result as follows: any nontrivial local
strong solution $u$ to \eqref{eq: regular} satisfying $u^m\in
L^1_{(1+|x|)^{-d+2s}}(\mathbb R^d\times(t_0, T))$ for all
$T>t_0>0$ converges, in the sense of \eqref{eq101}, to the unique
nontrivial local weak and very weak solution $w$ to \eqref{19021}
(with $\alpha=1/m$) satisfying $w\in L^1_{(1+|x|)^{-d+2s}}(\mathbb
R^d)$.
\end{oss}

\subsection{Slowly decaying densities}\label{sect: sect-bar}
In the analysis of the long time behaviour of solutions to \eqref{eq: measure} when $\rho(x)$ is
density that decays \emph{slowly} as $|x|\to\infty$, a major role is played by the solution to the same
problem in the particular case $\rho(x)=c_\infty |x|^{-\gamma}$ and $\mu= M \delta$,
 for given positive constants $c_\infty$ and $M$ (namely, the solution to \eqref{e10}). From now on we shall denote such solution as $u^{c_\infty}_M$.

Let us define the positive parameters $\alpha$ and $\kappa$ as follows:
\begin{equation}\label{alpha-kappa}
\alpha=(d-\gamma) \kappa \, , \quad
\kappa=\frac1{(m-1)(d-\gamma)+2s-\gamma} \, .
\end{equation}
Notice that $\alpha$ is the same parameter appearing in \eqref{eq: smoothing-effect-exp-teorema}. It is immediate to check that, for any given $\lambda>0$, the function
\[
u^{c_\infty}_{M,\lambda}(x,t)=\lambda^\alpha u^{c_\infty}_M(\lambda^\kappa x,\lambda t)
\]
is still a solution to problem \eqref{e10}. Hence, as a consequence of the uniqueness result contained in Theorem \ref{thm: teorema-esistenza}, $u^{c_\infty}_{M,\lambda}$ and $ u^{c_\infty}_{M} $ must necessarily coincide, that is
\begin{equation}\label{eq: barenblatt-rescaled-identity}
u^{c_\infty}_{M}(x,t)=\lambda^\alpha u^{c_\infty}_M(\lambda^\kappa x,\lambda t) \ \ \ \forall t,\lambda>0 \, , \ \textrm{for a.e.\ } x \in \mathbb{R}^d \, .
\end{equation}
As already mentioned, the special solution $u^{c_\infty}_M$, thanks to the self-similarity identity \eqref{eq: barenblatt-rescaled-identity} it satisfies, will be crucial in the study of the asymptotic behaviour of \emph{any} solution to \eqref{eq: measure} (provided $\rho$ complies with \eqref{eq: ass-rho-slow-lim} as well). This is thoroughly analysed in Section \ref{as1}.

\smallskip

Our main result concerning the asymptotics of solutions to \eqref{eq: measure} is the following.
\begin{thm}\label{thmab1}
Let $d > 2s$. Assume that $\rho$ satisfies \eqref{eq: ass-rho-slow-bis} for some $ \gamma \in [0, 2s) \cap [0,d-2s]$ and $ \gamma_0 \in [0,\gamma]$, with in addition
\begin{equation}\label{eq: ass-rho-slow-lim}
\lim_{|x|\to\infty}\rho(x)|x|^\gamma=c_\infty>0 \, .
\end{equation}
Let $u$ be the unique weak solution to problem \eqref{eq: measure}, in the sense of Definition \ref{defsol2-barenblatt}, provided by Theorem \ref{thm: teorema-esistenza} and corresponding to $\mu \in \mathcal{M}(\mathbb{R}^d)$ as initial datum, with $\mu({\mathbb R}^d)=M>0$. Then,
\begin{equation}\label{e104}
\lim_{t \to \infty} \left\|u(t)-u^{c_\infty}_M(t)
\right\|_{1,|x|^{-\gamma}}=0
\end{equation}
or equivalently
\begin{equation}\label{e104-equiv}
\lim_{t \to \infty} \int_{\mathbb{R}^d} \left| t^\alpha u(t^\kappa
x,t) - u^{c_\infty}_M(x,1) \right| |x|^{-\gamma} \,\mathrm{d}x =0
\, ,
\end{equation}
where $u^{c_\infty}_M$ is the Barenblatt solution defined as the unique solution to problem \eqref{e10}, and the parameters $\alpha$, $\kappa$ are as in \eqref{alpha-kappa}.
\end{thm}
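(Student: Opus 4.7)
My plan is to follow the standard self-similar rescaling approach used in \cite{FK, Vaz03, RV08, VazConstr} and adapted to the present weighted nonlocal setting. For $\lambda>0$ define
\[
u_\lambda(x,t) := \lambda^\alpha u(\lambda^\kappa x,\lambda t) , \qquad \rho_\lambda(x) := \lambda^{\gamma\kappa}\rho(\lambda^\kappa x) ,
\]
with $\alpha,\kappa$ as in \eqref{alpha-kappa}. A direct computation shows that the exponents are precisely chosen so that $u_\lambda$ is a weak solution of $\rho_\lambda (u_\lambda)_t + (-\Delta)^s(u_\lambda^m)=0$, with rescaled initial measure $\mu_\lambda$ defined by $\int \phi\, d\mu_\lambda = \int \phi(\lambda^{-\kappa}y)\,d\mu(y)$. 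Because the Barenblatt solution $u^{c_\infty}_M$ of problem \eqref{e10} satisfies the self-similarity identity \eqref{eq: barenblatt-rescaled-identity}, the desired statement \eqref{e104-equiv} is equivalent to proving
\[
u_\lambda(\cdot,1) \;\longrightarrow\; u^{c_\infty}_M(\cdot,1) \qquad \text{in } L^1_{|x|^{-\gamma}}(\mathbb{R}^d) \quad \text{as } \lambda\to\infty .
\]

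\textbf{Uniform estimates and compactness.} Two convergences feed into the limit: first, by \eqref{eq: ass-rho-slow-lim} together with \eqref{eq: ass-rho-slow-bis}, $\rho_\lambda\to c_\infty|x|^{-\gamma}$ pointwise a.e.\ and locally uniformly on $\mathbb{R}^d\setminus\{0\}$; second, $\mu_\lambda\to M\delta$ in $\sigma(\mathcal{M}(\mathbb{R}^d),C_b(\mathbb{R}^d))$ by continuity of $\phi$ at $0$ and $\mu(\mathbb{R}^d)=M$. The rescaled $u_\lambda$ inherit, uniformly in $\lambda$, the mass identity $\int u_\lambda(x,t)\rho_\lambda(x)\,dx = M$ from \eqref{eq: cons-mass}, the smoothing bound $\|u_\lambda(t)\|_\infty\le K t^{-\alpha}M^\beta$ from \eqref{eq: smoothing-effect-general}, the energy bound on $u_\lambda^m$ in $L^2_{\rm loc}((0,\infty);\dot H^s(\mathbb{R}^d))$ from \eqref{eq: prima-esistenza-energy-1-teorema}, and the time-derivative bound on $z_\lambda=u_\lambda^{(m+1)/2}$ from \eqref{eq: prima-esistenza-energy-2-teorema}. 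Combining these via an Aubin--Lions type argument (as in \cite{GMP2}) extracts a subsequence with $u_{\lambda_n}\to U$ strongly in $L^p_{\rm loc}(\mathbb{R}^d\times(0,\infty))$ for every $p<\infty$, a.e., and with $u_{\lambda_n}^m \rightharpoonup U^m$ weakly in $L^2_{\rm loc}((0,\infty);\dot H^s)$.

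\textbf{Identification of the limit.} Passing to the limit in the weak formulation of the rescaled equation, using $\rho_{\lambda_n}\to c_\infty|x|^{-\gamma}$ and the strong/weak convergences above, yields that $U$ solves \eqref{e25} with weight $c_\infty|x|^{-\gamma}$. A separate argument, based on mass conservation for the approximations together with $\mu_\lambda\to M\delta$, shows that $c_\infty|x|^{-\gamma}U(t)\to M\delta$ as $t\to 0^+$ in $\sigma(\mathcal{M},C_b)$. Hence $U$ is a weak solution to \eqref{e10} in the sense of Definition \ref{defsol2-barenblatt}, and the uniqueness part of Theorem \ref{thm: teorema-esistenza} forces $U=u^{c_\infty}_M$. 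Since the limit does not depend on the subsequence, the full family $u_\lambda(\cdot,1)$ converges to $u^{c_\infty}_M(\cdot,1)$, at least locally and a.e.

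\textbf{Main obstacle: upgrading to $L^1_{|x|^{-\gamma}}$.} The delicate step, and the one where (as remarked in the introduction) the usual $L^\infty$ regularity-based arguments of \cite{RV08, VazConstr} break down, is turning local convergence into convergence in $L^1_{|x|^{-\gamma}}(\mathbb{R}^d)$. The plan here is to combine the a.e.\ convergence with an equi-integrability argument of Vitali/Brezis--Lieb type, controlling tails via the rescaled mass identity $\int u_\lambda(x,1)\rho_\lambda(x)\,dx = M$ and the limiting identity $\int u^{c_\infty}_M(x,1)\,c_\infty|x|^{-\gamma}\,dx=M$; the two masses being equal in the limit forces $|u_\lambda(\cdot,1)-u^{c_\infty}_M(\cdot,1)|\,|x|^{-\gamma}$ to be asymptotically tight. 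Contributions from $|x|$ large are absorbed by the decay built into $\rho$ and the smoothing effect, while contributions from a neighbourhood of the origin are handled using the $L^\infty$ bound on $u_\lambda(\cdot,1)$ and the local integrability of $|x|^{-\gamma}$ ensured by $\gamma<2s<d$. I expect this tail control, together with the mass-matching argument that replaces the missing uniform regularity, to be the technical heart of the proof.
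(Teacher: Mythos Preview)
Your proposal is correct and follows essentially the same route as the paper: the paper uses exactly the rescaling/compactness/identification scheme you describe (this is Proposition~\ref{prorisc}), and then upgrades to global $L^1_{|x|^{-\gamma}}$ convergence via precisely the mass-matching tail argument you sketch, namely by proving $\int_{\mathbb{R}^d} u(y,t)\,|y|^{-\gamma}\,\mathrm{d}y \to Mc_\infty^{-1}$ from \eqref{eq: ass-rho-slow-lim} and comparing with the corresponding identity for $u^{c_\infty}_M$. The one place where you are vague---``a separate argument, based on mass conservation\ldots shows $c_\infty|x|^{-\gamma}U(t)\to M\delta$''---is genuinely delicate: mass conservation alone does not pin down the initial trace of the limit, and the paper (following \cite{GMP2}) controls short times via a uniform-in-$\lambda$ Riesz-potential estimate $|\int U_\lambda(t)\phi - \int U_{0\lambda}\phi|\le C\int_0^t \tau^{-\alpha(m-1)}\,\mathrm{d}\tau$, which survives the limit and forces the trace to be $M\delta$.
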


Notice once again that the range of $\gamma$ for which the above theorem holds true simplifies to $[0,2s)$ when $d \ge 4s$, which is, to some extent, the maximal one for which one can expect a similar result.

Theorem \ref{thmab2} will be proved in Section \ref{as2}, while Theorem \ref{thmab1} will be proved in Section \ref{as1}.

\section{A fractional sublinear elliptic equation}\label{subl}
Prior to analysing the asymptotic behaviour of solutions to \eqref{eq: regular} when $\rho(x)$ is a density that decays fast as $|x|\to \infty$ (discussed in Section \ref{as2}), we need to study the sublinear elliptic equation \eqref{19021}, which naturally arises from such asymptotic analysis.

Let us recall that if $\varphi$ is a smooth and compactly supported function defined in $\mathbb R^d$, we can consider its $s$-harmonic extension $\D{E}(\varphi)$ to the upper half-space $\mathbb{R}^{d+1}_+ = \{(x,y): \, x \in \mathbb R^d, \, y>0 \}$, namely the unique smooth and bounded solution to the problem
\[
\begin{cases}
\D{div} \left( y^{1-2s}\nabla{\D{E}(\varphi)} \right)=0 & \textrm{in } \mathbb{R}^{d+1}_+ \, , \\
\D{E}(\varphi)=\varphi & \textrm{on } \partial \mathbb{R}^{d+1}_+ = \mathbb{R}^d \times \{y= 0 \} \, .
\end{cases}
\]
It has been proved (see e.g.\ \cite{CafS,DQRV,CS1}) that
\[
-\mu_s \lim_{y\to 0^+} y^{1-2s}\frac{\partial \D{E}(\varphi)}{\partial y}(x,y)=\left(-\Delta \right)^{s}(\varphi)(x) \quad \forall x \in \mathbb{R}^d \, ,
\]
where $\mu_s=\frac{2^{2s-1}\Gamma(s)}{\Gamma(1-s)}$. It is therefore convenient to define the operators
\begin{gather*}
L_s = \D{div} \left( y^{1-2s}\nabla \right) , \\
\frac{\partial }{\partial y^{2s}} = - \mu_s \lim_{y\to 0^+} y^{1-2s}\frac{\partial }{\partial y} \, .
\end{gather*}
We also denote by $X^{s}$ the completion of $C^\infty_c(\mathbb{R}^{d+1}_+ \cup \partial \mathbb{R}^{d+1}_+)$ w.r.t.\ the norm
\[
\| \psi \|_{X^s} = \left(\mu_s \int_{\mathbb{R}^{d+1}_+} y^{1-2s}\left|\nabla \psi(x,y) \right|^2  \dd x \dd y\right)^\frac{1}{2} \quad \forall  \psi \in C^\infty_c(\mathbb{R}^{d+1}_+ \cup \partial \mathbb{R}^{d+1}_+) \, .
\]
Furthermore, by the symbol $X^{s}_{\rm loc}$, we shall mean the space of all functions $v$ such that $\psi v \in X^{s}$ for any $\psi \in C^\infty_c(\mathbb{R}^{d+1}_+ \cup \partial \mathbb{R}^{d+1}_+)$.

It is possible to prove that there exists a well defined notion of trace on $\partial \mathbb{R}^{d+1}_+$ for every function in $X^{s}$ (see e.g.\ \cite[Section 3.2]{DQRV}, \cite[Section 2]{BCdP} or \cite[Section 3.1]{CS1}). Moreover, for every $v \in \dot{H}^s(\mathbb{R}^d)$ there exists a unique extension $\D{E}(v)\in X^{s}$ such that
$$ \D{E}(v)(x,0)=v(x) \quad  \textrm{for a.e.\ } x \in \mathbb{R}^d  $$
and
$$ {\mu_s} \int_{\mathbb{R}^{d+1}_+}  y^{1-2s} \langle \nabla{\D{E}(v)}, \nabla{\psi} \rangle(x,y) \, \dd x \dd y = \int_{\mathbb{R}^d} (-\Delta)^s(v)(x) \,  (-\Delta)^s(\psi)(x,0) \, \dd x  $$
for any $\psi \in C^\infty_c(\mathbb{R}^{d+1}_+ \cup \partial \mathbb{R}^{d+1}_+)$. 

Having at our disposal the above tools, we can provide suitable weak formulations of problem \eqref{19021} which deal with the harmonic extension. In fact, at a formal level, looking for a solution $w$ to \eqref{19021} is the same as looking for a pair of functions $(w,\tilde{w})$ solving the problem
\begin{equation}\label{e20d}
\begin{cases}
L_{s} \tilde w = 0 & \textrm{in } \mathbb{R}^{d+1}_+ \, , \\
\tilde{w}=w & \textrm{on } \partial \mathbb{R}^{d+1}_+ \, , \\
\displaystyle\frac{\partial \tilde{w}}{\partial y^{2s}}= \rho \, w^\alpha  & \textrm{on }  \partial \mathbb{R}^{d+1}_+ \, , \\
\end{cases}
\end{equation}
with $0<\alpha<1$.

\begin{den}\label{defsol01}
A local weak solution to problem \eqref{e20d} is a bounded nonnegative function $w$ such that, for some nonnegative $\tilde w \in X^{s}_{\rm loc} \cap L^\infty_{\rm loc}(\mathbb{R}^{d+1}_+ \cup \partial\mathbb{R}^{d+1}_+) $ (what we call a \emph{local extension} for $w$), there holds $\tilde{w}|_{\partial \mathbb{R}^{d+1}_+}=w$ and
\[
\int_{\mathbb{R}^d}  w^\alpha(x)\,  \psi(x,0) \, \rho(x) \dd x =  {\mu_s}\int_{\mathbb{R}^{d+1}_+}  y^{1-2s} \langle \nabla{\tilde{w}} , \nabla{\psi} \rangle(x,y) \, \dd x \dd y
\]
for any $\psi \in C^\infty_c(\mathbb{R}^{d+1}_+ \cup \partial\mathbb{R}^{d+1}_+)$.
\end{den}

\begin{den}\label{defsol01-very-weak}
A bounded, nonnegative function $w$ is a very weak solution to problem \eqref{19021} if it satisfies
\[
\int_{\mathbb{R}^d}
w^\alpha(x) \varphi(x) \, \rho(x)  {\rm d}x = \int_{\mathbb{R}^d} w(x) (-\Delta)^{s}(\varphi)(x) \, {\rm d}x
\]
for any  $\varphi \in C^\infty_c(\mathbb{R}^d)$.
\end{den}

\begin{den}\label{defsol01-weak-energy}
A nonnegative function $w \in \dot{H}^s(\mathbb{R}^d)$ is a weak solution to problem \eqref{19021} if it satisfies
\begin{equation} \label{weak-ener}
\begin{aligned}
 \int_{\mathbb{R}^d} w^\alpha(x) \psi(x,0) \, \rho(x)  {\rm d}x = & \int_{\mathbb{R}^d}  (-\Delta)^{\frac{s}{2}}(w)(x) (-\Delta)^{\frac{s}{2}}(\psi)(x,0) \, {\rm d}x \\
= & {\mu_s}\int_{\mathbb{R}^{d+1}_+}  y^{1-2s} \langle \nabla{\D{E}(w)} , \nabla{\psi} \rangle(x,y) \, \dd x \dd y
\end{aligned}
\end{equation}
for any $\psi \in C^\infty_c(\mathbb{R}^{d+1}_+ \cup \partial \mathbb{R}^{d+1}_+)$.
\end{den}
Note that a bounded weak solution is a solution to \eqref{19021} in the sense of both Definition \ref{defsol01} and Definition \ref{defsol01-very-weak}.

What follows in this section aims at studying existence and uniqueness of solutions to \eqref{e20d} (and \eqref{19021}), according to Definition \ref{defsol01} (and \ref{defsol01-very-weak}, \ref{defsol01-weak-energy}). Our results are the following.

\begin{thm}[existence]\label{prthm8}
Let $\alpha \in (0,1)$. Let $\rho \in C^{\sigma}_{\rm loc}(\mathbb R^d)$ (for some $\sigma\in (0,1)$) be strictly positive and
such that $\rho(x)\leq C_0 |x|^{-\gamma}$ in $B_1^c$ for some $\gamma>2s$ and $C_0>0$. Then there exists a local weak solution $w$ to problem \eqref{e20d}, which is minimal in the class of nonidentically zero local weak solutions (according to Definition \ref{defsol01}). Moreover, $w$ is a very weak solution to \eqref{19021} (in the sense of Definition \ref{defsol01-very-weak}) and satisfies the estimate
\begin{equation}\label{e59}
w(x) \leq C (I_{2s} \ast \rho)(x) \quad  \forall x \in \mathbb{R}^d
\end{equation}
for some $C>0$.

Finally, if $\gamma $ complies with the more restrictive condition
\begin{equation}\label{eq:
cond-ener} \gamma > 2s + \frac{d-2s}{\alpha+2} \, ,
\end{equation}
then $w$ is also a weak solution to \eqref{19021} (according to Definition \ref{defsol01-weak-energy}).
\end{thm}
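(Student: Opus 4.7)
The plan is to adapt the classical Brezis--Kamin approach to the fractional weighted setting, constructing the minimal solution as an increasing limit of solutions on balls, dominated by a multiple of the Riesz potential $V := I_{2s} \ast \rho$. First I would check that, since $\gamma > 2s$ and $d > 2s$, the decay $\rho(x) \le C_0 |x|^{-\gamma}$ together with $\rho \in C^\sigma_{\rm loc}$ guarantees that $V$ is well-defined, bounded, continuous, and satisfies $(-\Delta)^s V = \rho$ pointwise a.e.\ and in the very weak sense. Choosing $A>0$ with $A^{1-\alpha} \ge \|V\|_\infty^\alpha$, the function $\overline{w} := A V$ is a bounded classical supersolution: $(-\Delta)^s \overline{w} = A\rho \ge A^\alpha \|V\|_\infty^\alpha \rho \ge \rho \, \overline{w}^\alpha$. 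This already encodes the target estimate \eqref{e59}.

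Next, for each $R>0$ I would solve the Dirichlet problem $(-\Delta)^s w_R = \rho\, w_R^\alpha$ in $B_R$, $w_R = 0$ in $B_R^c$, by minimising the sublinear functional
\begin{equation*}
J(w) \;=\; \tfrac{1}{2}\bigl\| (-\Delta)^{s/2}(w) \bigr\|_2^2 \,-\, \tfrac{1}{\alpha+1} \int_{B_R} \rho(x)\, |w(x)|^{\alpha+1}\, \mathrm{d}x
\end{equation*}
on the fractional Sobolev space $H^s_0(B_R)$. Sublinearity ($\alpha+1 < 2$) makes $J$ coercive and bounded below; testing with a small positive bump shows $\inf J<0$, so the minimiser is nontrivial; a strong maximum principle for $(-\Delta)^s$ then yields $w_R > 0$ in $B_R$. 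Comparison with $\overline{w}$ gives $w_R \le \overline{w}$ uniformly, and comparison on $B_R \subset B_{R'}$ shows $R \mapsto w_R$ is monotone nondecreasing. Setting $w(x) := \lim_{R\to\infty} w_R(x)$, monotone and dominated convergence let me pass to the limit against any $\varphi \in C^\infty_c(\mathbb{R}^d)$, giving the very weak formulation (Definition \ref{defsol01-very-weak}) together with \eqref{e59}. The local weak formulation of \eqref{e20d} then follows by taking $\tilde{w}$ to be the Caffarelli--Silvestre extension of $w$ (or the pointwise limit of the extensions of $w_R$, which lie in $X^s_{\rm loc}$ by construction). Minimality comes for free: any other nontrivial local weak solution $w'$ is, via the maximum principle on each $B_R$, a supersolution to the Dirichlet problem defining $w_R$, hence $w_R \le w'$ for every $R$ and so $w \le w'$.

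The delicate part is upgrading to a weak energy solution under the sharper assumption $\gamma > 2s + \tfrac{d-2s}{\alpha+2}$. Formally, the natural energy identity reads $\bigl\| (-\Delta)^{s/2}(w) \bigr\|_2^2 = \int_{\mathbb{R}^d} \rho\, w^{\alpha+1}\, \mathrm{d}x$; the task is to show this right-hand side is finite and to make the identity rigorous. Using $w \le C V$ together with the decay $V(x) \lesssim (1+|x|)^{-(\gamma - 2s)}$ valid for $2s < \gamma < d$ (with the borderline and $\gamma > d$ cases giving stronger decay), one gets
\begin{equation*}
\int_{\mathbb{R}^d} \rho(x)\, w(x)^{\alpha+1}\, \mathrm{d}x \;\lesssim\; \int_{\mathbb{R}^d} (1+|x|)^{-\gamma - (\gamma-2s)(\alpha+1)}\, \mathrm{d}x,
\end{equation*}
which is finite exactly when $\gamma(\alpha+2) - 2s(\alpha+1) > d$, i.e.\ precisely under \eqref{eq: cond-ener}. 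This is the main obstacle, and I would make it rigorous by testing the bounded-domain equation against $w_R \in H^s_0(B_R) \subset \dot{H}^s(\mathbb{R}^d)$, producing a uniform $\dot{H}^s$ bound for $\{w_R\}$ from the above integral estimate. Weak compactness in $\dot{H}^s(\mathbb{R}^d)$ combined with the pointwise monotone convergence $w_R \uparrow w$ then identifies $w$ as the weak-$\dot{H}^s$ limit, and Fatou together with the monotone convergence theorem on the right-hand side passes \eqref{weak-ener} to the limit.
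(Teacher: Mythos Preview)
Your approach is the natural Brezis--Kamin scheme transplanted to the fractional setting via the \emph{restricted} Dirichlet problem on $B_R$ (with $w_R\equiv 0$ on $B_R^c$), whereas the paper works entirely through the Caffarelli--Silvestre extension, solving the mixed problem \eqref{e20c} on the half-ball $\Omega_R\subset\mathbb{R}^{d+1}_+$ with $\tilde w_R=0$ on the curved boundary $\Sigma_R$. Both families are monotone and dominated by a multiple of $I_{2s}\ast\rho$, and the energy-solution upgrade under \eqref{eq: cond-ener} is handled the same way in both. One point where your route is actually \emph{shorter}: you get the very weak formulation (Definition \ref{defsol01-very-weak}) directly, by noting that for fixed $\varphi\in C^\infty_c(\mathbb{R}^d)$ and $R$ large, $\int w_R(-\Delta)^s\varphi=\int_{B_R}\rho\,w_R^\alpha\varphi$, and then passing to the limit by dominated convergence. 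The paper instead postpones this to Section \ref{as2}, recovering it only through the parabolic asymptotics of Theorem \ref{thmab2}.

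There is, however, a genuine gap in your minimality argument. You write that any other nontrivial local weak solution $w'$ is ``via the maximum principle on each $B_R$'' a supersolution to your Dirichlet problem, hence $w_R\le w'$. But the class of competitors is given by Definition \ref{defsol01}: $w'$ comes equipped only with a \emph{local} extension $\tilde w'\in X^s_{\rm loc}\cap L^\infty_{\rm loc}$, and you are not told that $w'\in\dot H^s(\mathbb{R}^d)$ nor that it satisfies any integral $(-\Delta)^s$-inequality on $\mathbb{R}^d$. Your $w_R$ lives in $H^s_0(B_R)$ and its harmonic extension $E(w_R)$ is \emph{not} zero on $\Sigma_R$, so you cannot feed the pair $(w_R,w')$ into a sublinear comparison on $\Omega_R$ without first controlling the boundary values on $\Sigma_R$; and a comparison directly on $\mathbb{R}^d$ would require testing against $(w_R-w')_+$, which need not lie in any space where the equation for $w'$ makes sense. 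This is precisely why the paper chooses approximants with $\tilde w_R|_{\Sigma_R}=0$: then Proposition \ref{prop-CS1}\eqref{st-1} applies on $\Omega_R$ with $\tilde w_R|_{\Sigma_R}=0\le \tilde w'|_{\Sigma_R}$, and minimality follows with no further input on $w'$. To repair your argument you would either have to show that every local weak solution in the sense of Definition \ref{defsol01} is automatically a supersolution to the restricted Dirichlet problem on each $B_R$ (which requires an extra argument linking $X^s_{\rm loc}$-extensions to $H^s_0(B_R)$-duality), or else revert to half-ball approximants for the minimality step.
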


\begin{thm}[uniqueness]\label{02034}
Let $d>4s$ and $\alpha \in (0,1)$. Let $\rho\in C^{\sigma}_{\rm
loc}(\mathbb R^d)$ (for some $\sigma \in (0,1)$) be strictly
positive and such that $\rho(x)\leq C_0 |x|^{-\gamma}$ in $B_1^c$
for some $\gamma > 4s $ and $C_0>0$. Let $\underline{w}$
be the minimal solution to problem \eqref{e20d} provided by
Theorem \ref{prthm8}. Let $w$ be any other local weak solution to
problem \eqref{e20d} (according to Definition \ref{defsol01}),
which is also a very weak solution to problem \eqref{19021}
(according to Definition \ref{defsol01-very-weak}) and such that $
w \not \equiv 0$ and $ w \in L^1_{(1+|x|)^{-d+2s}}(\mathbb{R}^d)$.
Then $w=\underline{w}$.
\end{thm}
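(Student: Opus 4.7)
The plan is to carry out a Brezis--Kamin--Oswald type uniqueness argument tailored to the fractional setting, exploiting the strict sublinearity of $t\mapsto t^\alpha$ for $\alpha\in(0,1)$. Minimality of $\underline{w}$ provided by Theorem \ref{prthm8} already gives $\underline{w}\le w$ pointwise, so the whole point is to establish the reverse inequality.

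The first key step I would carry out is to show that both $w$ and $\underline{w}$ admit the Riesz potential representation
\[
w(x)=\bigl(I_{2s}\ast(\rho\,w^\alpha)\bigr)(x),\qquad \underline{w}(x)=\bigl(I_{2s}\ast(\rho\,\underline{w}^\alpha)\bigr)(x).
\]
For $\underline{w}$ this essentially follows from its construction and the bound \eqref{e59} of Theorem \ref{prthm8}, which guarantees that $\underline{w}\in L^{1}_{(1+|x|)^{-d+2s}}(\mathbb R^d)$. For the generic $w$ in the class of the statement, I would argue that $\rho w^\alpha$ is integrable against $(1+|x|)^{-d+2s}$ (using $w\in L^\infty$ and $\gamma>4s>2s$), so $I_{2s}\ast(\rho w^\alpha)$ is a well defined bounded function. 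Setting $h:=w-I_{2s}\ast(\rho w^\alpha)$, the very weak formulation (Definition \ref{defsol01-very-weak}) and the fact that $I_{2s}$ inverts $(-\Delta)^s$ yield $(-\Delta)^s h=0$ in the very weak sense, with $h$ bounded. Hence $h\equiv c$ by the Liouville theorem for bounded $s$-harmonic functions. Since $I_{2s}\ast(\rho w^\alpha)\to 0$ at infinity (Lemma \ref{l100}) and $w\in L^1_{(1+|x|)^{-d+2s}}$ while nonzero constants do not belong to this weighted space, one forces $c=0$. This step crucially uses the decay assumption on $w$ and gives, as a by-product, $w>0$ everywhere (and similarly for $\underline{w}$), since $\rho>0$ and $I_{2s}>0$.

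The second key step is the Brezis--Kamin symmetry identity. From the two Riesz representations, Tonelli applied to the nonnegative double integral gives
\[
\int_{\mathbb R^d}\rho\,w\,\underline{w}^\alpha\,\mathrm{d}x=\iint_{\mathbb R^d\times\mathbb R^d} I_{2s}(x-y)\,\rho(x)\underline{w}^\alpha(x)\,\rho(y)w^\alpha(y)\,\mathrm{d}x\,\mathrm{d}y=\int_{\mathbb R^d}\rho\,\underline{w}\,w^\alpha\,\mathrm{d}x
\]
in $[0,+\infty]$, by the symmetry $I_{2s}(x-y)=I_{2s}(y-x)$. Subtracting and factoring yields
\[
\int_{\mathbb R^d}\rho\,w\,\underline{w}\,\bigl(\underline{w}^{\alpha-1}-w^{\alpha-1}\bigr)\,\mathrm{d}x=0.
\]
Since $0<\underline{w}\le w$ and $\alpha-1<0$, the integrand is nonnegative and vanishes precisely where $\underline{w}=w$; together with $\rho\,w\,\underline{w}>0$ a.e., this forces $w=\underline{w}$ a.e.

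The main obstacle I expect is the \emph{finiteness} of the integrals in the Brezis--Kamin identity (otherwise the subtraction step is not permitted). This is exactly where the joint hypotheses $d>4s$ and $\gamma>4s$ will have to be exploited: using $w,\underline{w}\lesssim I_{2s}\ast\rho$ together with the decay $\rho(x)\lesssim|x|^{-\gamma}$, one must verify that $\rho\,w\,\underline{w}^\alpha\in L^1(\mathbb R^d)$ (equivalently that the symmetric double integral converges). A more delicate bootstrap on the decay of $\underline{w}$ at infinity, starting from the pointwise estimate \eqref{e59} and iterating the fixed point equation $\underline{w}=I_{2s}\ast(\rho\underline{w}^\alpha)$, should deliver the needed integrability; if a direct estimate fails in some subrange, a spatial truncation of $\underline{w}$ in the identity followed by monotone convergence provides a fall-back.
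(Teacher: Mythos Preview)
Your route via the Brezis--Kamin cross-testing identity is genuinely different from the paper's, which never forms the integrals $\int\rho\,w\,\underline{w}^\alpha$ at all but runs a \emph{parabolic} argument instead. Setting $m=1/\alpha$ and $C_m=(m-1)^{-1/(m-1)}$, the paper observes that $V(x,t):=C_m(t+1)^{-1/(m-1)}w^{1/m}(x)$ is a very weak solution of $\rho\,u_t+(-\Delta)^s(u^m)=0$ with initial datum $C_m w^{1/m}$, and constructs a second such solution $v_\infty$ as a monotone limit of solutions on half-balls (with truncated data $C_m\zeta_k w^{1/m}$). By construction $v_\infty\le V$, and both satisfy $u^m\in L^1_{(1+|x|)^{-d+2s}}(\mathbb R^d\times(0,T))$ because $w\in L^1_{(1+|x|)^{-d+2s}}(\mathbb R^d)$; the parabolic uniqueness for ordered very weak solutions (Theorem \ref{prthm6}, Remark \ref{oss101}) then forces $v_\infty=V$. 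On the other hand the approximating scheme also yields $v_\infty\le C_m\,t^{-1/(m-1)}\underline{w}^{1/m}$, so $(w/\underline{w})^{1/m}\le((t+1)/t)^{1/(m-1)}\to1$, giving $w\le\underline{w}$.

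Your approach has a genuine gap precisely at the point you flag: the finiteness of $\int\rho\,w\,\underline{w}^\alpha$ is \emph{not} guaranteed by the hypotheses $d>4s$, $\gamma>4s$, even after bootstrapping the decay of $\underline{w}$ (and of $w$) through the fixed-point relation. Iterating $\underline{w}=I_{2s}\ast(\rho\,\underline{w}^\alpha)$ gives at best $\underline{w}(x),\,w(x)\lesssim(1+|x|)^{-\kappa}$ for any $\kappa<\min\bigl(\kappa^*,\,d-2s\bigr)$ with $\kappa^*=(\gamma-2s)/(1-\alpha)$. Hence $\rho\,w\,\underline{w}^\alpha\lesssim(1+|x|)^{-\gamma-(1+\alpha)\kappa}$, and integrability requires $\gamma+(1+\alpha)\kappa>d$. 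When $\kappa^*<d-2s$ this amounts to $2\gamma>d(1-\alpha)+2s(1+\alpha)$, which can fail: take $d=100s$, $\gamma=5s$, $\alpha=1/100$, so that $\kappa^*\approx 3.03s$ while the needed inequality reads $10s>101.02s$. Your truncation fallback does not repair this: restricting the integration to $B_R$ destroys the symmetry that produces the identity, and monotone convergence alone cannot turn an equality of two $+\infty$'s into the cancellation $\int\rho\,w\,\underline{w}\,(\underline{w}^{\alpha-1}-w^{\alpha-1})=0$. The parabolic detour in the paper is exactly what replaces this missing global integrability by a pointwise comparison that improves as $t\to\infty$.
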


The next lemma, which provides us with elementary estimates from above for the Riesz potential of $ \rho $, is key to our analysis. We skip the proof since it is just a matter of routine computations which exploit decay and integrability properties of $ I_{2s} $ and $\rho$.
\begin{lem}\label{l100}
Let $d>2s$ and $ \rho \ge 0  $ be a measurable function. Assume in
addition that $ \rho(x) \leq C (1+|x|)^{-\gamma} $ for some
$\gamma>2s$ and $C>0$. Then, $ I_{2s} \ast \rho $ is a nonnegative
continuous function and there exists a constant $K>0$ such that
\begin{equation*}\label{lem: riesz-rho-decay}
(I_{2s} \ast \rho)(x) \le K \left(1 + |x| \right)^{-\kappa} \quad
\forall x \in \mathbb R^d \, ,
\end{equation*}
where:
\begin{enumerate}[(a)]
\item if $ \gamma < d $, $ \kappa=\gamma-2s $; \item if $ \gamma=d
$, $ \kappa = d-2s-\varepsilon $ for all $ \varepsilon>0 $ (with
$K=K(\varepsilon)$); \item if $ \gamma>d $, $ \kappa=d-2s  $.
\end{enumerate}
\end{lem}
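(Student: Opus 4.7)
The plan is to reduce the whole statement to a direct split of the convolution integral, following the standard trichotomy used for Riesz potentials of decaying weights. Nonnegativity of $I_{2s}\ast\rho$ is immediate from the nonnegativity of both factors. For continuity, I would invoke dominated convergence: away from $y=x$ the integrand $\rho(y)/|x-y|^{d-2s}$ depends continuously on $x$, the singularity at $y=x$ is locally integrable since $d-2s<d$, and the a priori pointwise bound derived below furnishes a dominating function. A short separate argument shows $(I_{2s}\ast\rho)(x)$ is bounded on $\{|x|\le1\}$ by splitting into $\{|y-x|\le1\}$ (where $\rho$ is bounded and the kernel is locally integrable) and $\{|y-x|\ge1\}$ (where the kernel is $\le 1$ and $\int_{|y|\ge 1}(1+|y|)^{-\gamma}|y|^{-(d-2s)}\,\dd y<\infty$ because $\gamma>2s$).

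The core of the proof is the decay estimate for $|x|\ge 1$. I would decompose $\mathbb{R}^d = A(x)\cup B(x)\cup C(x)$, where $A(x)=\{|y|\le |x|/2\}$, $B(x)=\{|y-x|\le|x|/2\}$, and $C(x)$ is the complement. On $A(x)$ one has $|x-y|\ge |x|/2$, so pulling the kernel out gives a bound $|x|^{-(d-2s)}\int_{A(x)}\rho(y)\,\dd y$; using $\rho(y)\lesssim (1+|y|)^{-\gamma}$ this integral is $O(1)$ if $\gamma>d$, $O(\log|x|)$ if $\gamma=d$, and $O(|x|^{d-\gamma})$ if $\gamma<d$, producing the bounds $|x|^{-(d-2s)}$, $|x|^{-(d-2s)}\log|x|$, and $|x|^{-(\gamma-2s)}$ respectively. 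On $B(x)$ one has $|y|\ge |x|/2$, so $\rho(y)\lesssim|x|^{-\gamma}$; pulling $\rho$ out and integrating $|x-y|^{-(d-2s)}$ over a ball of radius $|x|/2$ yields $|x|^{-\gamma}\cdot|x|^{2s}=|x|^{-(\gamma-2s)}$.

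On $C(x)$ I would split further into $C_1(x)=C(x)\cap\{|y|\le 2|x|\}$ and $C_2(x)=C(x)\cap\{|y|\ge 2|x|\}$. On $C_1(x)$ both $|y|\sim|x|$ and $|x-y|\ge|x|/2$, so the integrand is $\lesssim |x|^{-\gamma-(d-2s)}$ integrated over a set of measure $\lesssim|x|^d$, contributing $|x|^{-(\gamma-2s)}$. On $C_2(x)$ we have $|x-y|\ge |y|/2$, so the contribution is dominated by $\int_{|y|\ge 2|x|}|y|^{-\gamma-(d-2s)}\,\dd y\lesssim |x|^{-(\gamma-2s)}$, where convergence at infinity is exactly where the hypothesis $\gamma>2s$ is used. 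Collecting the three (or four) pieces and invoking $|x|^{-(d-2s)}\log|x|\le K_\varepsilon|x|^{-(d-2s-\varepsilon)}$ in the borderline case $\gamma=d$ gives the three stated regimes.

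The only mildly subtle point is this $\log|x|$ loss when $\gamma=d$, which forces the $\varepsilon$-loss in case (b); apart from that the argument is bookkeeping of three region estimates.
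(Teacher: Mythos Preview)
Your argument is correct and is precisely the kind of ``routine computation'' the paper has in mind: the paper omits the proof entirely, merely remarking that it follows from elementary decay and integrability properties of $I_{2s}$ and $\rho$, and your three-region split is the standard way to make those computations explicit. Every estimate you wrote checks out, including the identification of the $\log|x|$ loss on the region $A(x)$ as the sole source of the $\varepsilon$-loss in case~(b).
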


In view of Theorem \ref{prthm8} and Lemma \ref{l100}, it is apparent that under the assumptions of Theorem \ref{02034} the minimal solution $ \underline{w} $ does belong to $L^1_{(1+|x|)^{-d+2s}}(\mathbb{R}^d)$.

\subsection{Existence}\label{sottosez-not}
Here we shall prove all the properties of $w$ claimed in Theorem \ref{prthm8}, \emph{except} the fact that $w$ is a very weak solution to problem \eqref{19021} in the sense of Definition \ref{defsol01-very-weak} for all $\gamma>2s$. This will be in fact a consequence of the asymptotic analysis of Section \ref{as2}.

Let us start off with some preliminaries. We consider first the following problem: find $(w_R,\tilde{w}_R)$ such that
\begin{equation}\label{e20c}
\begin{cases}
L_{s} \tilde{w}_R = 0  &  \textrm{in } \Omega_R \, , \\
\tilde{w}_R = 0 & \textrm{on } \Sigma_R \, , \\
\tilde{w}_R = w_R & \textrm{on } \Gamma_R  \, , \\
\displaystyle \frac{\partial \tilde{w}_R}{\partial y^{2s}}= \rho \, w_R^\alpha & \textrm{on } \Gamma_R  \, , \\
\end{cases}
\end{equation}
where $\Omega_R= \{(x,y) \in \mathbb{R}^{d+1}_+ : \ |(x,y)|<R\}$, $ \Sigma_R=\partial \Omega_R \cap \{y>0\}$ and $ \Gamma_R=\partial \Omega_R \cap \{y=0\}$. We denote by $X_0^{s}(\Omega_R)$ the completion of $C^\infty_c(\Omega_R \cup \Gamma_R)$ w.r.t.\ the norm
\[
\| \psi \|_{X_0^{s}(\Omega_R)} = \left(\mu_s \int_{\Omega_R} y^{1-2s}|\nabla \psi(x,y) |^2 \, \dd x \dd y\right)^\frac{1}{2} \quad \forall \psi \in C^\infty_c(\Omega_R \cup \Gamma_R) \, .
\]

\begin{den}\label{defsolc}
A weak solution to problem \eqref{e20c} is a pair of nonnegative functions $(w_R,\tilde{w}_R)$ such that:
\begin{itemize}
\item{} $w_R^\alpha \in L^1(B_R)$, $ \tilde{w}_R \in X_0^{s}(\Omega_R)$;
\item{} $ \tilde{w}_R|_{\Gamma_R}= w_R$;
\item{} for any $ \psi \in C^\infty_c(\Omega_R \cup \Gamma_R)$ there holds
\begin{equation}\label{03011-4}
\int_{B_R} w_R^\alpha(x) \, \psi(x,0)\, \rho(x) \mathrm{d}x = {\mu_s}\int_{\Omega_R}  y^{1-2s} \langle \nabla{\tilde{w}_R} , \nabla{\psi} \rangle(x,y) \, \dd x \dd y   \, .
\end{equation}
\end{itemize}
\end{den}
The next existence result concerning problem \eqref{e20c} can be proved by standard variational methods (see e.g.\ \cite{BCdP}).

\begin{pro}\label{190213}
Let $\alpha\in(0,1)$. Let $\rho\in L^\infty_{\rm loc}({\mathbb R}^d)$ be positive and such that $\rho^{-1}\in L^\infty_{\rm loc}({\mathbb R}^d)$. Then there exists a non-identically zero weak solution $(w_R,\tilde{w}_R)$ to problem \eqref{e20c}, in the sense of Definition \ref{defsolc}.
\end{pro}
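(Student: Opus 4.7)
The plan is to obtain $(w_R,\tilde w_R)$ as a nontrivial nonnegative minimizer of the energy functional
\[
J(v)=\frac{\mu_s}{2}\int_{\Omega_R} y^{1-2s}|\nabla v(x,y)|^2\dd x \dd y -\frac{1}{\alpha+1}\int_{B_R}|v(x,0)|^{\alpha+1}\rho(x)\dd x
\]
on $X_0^s(\Omega_R)$, and then to identify the Euler--Lagrange equation with the weak formulation \eqref{03011-4}. Since $\rho$ is bounded on the bounded set $B_R$, and since the trace inequality for the weighted space (see e.g.\ \cite{CS1,BCdP}) gives a continuous embedding $X_0^s(\Omega_R)\hookrightarrow L^q(B_R)$ for every $q\in[1,2d/(d-2s)]$, which is \emph{compact} for $q<2d/(d-2s)$, the functional $J$ is well defined and continuous on $X_0^s(\Omega_R)$; the exponent $q=\alpha+1<2$ falls well inside the compact range.

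Coercivity of $J$ is then immediate because $\alpha+1<2$: the negative term is controlled by $C\|v\|_{X_0^s(\Omega_R)}^{\alpha+1}$, while the positive quadratic gradient term dominates at infinity. The infimum is strictly negative: picking any nonnegative $v_0\in C_c^\infty(\Omega_R\cup\Gamma_R)$ with $v_0(\cdot,0)\not\equiv 0$, one has $J(tv_0)=c_1 t^2-c_2 t^{\alpha+1}$ with $c_1,c_2>0$, which is negative for $t>0$ small enough, again thanks to $\alpha+1<2$.

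Next I would apply the direct method. Given a minimizing sequence $\{v_n\}\subset X_0^s(\Omega_R)$, by Stampacchia's lemma $J(|v_n|)\le J(v_n)$, so one may assume $v_n\ge 0$. By coercivity $\{v_n\}$ is bounded in $X_0^s(\Omega_R)$, hence up to a subsequence $v_n\rightharpoonup\tilde w_R\ge 0$ weakly. Weak lower semicontinuity of the gradient term combined with the compactness of the trace embedding into $L^{\alpha+1}(B_R)$ (which is where compactness is really needed) yields $J(\tilde w_R)\le\liminf_n J(v_n)=\inf J<0$, so $\tilde w_R\not\equiv 0$, and in particular $w_R:=\tilde w_R(\cdot,0)\not\equiv 0$, since otherwise $J(\tilde w_R)\ge 0$, contradicting the strict negativity of the infimum.

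The final step is to verify that $\tilde w_R$ satisfies \eqref{03011-4}. Computing the directional derivative $\frac{\mathrm{d}}{\mathrm{d}\varepsilon}J(\tilde w_R+\varepsilon\psi)|_{\varepsilon=0}=0$ along any $\psi\in C_c^\infty(\Omega_R\cup\Gamma_R)$ gives exactly the weak formulation: the gradient term is standard, while for the boundary term the pointwise derivative at $\varepsilon=0$ of $\varepsilon\mapsto|\tilde w_R+\varepsilon\psi|^{\alpha+1}/(\alpha+1)$ equals $\tilde w_R^\alpha\psi$ a.e.\ (with the convention $0^\alpha=0$, consistent because $\alpha+1>1$), and the incremental quotient is bounded by $C(\tilde w_R^\alpha+|\psi|^\alpha)|\psi|$, which belongs to $L^1(B_R,\rho\dd x)$ thanks to H\"older's inequality. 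By dominated convergence one recovers \eqref{03011-4} with $w_R=\tilde w_R(\cdot,0)$. The main subtle point of the plan is precisely this last step: since $\alpha<1$, the map $t\mapsto|t|^{\alpha+1}$ is not $C^1$ at the origin and $J$ fails to be Fr\'echet differentiable, so the Euler--Lagrange identity has to be obtained by the pointwise one-sided derivative computation plus the dominated-convergence argument just described, rather than by an abstract differentiability statement.
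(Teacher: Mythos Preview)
Your proof is correct and is precisely the ``standard variational method'' the paper alludes to without giving details (it merely cites \cite{BCdP}). One small correction, which does not affect validity: since $\alpha+1>1$, the map $t\mapsto|t|^{\alpha+1}$ \emph{is} $C^1$ on $\mathbb{R}$ (with derivative $(\alpha+1)|t|^{\alpha-1}t$, vanishing at $0$), and in fact $J$ is Fr\'echet differentiable on $X_0^s(\Omega_R)$; so the Euler--Lagrange step is less delicate than you suggest, though your dominated-convergence argument is of course also fine.
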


The following regularity and comparison results for problem \eqref{e20c} will be crucial in the proof of Theorem \ref{prthm8} (specially as for minimality).

\begin{pro}\label{prop-CS1}
Let $\alpha\in(0,1)$. Let $\rho\in L^\infty_{\rm loc}({\mathbb
R}^d)$ be nonnegative and such that $\rho^{-1} \in L^\infty_{\rm
loc}({\mathbb R}^d)$.
\begin{enumerate}[(i)]
\item\label{st-1} Take a subsolution $(w_{R}^{(1)},
\tilde{w}^{(1)}_{R})$ and a supersolution $(w^{(2)}_{R},
\tilde{w}^{(2)}_{R})$ to problem \eqref{e20c} (in a weak sense, in
agreement with Definition \ref{defsolc}). Assume that
$\tilde{w}^{(1)}_{R},\tilde{w}^{(2)}_{R} \geq 0$ a.e.\ in
$\Omega_R$, $w_{R}^{(1)} \ge 0$ a.e.\ in $B_R$, $w^{(2)}_{R}>0$
a.e.\ in $B_R$ and $\tilde{w}^{(1)}_{R}|_{\Sigma_R} \leq
\tilde{w}^{(2)}_{R}|_{\Sigma_R}$ a.e.\ in $\Sigma_R$. Then
$\tilde{w}^{(1)}_{R} \leq \tilde{w}^{(2)}_{R}$ a.e.\ in $\Omega_R$
and $w_{R}^{(1)} \leq w^{(2)}_{R}$ a.e.\ in $B_R$.
\item\label{st-2} Suppose in addition that $\rho\in
C^{\sigma}_{\rm loc}(\mathbb R^d)$ for some $\sigma \in (0,1)$.
Let $(w_R,\tilde{w}_R)$ be a weak solution to problem
\eqref{e20c}, in the sense of Definition \ref{defsolc}, such that
$ w_R \in L^\infty(B_R)$ and $\tilde{w}_R \in L^\infty(\Omega_R)$.
Then (in particular) $\tilde{w}_R \in C(\overline{\Omega}_r)$ for
all $ 0<r<R $ and either $(w_R,\tilde{w}_R) \equiv (0,0)$ or
$w_R>0 $ in $B_R$ and $\tilde{w}_R>0$ in $\Omega_R $.
\end{enumerate}
\end{pro}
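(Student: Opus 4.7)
For part (i), the plan is to rule out that $E:=\{\tilde w^{(1)}_R > \tilde w^{(2)}_R\}$ has positive measure by a Picone/Díaz--Saa-type argument adapted to the extension, crucially exploiting the sublinearity $\alpha<1$. I would test the subsolution form of \eqref{03011-4} (with ``$\le$'' on the extension side) against $\psi_1=[(\tilde w^{(1)}_R)^2-(\tilde w^{(2)}_R)^2]_+/\tilde w^{(1)}_R$ and the supersolution form (with ``$\ge$'') against $\psi_2=[(\tilde w^{(1)}_R)^2-(\tilde w^{(2)}_R)^2]_+/\tilde w^{(2)}_R$; both vanish on $\Sigma_R$ by the boundary assumption, hence lie in $X_0^s(\Omega_R)$ after a routine approximation. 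Subtracting, the difference of the $y^{1-2s}$-weighted Dirichlet integrals equals, on $E$, the pointwise quadratic form
\[
\Bigl(1+\tfrac{(\tilde w^{(2)}_R)^2}{(\tilde w^{(1)}_R)^2}\Bigr)|\nabla \tilde w^{(1)}_R|^2 + \Bigl(1+\tfrac{(\tilde w^{(1)}_R)^2}{(\tilde w^{(2)}_R)^2}\Bigr)|\nabla \tilde w^{(2)}_R|^2 - 2\Bigl(\tfrac{\tilde w^{(1)}_R}{\tilde w^{(2)}_R}+\tfrac{\tilde w^{(2)}_R}{\tilde w^{(1)}_R}\Bigr)\nabla \tilde w^{(1)}_R\cdot \nabla \tilde w^{(2)}_R,
\]
which is nonnegative as a quadratic form in $(\nabla \tilde w^{(1)}_R,\nabla \tilde w^{(2)}_R)$: a direct algebraic check shows that its discriminant vanishes, so it is in fact a perfect square. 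On the other hand, the traces $\psi_1|_{y=0}$, $\psi_2|_{y=0}$ give the boundary contribution
\[
\int_{B_R}\bigl[(w^{(1)}_R)^{\alpha-1}-(w^{(2)}_R)^{\alpha-1}\bigr]\bigl[(w^{(1)}_R)^2-(w^{(2)}_R)^2\bigr]_+ \rho \, \mathrm{d}x,
\]
which is $\le 0$, and strictly negative if $|E\cap \Gamma_R|>0$, simply because $\alpha-1<0$ on $\{w^{(1)}_R>w^{(2)}_R>0\}$. Combining the two sign informations with the sub/supersolution inequality yields $|E|=0$, and hence $\tilde w^{(1)}_R\le \tilde w^{(2)}_R$ a.e.\ in $\Omega_R$; $w^{(1)}_R\le w^{(2)}_R$ a.e.\ in $B_R$ then follows by taking traces.

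For part (ii), since $\rho\,w_R^\alpha\in L^\infty(B_R)$ by the boundedness of $w_R$ and $\rho$, and since $y^{1-2s}$ is a Muckenhoupt $A_2$ weight, the plan is to invoke the De Giorgi--Nash--Moser theory of Fabes--Kenig--Serapioni for degenerate elliptic operators in the interior of $\Omega_R$, complemented by its boundary counterpart for the oblique/Neumann problem as developed through the Caffarelli--Silvestre extension (see in particular \cite{CS1} and \cite{BCdP}), to deduce $\tilde w_R\in C(\overline{\Omega}_r)$ for every $r<R$; the $C^\sigma_{\rm loc}$ hypothesis on $\rho$ then permits a standard bootstrap. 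For the dichotomy I would apply Harnack's inequality and the strong maximum principle for $L_s$ in the connected open set $\Omega_R$: either $\tilde w_R\equiv 0$ in $\Omega_R$ (and then $w_R\equiv 0$ in $B_R$ by continuity, consistent with the boundary condition), or $\tilde w_R>0$ in $\Omega_R$, in which case a Hopf-type boundary lemma for $L_s$ (the weighted analogue of the classical one, exploiting subsolutions of the form $1-|x|^2/r^2$ corrected by a power of $y$) rules out $\tilde w_R(x,0)=0$ at any interior point of $\Gamma_R$, so that $w_R>0$ throughout $B_R$.

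\textbf{Main obstacle.} The delicate point of the plan is the rigorous justification of the test-function manipulation in (i) where $\tilde w^{(2)}_R$ (or $\tilde w^{(1)}_R$) vanishes: the functions $\psi_1,\psi_2$ must be made admissible in $X_0^s(\Omega_R)$ by first regularising the denominators ($\tilde w^{(i)}_R\rightsquigarrow \tilde w^{(i)}_R+\varepsilon$), verifying the algebraic Picone identity and the sign of the boundary integrand at the regularised level, and then passing to the limit $\varepsilon\to 0^+$ by Fatou's lemma, using the a.e.\ positivity of $w^{(2)}_R$ to control the boundary term. A secondary issue is the weighted Hopf lemma at $\Gamma_R$, which is slightly subtler than the classical one due to the degeneracy of $y^{1-2s}$ at $y=0$ but is by now well understood within the extension framework.
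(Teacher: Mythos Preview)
Your proposal is correct and follows essentially the same route as the paper. The paper's own proof is a two-line appeal to references: for (i) it cites \cite[Lemma 5.3]{BCdP} and the original Brezis--Kamin argument \cite{BK}, which is precisely the D\'iaz--Saa/Picone test-function trick you sketch (your $\varepsilon$-regularisation concern is exactly the care taken in \cite{BCdP}); for (ii) it invokes \cite[Lemma 4.5]{CS1} for interior H\"older regularity of $\tilde w_R$ and $\partial\tilde w_R/\partial y^{2s}$, and \cite[Corollary 4.12]{CS1} for the strict-positivity dichotomy, which packages the weighted Hopf lemma you describe. Your version is a faithful unpacking of those citations, with the minor cosmetic difference that you phrase the regularity step via Fabes--Kenig--Serapioni $A_2$ theory rather than the specific arguments of \cite{CS1}.
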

\begin{proof} Statement \eqref{st-1} follows by performing minor modifications to the proof of \cite[Lemma 5.3]{BCdP}. Actually the strategy of proof goes back to the pioneering paper \cite{BK}: let us mention that the strict positivity of the supersolution and the fact that the nonlinearity is sublinear are essential.
Statement \eqref{st-2} is due to the regularity results in
\cite{CS1}. In fact, since $(w_R,\tilde{w}_R)$ is bounded and $
\rho(x) $ and $ f(w):=w^\alpha $ are H\"{o}lder functions, Lemma
4.5 of \cite{CS1} ensures that $\tilde w_R$ and $ \frac{\partial
\tilde{w}_R }{\partial y^{2s}} $ are also H\"{o}lder continuous in
$\overline{\Omega}_r$ for all $ 0<r<R $. Corollary 4.12 of
\cite{CS1} then entails the assertion (the same argument works
upon replacing $ d(x)u(x) $ there with $ -\rho(x)u^\alpha(x) $).
\end{proof}

We are now in position to prove Theorem \ref{prthm8} as concerns the existence of a minimal local weak solution to \eqref{e20d}. The fact that such solution is also a very weak solution to \eqref{19021} (according to Definition \ref{defsol01-very-weak}) will be deduced in the end of the proof of Theorem \ref{thmab2} in Section \ref{as2}.

\begin{proof}[Proof of Theorem \ref{prthm8} (first part)]
For any $R>0$, by Proposition \ref{190213} we know that there exists a nontrivial solution $(w_R, \tilde w_R)$ to problem \eqref{e20c}.
Let now $(\chi_R, \tilde{\chi}_R)$ be the unique regular solution to the problem
\[
\begin{cases}
L_s \tilde \chi_R =0 & \textrm{in } \mathcal C_R \, , \\
\tilde \chi_R=0 & \textrm{on } \partial\mathcal C_R\cap \{y>0\} \, , \\
\tilde \chi_R=\chi_R & \textrm{on } \Gamma_R \, , \\
\displaystyle\frac{\partial \tilde\chi_R}{\partial y^{2s}} = \rho  & \textrm{on } \Gamma_R  \, ,
\end{cases}
\]
where $\mathcal C_R = B_R\times \{y>0 \}$. By standard results (see e.g.\ \cite{CKL}), we have:
\begin{equation}\label{e87}
\tilde \chi_R(x,y)= \int_{B_R} G_R((x,y),z) \, \rho(z) \mathrm{d}z \quad \forall (x,y) \in \mathcal C_R  \, ,
\end{equation}
where $G_R((x, y),z)$ (let $(x,y)\in \mathcal C_R$ and $ z\in B_R)$ is the Green function, namely the solution of
\[
\begin{cases}
L_s  G_R(\cdot, z) =0& \textrm{in } \mathcal C_R \, , \\
G_R(\cdot, z)=0 & \textrm{on } \partial\mathcal C_R\cap \{y>0\} \, , \\
\displaystyle\frac{\partial G_R(\cdot, z)}{\partial y^{2s}} = \delta_z & \textrm{on } \Gamma_R \, ,
\end{cases}
\]
for each $z\in B_R.$ It is well known that the Green functions are positive and ordered w.r.t.\ $R$, that is, if $R_1 \le R_2$ then
\begin{equation}\label{eq: func-red-order}
0 < G_{R_1} \leq G_{R_2} \quad \textrm{in } \overline{\mathcal{C}}_{R_1} \, .
\end{equation}
Furthermore, they are all bounded from above by the Green function $G_{+}$ for the half-space:
\begin{equation}\label{e88}
G_R((x,y),z) \leq G_{+}((x,y),z)\quad \forall (x,y) \in \overline{\mathcal{C}}_R \, , \ \forall z \in B_R \, , \ \forall R>0 \, ,
\end{equation}
where
\[
G_+((x,y),z) = \frac{k_{s,d}}{|((x-z), y)|^{d-2s}} \quad  \forall (x,y) \in \mathbb{R}^{d+1}_+ \, , \ \forall z \in \mathbb{R}^d
\]
(for the same constant $k_{s,d}$ appearing in \eqref{eq: def-riesz-pot}). The function $G_+$ solves
\[
\begin{cases}
L_s  G_+(\cdot, z) =0 & \textrm{in }  \mathbb{R}^{d+1}_+ \, , \\
\displaystyle \frac{\partial G_+(\cdot, z)}{\partial y^{2s}} = \delta_z & \textrm{on } \partial \mathbb{R}^{d+1}_+ \, ,
\end{cases}
\]
for each $z\in \mathbb{R}^d$ (see again \cite{CKL} and also \cite{DBVol}). From \eqref{eq: def-riesz-pot}, \eqref{e87} and \eqref{e88} it clearly follows that, for any $R>0$ and any $(x,y) \in \overline{\mathcal C}_R $,
\begin{equation}\label{e89}
\tilde{\chi}_R(x,y) \leq \int_{\mathbb R^d} G_+((x,y),z) \, \rho(z) {\rm d}z \le \int_{\mathbb R^d} G_+((x,0),z) \, \rho(z) {\rm d}z  = (I_{2s} \ast \rho)(x) \leq \left\| I_{2s}\ast \rho \right\|_\infty= \widehat{C}
\end{equation}
(for the last inequality, see Lemma \ref{l100}). Now note that, for any test function $\psi$ as in Definition \ref{defsolc}, we have:
\begin{equation}\label{e889}
{\mu_s}\int_{\Omega_R}  y^{1-2s} \langle \nabla{\tilde{\chi}_R} , \nabla{\psi} \rangle(x,y) \, \dd x \dd y = {\mu_s}\int_{\mathcal{C}_R}  y^{1-2s} \langle \nabla{\tilde{\chi}_R} , \nabla{\psi} \rangle(x,y) \, \dd x \dd y = \int_{B_R} \psi(x,0) \, \rho(x) {\rm d}x \, .
\end{equation}
If we choose any $\overline{C} \geq \widehat{C}^\frac{\alpha}{1-\alpha}$, then the function $(\overline{C}\chi_R , \overline{C}\tilde{\chi}_R)$ is a supersolution to problem \eqref{e20c}. In fact, thanks to \eqref{e89} and \eqref{e889}, in this case there holds
\[
{\mu_s}\int_{\Omega_R}  y^{1-2s} \left\langle \nabla{\left(\overline{C}\tilde{\chi}_R\right)} , \nabla{\psi} \right\rangle(x,y) \, \dd x \dd y  = \int_{B_R} \overline{C} \psi(x,0) \, \rho(x) {\rm d}x \geq \int_{B_R} \left[\overline{C}\chi_R(x)\right]^\alpha \psi(x,0) \, \rho(x) {\rm d}x
\]
for all \emph{nonnegative} $\psi$ as above. Hence, thanks to \eqref{e87} and \eqref{eq: func-red-order}, we are in position to apply the comparison principle provided by Proposition \ref{prop-CS1}-\eqref{st-1} with the choices $(w_{R}^{(1)}, \tilde{w}^{(1)}_{R})=(w_R,\tilde{w}_R)$ and $(w^{(2)}_{R}, \tilde{w}^{(2)}_{R})=(\overline{C}\chi_R , \overline{C}\tilde{\chi}_R)$, to get:
\begin{equation}\label{190219}
\tilde{w}_R \leq \overline{C} \tilde{\chi}_R \quad \textrm{a.e. in } \Omega_R \, ,
\end{equation}
and
\begin{equation}\label{190219-bis}
w_R \leq \overline{C} \chi_R \quad \textrm{a.e. in } B_R \, .
\end{equation}
In particular, by \eqref{e89}, \eqref{190219} and \eqref{190219-bis} we deduce that $w_R \in L^\infty(B_R)$ and $\tilde{w}_R \in L^\infty(\Omega_R)$. We can now exploit Proposition \ref{prop-CS1}-\eqref{st-2} and infer that
\begin{equation}\label{17042}
\tilde{w}_R > 0 \quad \textrm{in } \Omega_R
\end{equation}
and
\begin{equation}\label{17042-bis}
{w}_R > 0 \quad \textrm{in } B_R \, .
\end{equation}
Let $0<R_1<R_2$. The strict positivity, for all $R>0$, of $(w_R,\tilde{w}_R)$ given by \eqref{17042} and \eqref{17042-bis} allows us to apply again Proposition \ref{prop-CS1}-\eqref{st-1}, this time with the choices $(w_{R}^{(1)}, \tilde{w}^{(1)}_{R})=(w_{R_1},\tilde{w}_{R_1})$ and $(w^{(2)}_{R}, \tilde{w}^{(2)}_{R})=(w_{R_2}, \tilde{w}_{R_2})$, to get:
\begin{equation}\label{e90}
\tilde{w}_{R_1} \leq \tilde{w}_{R_2} \quad \textrm{in } \Omega_R  \, , \quad {w}_{R_1} \leq {w}_{R_2} \quad \textrm{in } B_R \quad \forall R_2>R_1>0 \, .
\end{equation}
We need to pass to the limit on $(w_R,\tilde{w}_R)$ as $R\to\infty$. Given any \emph{fixed} $\eta\in C^\infty_c(\mathbb{R}^{d+1}_+ \cup \partial \mathbb{R}^{d+1}_+)$, for every $R>0$ large enough we can pick (after approximation) $\psi=\tilde{w}_R\eta^2$ as a test function in Definition \ref{defsolc}. So, it is easily seen that
\begin{equation}\label{e1h}
\begin{aligned}
& \mu_s \int_{\Omega_R} y^{1-2s} \left|\nabla \tilde w_R(x,y) \right|^2 \eta^2 \, \dd x \dd y \\
\leq & 2 \left\| w_R \right\|_\infty^{\alpha+1} \int_{B_R} \eta^2(x,0) \, \rho(x) \dd x + 4 \, \mu_s \left\| \tilde{w}_R \right\|^2_\infty \int_{\Omega_R}  y^{1-2s}  \left|\nabla \eta(x,y) \right|^2 \dd x \dd y \, .
\end{aligned}
\end{equation}
From \eqref{e89}, \eqref{190219}, \eqref{190219-bis} and \eqref{e1h} we deduce that, for any $ \Omega_0 \Subset \mathbb{R}^{d+1}_+ \cup \partial \mathbb{R}^{d+1}_+ $, there holds
\begin{equation}\label{e2h}
\int_{\Omega_0} y^{1-2s} \left| \nabla \tilde{w}_R(x,y) \right|^2 \, \dd x \dd y \le K
\end{equation}
for a suitable positive constant $K$ independent of $R>0$. By collecting \eqref{e89}, \eqref{190219}, \eqref{190219-bis} and \eqref{e90}, we infer that there exist the following (nontrivial) pointwise limits:
\begin{equation}\label{e92}
\lim_{R \to \infty} \tilde w_R = {\tilde{w}} \in L^\infty(\mathbb{R}^{d+1}_+) \, ,  \quad \lim_{R \to \infty} w_R =  {w} \in L^\infty(\mathbb{R}^d) \, .
\end{equation}
Due to \eqref{e2h}, by standard compactness arguments we can pass to the limit in the weak formulation \eqref{03011-4} and infer that $ {w} $ is a local weak solution to \eqref{e20d} in the sense of Definition \ref{defsol01} (with local extension ${\tilde{w}}$).

Now we have to prove minimality. Hereafter, we shall denote by $ \underline{w} $ the solution constructed above and by $w$ any other nonidendically zero local weak solution to \eqref{e20d} (according to Definition \ref{defsol01}). In particular, for $R$ large enough $(w|_{B_R},\tilde{w}|_{\Omega_R})$ is a nontrivial solution to problem \eqref{e20c}, in the sense of Definition \ref{defsolc}, except that $\tilde{w}|_{\Omega_R}$ is not necessarily zero on $\Sigma_R$ (that is, $\tilde{w}$ has finite energy in $\Omega_R$ but does not belong to $X^s_0(\Omega_R)$). However, the regularity results of \cite{CS1} hold regardless of boundary conditions on $ \Sigma_R $: namely, Proposition \ref{prop-CS1}-\eqref{st-2} is applicable in this case as well, ensuring that $ w > 0 $ in $B_R$. Because $(w_R,\tilde{w}_R)$ is also a weak solution to \eqref{e20c} and, trivially, $\tilde{w}_R|_{\Sigma_R} \le \tilde{w}|_{\Sigma_R}$ on $\Sigma_R$, thanks to Proposition \ref{prop-CS1}-\eqref{st-1} (with the choices $(w_{R}^{(1)}, \tilde{w}^{(1)}_{R})=(w_R,\tilde{w}_R)$ and $(w_{R}^{(2)}, \tilde{w}^{(2)}_{R})=(w|_{B_R}, \tilde{w}|_{\Omega_R})$) we deduce
\[
w_R \leq  w|_{B_R}  \quad \textrm{in }  B_R \, ,
\]
whence $\underline{w}\le w $ in $\Gamma $ by letting $R \to \infty $, so that $\underline{w}$ is indeed minimal. The bound \eqref{e59} is then just a consequence of \eqref{e89}, \eqref{190219-bis} and \eqref{e92}.

From the above method of proof one can check that, under the more restrictive condition \eqref{eq: cond-ener}, then $\underline{w}$ is also a \emph{weak solution} to \eqref{19021} in the sense of Definition \ref{defsol01-weak-energy}. In fact, thanks to Lemma \ref{l100}, the inequalities \eqref{e89}, \eqref{190219-bis} and condition \eqref{eq: cond-ener} ensure that $\{ \| w_R^{\alpha+1} \|_{1,\rho} \}$ is uniformly bounded with respect to $R$. As a consequence, it is easy to verify that estimate \eqref{e2h} holds with $\Omega_0=\mathbb{R}^{d+1}_+ $ (up to setting $\tilde{w}_R= 0$ in $\Omega_R^c$). By passing to the limit as $R\to\infty $, this implies that $\underline{\tilde{w}} \in {X}^s$, $\underline{w} \in \dot{H}^s(\mathbb{R}^d)$, $\underline{\tilde{w}}=\D{E}(\underline{w}) $ and $\underline{w}$ satisfies \eqref{weak-ener}.

As already remarked, the fact that $\underline{w}$ is a \emph{very weak solution} to \eqref{19021} in the sense of Definition \ref{defsol01-very-weak} for all $\gamma>2s$ will be deduced at the end of the asymptotic analysis of Section \ref{as2} (see the proof of Theorem \ref{thmab2}).
\end{proof}

\subsection{Uniqueness}
In this section we prove our uniqueness result, stated in Theorem \ref{02034}, for solutions to \eqref{19021}. The strategy of proof strongly relies on the uniqueness result provided by Theorem \ref{prthm6} for solutions to \eqref{eq: regular}.
\begin{proof}[Proof of Theorem \ref{02034}]
Set $m= 1/\alpha$ and
\[
C_m = (m-1)^{-\frac{1}{m-1}} \, .
\]
For any $k \in \mathbb{N}$ let $\zeta_k\in C^\infty(\mathbb R^d)$ be such that $\zeta_k = 1$ in $B_k$, $\zeta_k = 0$ in $B^c_{2k}$ and $0\leq  \zeta_k \leq 1$ in $\mathbb{R}^d$. Take $R>2k$ and denote as $(v_{R,k}, \tilde{v}_{R,k})$ the unique strong solution to the following evolution problem (see Appendix \ref{rd} -- Part II):
\begin{equation}\label{e68}
\begin{cases}
L_s (\tilde{v}^m_{R, k} )= 0  & {\rm in } \ \Omega_R \times(0,\infty) \, , \\
\tilde v_{R, k} = 0 & {\rm on } \ \Sigma_R \times(0,\infty) \, , \\
\displaystyle \frac{\partial (\tilde{v}^m_{R, k})}{\partial y^{2s}} = \rho \displaystyle \frac{\partial v_{R, k}}{\partial t} & {\rm on } \  \Gamma_R \times(0,\infty) \, , \\
v_{R,k}= C_m \, \zeta_k w^{\frac1m} &  {\rm on } \ B_R \times \{t=0 \} \, .
\end{cases}
\end{equation}
Let $(w_R, \tilde w_R)$ be defined as in the proof of Theorem \ref{prthm8}. Since by hypothesis $w \in L^\infty(\mathbb{R}^d)$, thanks to \eqref{17042-bis} we can select a suitable $\tau_R>0$ so that
\begin{equation}\label{eq: dato-iniz-compare}
\frac{w_{R+1}^\frac{1}{m}}{\tau_R^{\frac{1}{m-1}}} \ge {w}^\frac{1}{m} \quad \textrm{in } B_R \, .
\end{equation}
We have:
\begin{equation}\label{eq: prima-dis}
\widetilde{U}_{R} = \frac{C_m{\tilde w_{R+1}}^\frac{1}{m}}{(t+\tau_R)^{\frac{1}{m-1}}} \leq \frac{C_m {\tilde w}_{R+1}^\frac{1}{m}}{t^{\frac{1}{m-1}}} = \widetilde{U}_{0R} \quad \textrm{in } \overline{\Omega}_R  \times (0, \infty) \, .
\end{equation}
Set ${U}_{R}(\cdot,t)=\widetilde{U}_{R}(\cdot,0,t)$ and ${U}_{0R}(\cdot,t)=\widetilde{U}_{0R}(\cdot,0,t)$, for each $t>0$. By definition of $({U}_{R},\widetilde{U}_{R})$ and recalling \eqref{eq: dato-iniz-compare}, we get that $({U}_{R},\widetilde{U}_{R})$ is a strong supersolution to \eqref{e68}. Hence, by the comparison principle stated in Proposition \ref{prop1cl} below and \eqref{eq: prima-dis}, we deduce:
\begin{equation}\label{04032}
{v}_{R,k} \leq {U}_{R} \leq {U}_{0R} \quad \textrm{a.e.\ in } B_R \times (0,\infty) \, .
\end{equation}
In addition to the above bounds we also have that, for any $k_2>k_1$ and $R>2 k_1$, there holds
\begin{equation}\label{04032-bis}
{v}_{R,k_1} \leq {v}_{R,k_2} \leq \frac{C_m {w}^\frac{1}{m}}{\left(t+1\right)^{\frac{1}{m-1}}} = V  \quad \textrm{a.e.\ in } B_R \times (0,\infty) \, .
\end{equation}
Such inequalities follow by noticing that $(V,\widetilde{V})$ is a strong supersolution to \eqref{e68} for all $R>0$ and $k \in \mathbb{N}$, while $({v}_{R,k_2},\tilde{v}_{R,k_2})$ is a strong supersolution to \eqref{e68} for $k=k_1$. One then applies again Proposition \ref{prop1cl}.

Since for each $k \in \mathbb{N} $ we have $C_m \zeta_k w^{\frac1m} \in L^1_\rho(\mathbb R^d)\cap L^\infty(\mathbb R^d)$, by standard arguments (e.g.\ similar to the ones
exploited in the proof of \cite[Theorem 3.1]{PT1}, see also Appendix \ref{rd} -- Part II) one sees that there exists the limit
\[
v_{\infty, k} = \lim_{R\to \infty} v_{R, k} \quad \textrm{a.e.\ in } \mathbb{R}^d
\]
and it is a solution of the problem
\[
\begin{cases}
\rho \, (v_{\infty,k})_t + (- \Delta)^{s}( v_{\infty,k}^m ) = 0  & \textrm{in } {\mathbb R}^d \times (0,\infty) \, , \\
v_{\infty,k}= C_m \, \zeta_k w^{\frac1m} & \textrm{on } \mathbb{R}^d \times \{ 0 \}  \, ,
\end{cases}
\]
both in the sense of Definition \ref{defsol1} and in the sense of Definition \ref{defsol05} below. Moreover, as a consequence of \eqref{04032-bis}, such limit satisfies the bounds
\begin{equation}\label{04032-ter}
{v}_{\infty,k_1} \le {v}_{\infty,k_2} \leq V  \quad \textrm{a.e.\ in } \mathbb{R}^d \times (0,\infty)
\end{equation}
for all $k_2 > k_1$. Thanks to \eqref{04032-ter} we get the existence of the pointwise limit
\begin{equation}\label{04032-qua}
v_{\infty} = \lim_{k\to \infty} v_{\infty, k}  \le V  \quad \textrm{a.e.\ in } \mathbb{R}^d \times (0,\infty) \, ;
\end{equation}
by passing to the limit in the very weak formulation solved by $v_{\infty,k}$ for all $k\in \mathbb N$, we infer that $v_\infty $ is a very weak solution, in the sense of Definition \ref{defsol05}, to the problem
\begin{equation}\label{04033b}
\begin{cases}
\rho\, (v_\infty)_t + (- \Delta)^{s}( v_\infty^m) = 0  & \textrm{in } {\mathbb R}^d\times(0,\infty) \, , \\
v_\infty = C_m w^\frac{1}{m} & \textrm{on } \mathbb{R}^d \times \{0\}  \, .
\end{cases}
\end{equation}
Now notice that $V$ is also a very weak solution to \eqref{04033b}. Because, by hypothesis, $w \in L^1_{(1+|x|)^{-\beta}}(\mathbb{R}^d)$, clearly $V^m \in L^1_{(1+|x|)^{-\beta}}(\mathbb{R}^d \times (0,T) )$. Hence, thanks to \eqref{04032-qua}, we deduce that also $v_\infty^m $ belongs to $L^1_{(1+|x|)^{-\beta}}(\mathbb{R}^d \times (0,T) )$. We are therefore in position to apply Theorem \ref{prthm6} (after Remark \ref{oss101}) and obtain
$$ v_\infty = V \quad \textrm{a.e.\ in } \mathbb{R}^d \times (0,\infty) \, . $$
Passing to the limit in \eqref{04032} (first as $R\to\infty$, then as $k\to\infty$) and using \eqref{e92}, we infer that
\[
v_\infty \le \frac{C_m {\underline{w}}^{\frac{1}{m}}}{t^{\frac{1}{m-1}}} \quad\textrm{a.e. in } \mathbb{R}^d \times (0, \infty) \, ,
\]
Hence,
\begin{equation}\label{eq: quasi-last-uniq}
\frac{w^\frac{1}{m}}{\underline{w}^\frac{1}{m}} \leq \frac{(t+1)^\frac{1}{m-1}}{t^\frac{1}{m-1}} \, ,
\end{equation}
and by letting $t \to \infty$ in \eqref{eq: quasi-last-uniq} we deduce
\[
w^\frac{1}{m} \leq \underline{w}^\frac{1}{m} \quad \textrm{a.e. in } \mathbb{R}^d  \, .
\]
Since $w$ is nontrivial and $\underline{w}$ is minimal, it follows that $w = \underline{w}$.
\end{proof}

\section{Asymptotic behaviour for rapidly decaying densities: proofs}\label{as2}
Before proving Theorem \ref{thmab2}, we need the following intermediate result, which gives a crucial bound from above for the solution to problem \eqref{eq: regular} provided by Theorem \ref{prthm6}.
\begin{lem}\label{lemab}
Under the same assumptions and with the same notations as in Theorem \ref{thmab2}, there holds
\begin{equation}\label{e67}
u(x,t) \leq (m-1)^{-\frac{1}{m-1}} \, t^{-\frac 1{m-1}}{w}^{\frac 1 m}(x) \quad \textrm{for a.e. } (x,t) \in \mathbb{R}^d \times (0,\infty) \, .
\end{equation}
\end{lem}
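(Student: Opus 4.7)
The core of the argument is the observation that the right-hand side of \eqref{e67} is a separated-variables solution of the PDE in \eqref{eq: regular}, with formally infinite initial datum. Indeed, set $C_m = (m-1)^{-1/(m-1)}$ and $\mathcal{U}(x,t) := C_m \, t^{-1/(m-1)} \, w^{1/m}(x)$. A direct computation using the fact that $w$ is a very weak solution of \eqref{19021} with $\alpha = 1/m$ (as provided by Theorem~\ref{prthm8}) gives
\[
\rho \, \mathcal{U}_t + (-\Delta)^s(\mathcal{U}^m) \;=\; t^{-m/(m-1)} \, \rho \, w^{1/m} \left( -\tfrac{C_m}{m-1} + C_m^m \right) \;=\; 0,
\]
since $C_m^m = (m-1)^{-m/(m-1)} = C_m/(m-1)$. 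Hence the claimed bound is, in essence, a comparison inequality between the minimal solution $u$ and a separated-variables solution with ``infinite'' initial data. The standard trick is to replace $\mathcal{U}$ by the family $\mathcal{U}_\epsilon(x,t) := C_m (t+\epsilon)^{-1/(m-1)} w^{1/m}(x)$, which is a bona fide solution on $\mathbb{R}^d\times(0,\infty)$ with a (finite) initial datum that blows up as $\epsilon \to 0^+$.

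The comparison will be performed on the bounded approximating problems already introduced in the proofs of Theorem~\ref{prthm8} and Theorem~\ref{02034}, then passed to the limit. First, I would approximate $w$ from below by the minimal solutions $w_R$ of \eqref{e20c} constructed in Section~\ref{sottosez-not}: by Proposition~\ref{prop-CS1}-\eqref{st-2}, each $w_R$ is continuous and strictly positive in $B_R$, and $w_R \uparrow w$ pointwise by \eqref{e92}. Second, following the Dirichlet-type approximation of Appendix~\ref{rd} (analogous to the solutions $v_{R,k}$ in \eqref{e68}), I would approximate $u$ from below by the minimal solutions $u_{R,k}$ of \eqref{eq: regular} in $B_R$, with zero fractional Dirichlet condition and initial datum $\min(u_0,k)\,\mathbf{1}_{B_{R/2}}$; standard arguments (as those recalled in the proof of Theorem~\ref{02034}) give $u_{R,k} \uparrow u$ a.e.\ in $\mathbb{R}^d\times(0,\infty)$ upon letting first $R\to\infty$ and then $k\to\infty$.

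Now, for each fixed $R$ and $k$, consider $\mathcal{U}_{R,\epsilon}(x,t) := C_m (t+\epsilon)^{-1/(m-1)} w_R^{1/m}(x)$, whose extension vanishes on $\Sigma_R$ (matching the boundary condition of $u_{R,k}$). Since $w_R$ is continuous and strictly positive on the compact set $\overline{B}_{R/2}$, it is bounded below there by some $c(R)>0$; hence, because $u_{R,k}(\cdot,0)$ is supported in $B_{R/2}$ and bounded by $k$, for all $\epsilon \le \epsilon_0(R,k)$ sufficiently small one has
\[
\mathcal{U}_{R,\epsilon}(x,0) \;=\; C_m \, \epsilon^{-1/(m-1)} \, w_R^{1/m}(x) \;\geq\; u_{R,k}(x,0) \qquad \text{for a.e.\ } x \in B_R.
\]
Applying the comparison principle for the Dirichlet problem on $B_R$ (Proposition~\ref{prop1cl}, invoked in the very same fashion as in the proof of Theorem~\ref{02034}) between the strong solution $u_{R,k}$ and the strong supersolution $\mathcal{U}_{R,\epsilon}$, I obtain $u_{R,k} \le \mathcal{U}_{R,\epsilon}$ a.e.\ in $B_R\times(0,\infty)$. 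Letting $\epsilon \to 0^+$ yields $u_{R,k}(x,t) \le C_m t^{-1/(m-1)} w_R^{1/m}(x)$; finally, letting $R\to\infty$ and $k\to\infty$ and using the monotone convergences $u_{R,k}\uparrow u$ and $w_R \uparrow w$ gives \eqref{e67}.

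The main technical obstacle is the last paragraph's application of comparison: because $\mathcal{U}$ itself has infinite initial datum, comparison cannot be applied directly between $u$ and $\mathcal{U}$, and this is precisely what forces the double approximation above (the spatial cut-off on $B_{R/2}$ is what guarantees $w_R^{1/m}$ is bounded below on the support of $u_{R,k}(\cdot,0)$, notwithstanding the boundary decay of $w_R$ on $\partial B_R$). A secondary bookkeeping point is that the approximation $u_{R,k}\uparrow u$, though natural, relies on the minimality statement of Theorem~\ref{prthm6} and on the monotone construction of the minimal solution carried out in Appendix~\ref{rd}: invoking those results is what ultimately identifies the limit with the minimal solution $u$ of the lemma.
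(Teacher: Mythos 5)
Your proof is correct and is essentially the same argument as the paper's: one builds the time-shifted separated-variables supersolution $C_m (t+\tau)^{-1/(m-1)} w_\ast^{1/m}$ from the Dirichlet approximations $w_\ast$ of $w$ on balls, applies the parabolic comparison principle (Proposition \ref{prop1cl}) on $\Omega_R$, and then passes to the limit $R\to\infty$ (followed by the approximation in the initial datum). The only bookkeeping difference is that the paper compares the approximate solution on $B_R$ against $w_{R+1}$, which by \eqref{17042-bis} is already bounded away from zero on $\overline B_R$, so no spatial truncation of $u_0$ is needed; you instead work with $w_R$ and compensate its boundary decay by shrinking the support of the truncated initial datum to $B_{R/2}$, which achieves the same effect.
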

We point out that an analogous result was proved in \cite{BV2} in bounded domains for $\rho\equiv 1$. Such result has then been applied in the study of the asymptotic behaviour of solutions in \cite{BSV}.

\begin{proof}
Suppose at first that $u_0\in L^1_\rho(\mathbb R^d)\cap L^\infty(\mathbb R^d)$. Let $C_m$, $(w_R, \tilde{w}_R)$ and $(U_R,\widetilde{U}_R)$ (for a suitable $\tau_R>0$ to be chosen later) be defined as in the proofs of Theorems \ref{prthm8} and \ref{02034}. For any $R>0$, let $(u_R, \tilde{u}_R)$ be the unique strong solution to the following evolution problem (see Appendix \ref{rd} -- Part II):
\begin{equation}\label{e70}
\begin{cases}
L_s ( \tilde{u}_R^m ) = 0  & {\rm in} \ \Omega_R \times(0,\infty) \, , \\
\tilde u_R=0 & {\rm on } \ \Sigma_R\times(0,\infty) \, , \\
 \displaystyle \frac{\partial \tilde{u}^m_R}{\partial y^{2s}} = \rho\displaystyle\frac{\partial u_R }{\partial t} & {\rm on } \  \Gamma_R \times (0,\infty) \, ,  \\
u_R = u_0  &  {\rm on } \ B_R \times \{ t=0 \} \, .
\end{cases}
\end{equation}
By standard arguments (see again the proof of \cite[Theorem 3.1]{PT1} and Appendix \ref{rd} -- Part II), we have that
\begin{equation}\label{e75}
\lim_{R \to \infty} u_R = u \quad \textrm{a.e.\ in } \mathbb{R}^d\times (0,\infty) \, , \quad \lim_{R \to \infty} \tilde{u}_R^m =\tilde{u}^m = \D{E}(u^m) \quad \textrm{a.e.\ in } \mathbb{R}^{d+1}_+ \times (0,\infty) \, ,
\end{equation}
where $u $ is the solution to \eqref{eq: regular} provided by Proposition \ref{prop2c}. Note that, thanks to \eqref{17042-bis}, for any $R>0$ there holds
\begin{equation}\label{eq: rel-inf}
\min_{\overline{B}_R} w_{R+1} > 0  \, .
\end{equation}
Hence, in view of \eqref{eq: rel-inf} and recalling that we assumed $u_0 \in L^1_\rho(\mathbb R^d)\cap L^\infty(\mathbb R^d)$, we can pick $\tau_R>0$ so that
\begin{equation}\label{e72}
\frac{C_m w_{R+1}^\frac{1}{m}}{\tau_R^{\frac{1}{m-1}}} \ge u_0
\quad\textrm{a.e.\ in } B_R \, .
\end{equation}
Due to \eqref{e72}, $(U_R, \widetilde{U}_R)$ is a strong supersolution to problem \eqref{e70}. Therefore, by comparison principles (see Proposition \ref{prop1cl} below),
\begin{equation}\label{e73}
u_R \leq U_R \quad \textrm{a.e.\ in } B_R \times (0,\infty) \, .
\end{equation}
Because trivially $U_R \le C_m t^{-\frac1{m-1}}w_{R+1}^{\frac 1 m}$, from \eqref{e73} we deduce the fundamental estimate
\begin{equation}\label{e74}
u_R \leq C_m \, t^{-\frac 1{m-1}} w_{R+1}^{\frac1m} \quad \textrm{a.e.\ in } B_R \times (0,\infty) \, .
\end{equation}
By letting $R \to \infty$ in \eqref{e74} and recalling \eqref{e92} and \eqref{e75}, we finally get \eqref{e67}.

Consider now general data $u_0 \in L^1_{\rho}(\mathbb{R}^d)$. In this case, we have that
$$ u =\lim_{n \to \infty} u_n \quad \textrm{a.e.\ in } \mathbb{R}^d \times (0,\infty) \, , $$
where for every $n \in \mathbb{N}$ we denote as $u_n$ the solution to problem \eqref{eq: regular} corresponding to the initial datum $u_{0n} \in L^1_\rho(\mathbb{R}^d)\cap L^\infty(\mathbb R^d)$, and the sequence $\{ u_{0n} \}$ is such that $0\leq u_{0n} \le u_0 $ in $\mathbb{R}^d $ for all $n \in \mathbb{N}$ and $ u_{0n}\to u_0$ in $L^1_\rho(\mathbb R^d)$ as $n\to\infty$ (see \cite[Section 6.2]{PT1} and Appendix \ref{rd} -- Parts I, II ). In view of the first part of the proof, we know that for every $n \in \mathbb{N}$ there holds
\begin{equation}\label{eq: lim-n}
u_n \leq C_m \, t^{-\frac 1{m-1}} {w}^{\frac 1 m} \quad \textrm{a.e.\ in } \mathbb{R}^d \times (0,\infty) \, .
\end{equation}
The assertion then follows by passing to the limit as $n\to \infty$ in \eqref{eq: lim-n}.
\end{proof}

\begin{oss}\label{remark-ext} \rm
As a consequence of the method of proof of Lemma \ref{lemab} we also get the validity of the estimate
\begin{equation}\label{e3h}
\D{E}(u^m) \leq  C_m^m \, t^{-\frac m{m-1}} \, \tilde{w}  \quad \textrm{a.e.\ in } \mathbb{R}^{d+1}_+ \times (0,\infty) \, ,
\end{equation}
where $\D{E}(u^m)$ is the extension of $u^m$ (see the beginning of Section \ref{subl}) and $\tilde{w}$ is the local extension of $w$, in agreement with Definition \ref{defsol01}, provided along the first part of the proof of Theorem \ref{prthm8}. In fact it is enough to notice that, by standard comparison principles for sub- and supersolutions to the problem $L_s=0$ in $\Omega_R$, from \eqref{e74} it follows that
\begin{equation}\label{e74-estese}
\tilde{u}_R^m \leq C_m^m \, t^{-\frac m{m-1}} \tilde{w}_{R+1} \quad \textrm{a.e.\ in } \Omega_R \times (0,\infty) \, ,
\end{equation}
whence \eqref{e3h} upon letting $R\to\infty$ in \eqref{e74-estese}.
\end{oss}

\begin{proof}[Proof of Theorem \ref{thmab2} and end of proof of Theorem \ref{prthm8}]
Let us denote as $v(x,\tau)$ the following rescaling of $u(x,t)$:
\begin{equation}\label{eq: def-v}
u(x,t)= e^{-\beta \tau} v(x, \tau) \, , \quad t= e^{\tau} \, , \quad \beta=\frac 1 {m-1} \, .
\end{equation}
It is immediate to check that $v$ is a (weak, and in particular very weak) solution to the equation
\[
\rho v_\tau = -(-\Delta)^s(v^m) + \beta \rho v \quad \textrm{in } \mathbb{R}^d \times(0,\infty) \, ,
\]
in the sense that
\begin{equation}\label{e81}
\begin{aligned}
& - \int_0^\infty \int_{\mathbb{R}^d}  v(x,\tau) \varphi_\tau(x,\tau) \, \rho(x) {\rm d}x {\rm d}\tau +\int_0^\infty \int_{\mathbb{R}^d} v^m(x,\tau) (-\Delta)^s(\varphi)(x,\tau) \, {\rm d}x {\rm d}\tau \\
= & \beta\int_0^\infty \int_{\mathbb{R}^d}  v(x,\tau) \varphi(x,\tau) \, \rho(x) {\rm d}x{\rm d}\tau + \int_{\mathbb{R}^d} u(x,1) \varphi(x,0) \, \rho(x) {\rm d}x
\end{aligned}
\end{equation}
for all $\varphi \in C^\infty_c(\mathbb{R}^d \times [0,\infty) )$. Moreover, $\D{E}(v^m) \in L^2_{\rm loc}((0,\infty); X^s)$ and
\begin{equation}\label{e81b}
\begin{aligned}
& - \int_0^T \int_{\mathbb{R}^d} v(x,\tau) {\psi}_\tau(x,0,\tau) \, \rho(x) {\rm d}x {\rm d}\tau + \mu_s \int_0^T \int_{\mathbb{R}^{d+1}_+} y^{1-2s} \left \langle \nabla{\D{E}(v^m)} , \nabla {\psi} \right\rangle(x,y,\tau) \, {\rm d}x {\rm d}y {\rm d}\tau \\
= & \beta \int_0^T \int_{\mathbb{R}^d}  v(x,\tau) {\psi}(x,0,\tau) \, \rho(x) {\rm d}x{\rm d}\tau
\end{aligned}
\end{equation}
for all $T>0$ and $ {\psi} \in C^\infty_c((\mathbb{R}^{d+1}_+ \cup \partial \mathbb{R}^{d+1}_+) \times (0,T))$. Thanks to Lemma \ref{lemab}, we have:
\begin{equation}\label{e78}
v(x,\tau) \leq C_m  w^{\frac 1 m}(x) \leq C_m \left\| w \right\|_{\infty}^{\frac 1 m} \quad \textrm{for a.e.\ } (x,\tau) \in \mathbb{R}^d \times (0,\infty) \, ;
\end{equation}
furthermore, recalling Remark \ref{remark-ext},
\begin{equation}\label{e78b}
\D{E}(v^m)(x, y, \tau) \leq C_m^m \tilde{w}(x,y) \leq C_m^m  \| \tilde{w} \|_{\infty} \quad \textrm{for a.e.\ } (x,y,\tau) \in \mathbb{R}^{d+1}_+ \times (0,\infty) \, .
\end{equation}
Now let us show that
\begin{equation}\label{e1l}
v(x,\tau_2) \ge v(x,\tau_1) \quad \textrm{for a.e. } x \in \mathbb{R}^d \, , \quad  \D{E}(v^m)(x,y,\tau_2) \ge \D{E}(v^m)(x,y,\tau_1) \quad \textrm{for a.e. } (x,y) \in \mathbb{R}^{d+1}_+
\end{equation}
for all $\tau_2 \ge \tau_1 >0 $. To this purpose, first of all note that, similarly  to \cite[p.\ 182]{Vaz07} (see also the original reference \cite{BC}), one can
prove the fundamental B\'enilan-Crandall inequality
\[
\rho u_t \ge - \frac{\rho u}{(m-1) t} \quad \textrm{a.e.\ in }
\mathbb{R}^d \times (0,\infty)
\]
which, recalling \eqref{eq: def-v}, implies that
\begin{equation}\label{e79}
v_\tau \geq 0 \quad \textrm{a.e.\ in } \mathbb{R}^d \times (0,\infty) \, .
\end{equation}
Thanks to \eqref{e79} we obtain the first inequality in
\eqref{e1l}, and therefore also the second one because the
extension operator is order preserving. Hence, by \eqref{e78},
\eqref{e78b} and \eqref{e1l} we infer that there exist finite the
limits
\begin{equation}\label{e106} h(x)
= \lim_{k \to \infty} v(x,\tau_k)  \quad \textrm{for a.e.\ } x \in
\mathbb{R}^d  \, , \quad H(x,y) = \lim_{k \to \infty} \D{E}
(v^m)(x,y,\tau_k) \quad \textrm{for a.e.\ } (x,y) \in
\mathbb{R}^{d+1}_+ \,,
\end{equation}
where $\{\tau_k\}$ is any time sequence tending to infinity. Moreover, since $u_0 \not \equiv
0$, \eqref{e1l} implies that $ h \not \equiv 0 $ and $ H\not\equiv
0 $, while \eqref{e78} and \eqref{e78b} ensure that $h \in
L^\infty(\mathbb{R}^d)$ and $ H \in L^\infty(\mathbb{R}^{d+1}_+)$.

Let us set
\begin{equation}\label{eq: def-funct-g}
g= C_m^{-m} h^m \, , \quad \tilde{g}= C_m^{-m} H \, .
\end{equation}
First we want to prove that $g$ (with the corresponding local extension $\tilde{g}$) is a solution to problem \eqref{e20d} (for $\alpha=1/m $) in the sense of Definition \ref{defsol01}. To this end, for any fixed $0 < \tau_1 < \tau_2 $ and $ 0<\epsilon<(\tau_2-\tau_1)/{2}$, let $\zeta_\epsilon(\tau) $ be a smooth approximation of
the function $\chi_{[\tau_1, \tau_2]}(\tau)$ such that
\[
0 \leq \zeta_\epsilon (\tau) \leq 1 \quad \forall \tau \ge 0 \, , \quad \zeta_\epsilon(\tau) = 0 \quad \forall \tau \not \in [\tau_1, \tau_2] \, , \quad \zeta_\epsilon(\tau) = 1  \quad \forall \tau \in [\tau_1+\epsilon, \tau_2-\epsilon] \, .
\]
Furthermore, we can and shall assume that
\[
\zeta_\epsilon^\prime (\tau) \to \delta(\tau-\tau_1) - \delta(\tau-\tau_2)
\]
as $\epsilon \to 0$. Consider now a cut-off function $\eta$ as in the first part of the proof of Theorem \ref{prthm8} and plug in the weak formulation \eqref{e81b} the test function ${\psi}=\zeta_\epsilon \eta^2 \D{E}(v^m)  $. Upon letting $\epsilon \to 0 $, we get:
\[
\begin{aligned}
& \frac{1}{m+1} \int_{\mathbb{R}^d} v^{m+1}(x,\tau_2) \, \eta^2(x,0) \, \rho(x) {\rm d}x \! + \! \mu_s \int_{\tau_1}^{\tau_2} \int_{\mathbb{R}^{d+1}_+} y^{1-2s} \left \langle \nabla{\D{E}(v^m)} , \nabla [\eta^2 \, \D{E}(v^m)] \right\rangle \! (x,y,\tau) \, {\rm d}x {\rm d}y {\rm d}\tau \\
= & \frac{1}{m+1} \int_{\mathbb{R}^d} v^{m+1}(x,\tau_1) \, \eta^2(x,0) \, \rho(x) {\rm d}x + \beta \int_{\tau_1}^{\tau_2} \int_{\mathbb{R}^d}  v^{m+1}(x,\tau) \, \eta^2(x,0) \, \rho(x) {\rm d}x{\rm d}\tau \, .
\end{aligned}
\]
Thanks to \eqref{e78} and \eqref{e78b}, by setting $\tau_1=\tau_k
$, $\tau_2=\tau_{k}+1 $ and proceeding as in the proof of
\eqref{e2h}, we obtain the estimate
\begin{equation}\label{e30c-energia-lic}
\int_{\tau_k}^{\tau_k+1} \int_{\Omega_0} y^{1-2s} \left| \nabla
\D{E}(v^m)(x,y,\tau)\right|^2 \dd x \dd y \dd \tau \leq C^\prime
\end{equation}
for any $\Omega_0 \Subset  {\mathbb{R}^{d+1}_+ \cup \partial \mathbb{R}^{d+1}_+} $ and a suitable constant $C^\prime>0$ independent of $k$. Take any function $\phi \in C_c^\infty(\mathbb{R}^{d+1}_+ \cup \partial \mathbb{R}^{d+1}_+)$. By plugging in \eqref{e81b} the test function ${\psi}(x,y,\tau)=\phi(x,y)\zeta_\epsilon(\tau)$ and letting $\epsilon \to 0$, we infer that
\begin{equation}\label{e81b-changed}
\begin{aligned}
&  \int_{\mathbb{R}^d} \left[ v(x,\tau_k+1)-v(x,\tau_k) \right]  \phi(x,0) \, \rho(x) {\rm d}x \\
 & + \mu_s \int_{\tau_k}^{\tau_k+1} \int_{\mathbb{R}^{d+1}_+}  y^{1-2s} \left \langle \nabla{\D{E}(v^m)}(x,y,\tau) , \nabla {\phi}(x,y) \right\rangle {\rm d}x {\rm d}y {\rm d}\tau \\
= & \beta \int_{\tau_k}^{\tau_k+1} \int_{\mathbb{R}^d} v(x,\tau)
\phi(x,0) \, \rho(x) {\rm d}x{\rm d}\tau \, .
\end{aligned}
\end{equation}
We point out that, still as a consequence of the time monotonicity
ensured by \eqref{e1l}, in addition to \eqref{e106} we also have
\begin{gather}\nonumber
h(x) = \lim_{k \to \infty} v(x,\tau_k+\lambda) \quad \textrm{for a.e.\ } (x,\lambda) \in \mathbb{R}^d \times (0,1) \, , \\
h(x) = \lim_{k \to \infty} v(x,\tau_k+1) \quad \textrm{for a.e.\ } x \in \mathbb{R}^d \, , \label{e106-general} \\
H(x,y) = \lim_{k \to \infty} \D{E} (v^m)(x,y,\tau_k+\lambda) \quad
\textrm{for a.e.\ } ((x,y),\lambda) \in \mathbb{R}^{d+1}_+ \times
(0,1) \, . \nonumber
\end{gather}
Gathering \eqref{e78}, \eqref{e78b}, \eqref{e106},
\eqref{e30c-energia-lic} and \eqref{e106-general}, we can pass to
the limit safely in \eqref{e81b-changed} to find that $h$ and $H$
satisfy
\[
\mu_s \int_{\mathbb{R}^{d+1}_+}  y^{1-2s} \langle \nabla{H} ,
\nabla {\phi} \rangle(x,y) \, {\rm d}x {\rm d}y = \beta
\int_{\mathbb{R}^d} h(x) \phi(x,0) \, \rho(x) {\rm d}x \, ,
\]
with $H(x,0)=h^m(x)$. That is, the function $g$ (with $\tilde{g}$
as a local extension) defined in \eqref{eq: def-funct-g} is a
local weak solution to \eqref{e20d} (for $\alpha=1/m$) in the sense
of Definition \ref{defsol01}. Furthermore, $g$ is also a very weak
solution to \eqref{19021} in the sense of Definition
\ref{defsol01-very-weak}. In order to prove the latter assertion,
we can proceed as above: for any $\phi \in
C^\infty_c(\mathbb{R}^d)$ plug in the weak formulation \eqref{e81}
the test function $\varphi(x,\tau)=\zeta_\epsilon(\tau) \phi(x)$
and let $\epsilon \to 0 $ to get
\begin{equation*}\label{e82}
\begin{aligned}
& \int_{\mathbb{R}^d}[v(x,\tau_k+1)-v(x,\tau_k)] \phi(x) \, \rho(x) {\rm d}x \\
= & \int_{\tau_k}^{\tau_k+1} \int_{\mathbb{R}^d}
\left[-v^m(x,\tau)(-\Delta)^s(\phi)(x) + \beta v(x,\tau) \phi(x)
\rho(x) \right] {\rm d}x {\rm d}\tau \, .
\end{aligned}
\end{equation*}
Passing to the limit as $ k \to \infty $ and using \eqref{e78},
\eqref{e106}, \eqref{e106-general}, we end up with
\begin{equation}\label{e83}
0= -\int_{\mathbb{R}^d} g(x) (-\Delta)^s(\phi)(x) \, {\rm d}x +
\int_{\mathbb{R}^d} g^{\frac1m}(x) \phi(x) \, \rho(x) {\rm d}x
\end{equation}
and the estimate
\begin{equation}\label{e78-limit}
g(x) \leq {w}(x) \quad \textrm{for a.e.\ } x \in \mathbb{R}^d \, .
\end{equation}
Since $g$ is a non-identically zero local weak solution to
\eqref{e20d}, the minimality of $w$ and \eqref{e78-limit}
necessarily imply that $g = w$. In particular, thanks to
\eqref{e83}, we can conclude the proof of Theorem \ref{prthm8} by
inferring that the minimal solution provided by it is also a very
weak solution to \eqref{19021} (for $\alpha=1/m$) in the sense of
Definition \ref{defsol01-very-weak}.

Finally, the convergence of $ \{
v(\tau_k) \} $ to $ C_m w^{1/m} $ in $ L^p_{\rm loc}(\mathbb R^d)
$ for $ p \in [1,\infty) $ is just a consequence of \eqref{e78}
and \eqref{e106}. The above arguments being independent of the
particular sequence $ \{ \tau_k \} $, the proof is complete.
\end{proof}

\section{Asymptotic behaviour for slowly decaying densities: proofs}\label{as1}
In order to prove Theorem \ref{thmab1}, we first need some preliminary results. 

\begin{lem}\label{lem: potential-basic-prop-1}
Let $d>2s$ and assume that $ \rho $ satisfies \eqref{eq: ass-rho-slow-bis} for some $ \gamma \ge 0 $ and $ \gamma_0 \in [0,2s) $. Let $ U_\rho^v $ be the Riesz potential of $ v \in L^1_{ \rho }(\mathbb{R}^d) \cap L^\infty(\mathbb{R}^d) $, that is
\[
U^{v}_\rho = I_{2s} \ast\left( \rho v \right) .
\]
Then $U^{v}_\rho $ belongs to $ C(\mathbb{R}^d) \cap L^p(\mathbb{R}^d) $ for all $p$ satisfying
\begin{equation}\label{eq: potential-v-Lp}
p \in \left( \frac{d}{d-2s} , \infty \right] .
\end{equation}
\end{lem}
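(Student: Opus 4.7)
The plan is to reduce the lemma to the integrability of $\rho v$ and then apply classical convolution estimates for the Riesz kernel $I_{2s}$. Since $v \in L^1_\rho(\mathbb{R}^d)$, one has $\rho v \in L^1(\mathbb{R}^d)$; combining $v \in L^\infty(\mathbb{R}^d)$ with the two-sided pointwise bound on $\rho$, one checks that $\rho v \in L^q(\mathbb{R}^d)$ for every $q \in [1, d/\gamma_0)$ (the full $[1,\infty]$ if $\gamma_0 = 0$). Indeed, on $B_1$ the function $|\rho v|^q$ is dominated by $\|v\|_\infty^q \, C^q |y|^{-\gamma_0 q}$, integrable iff $\gamma_0 q < d$, while on $B_1^c$ the bound $\rho \le C|y|^{-\gamma} \le C$ yields $\rho v \in L^1 \cap L^\infty$ in that region. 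The strict condition $\gamma_0 < 2s$ is central: it gives $d/\gamma_0 > d/(2s)$, so $\rho v$ lies in $L^q(\mathbb{R}^d)$ both for every $q \in [1, d/(2s))$ and for some $q > d/(2s)$.

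For each finite $p \in (d/(d-2s), \infty)$, I would apply Hardy--Littlewood--Sobolev to $\rho v \in L^q$ with $q \in (1, d/(2s))$ and $1/r = 1/q - 2s/d$, obtaining $U_\rho^v = I_{2s} \ast (\rho v) \in L^r(\mathbb{R}^d)$. As $q$ ranges over $(1, d/(2s))$, the exponent $r$ sweeps out the whole interval $(d/(d-2s), \infty)$, so every finite $p$ in the target range is covered.

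For the endpoint $p = \infty$, I would split the convolution according to whether $|x - y| \le 1$ or $|x - y| > 1$. The far piece is trivially controlled by $k_{s,d} \|\rho v\|_1$, independently of $x$. The near piece is bounded by $\|v\|_\infty \sup_x \int_{B_1(x)} \rho(y) |x - y|^{-(d-2s)} \, \dd y$, and it suffices to show that this supremum is finite. For $|x| \ge 2$ one has $\rho \le C$ on $B_1(x)$ and the integral reduces to a standard harmless Riesz integral; for $|x| < 2$ the only delicate region is the one in which $\rho(y) \sim |y|^{-\gamma_0}$, but a H\"older estimate with conjugate exponents $p_1 < d/(2s)$ and $p_2 < d/(d - 2s)$, admissible precisely because $\gamma_0 < 2s$, gives a bound independent of $x$.

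Continuity of $U_\rho^v$ then follows from the observation made in the first paragraph that $\rho v \in L^q(\mathbb{R}^d)$ for some $q > d/(2s)$: standard H\"older estimates for Riesz potentials with sources in such $L^q$ give $I_{2s} \ast (\rho v) \in C^{0, 2s - d/q}(\mathbb{R}^d)$, hence in $C(\mathbb{R}^d)$. I expect the main obstacle to be the $p = \infty$ endpoint, which Hardy--Littlewood--Sobolev does not reach and which forces the by-hand balancing described above of the two singularities of $\rho$ and $I_{2s}$ near the origin; the subcritical condition $\gamma_0 < 2s$ is precisely what keeps $\gamma_0 + (d - 2s) < d$ and makes the uniform bound possible.
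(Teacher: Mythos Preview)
The paper does not actually prove this lemma; it simply refers the reader to \cite[Lemma 4.8]{GMP2}. Your argument is the natural one and is almost certainly what that reference does: establish $\rho v\in L^q(\mathbb{R}^d)$ for $q\in[1,d/\gamma_0)$, invoke Hardy--Littlewood--Sobolev for the finite exponents, handle $p=\infty$ and continuity separately via the hypothesis $\gamma_0<2s$.

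There is one genuine slip in your $L^\infty$ step. You write ``conjugate exponents $p_1<d/(2s)$ and $p_2<d/(d-2s)$'', but $d/(2s)$ and $d/(d-2s)$ are themselves conjugate, so those two strict inequalities are incompatible. What you need is $p_2<d/(d-2s)$ so that $|x-\cdot|^{-(d-2s)}\in L^{p_2}(B_1)$; this forces $p_1>d/(2s)$. The point that saves you is that $\gamma_0<2s$ gives $d/\gamma_0>d/(2s)$, so there is room to pick $p_1$ slightly above $d/(2s)$ while keeping $p_1<d/\gamma_0$, which is exactly the condition for $|\cdot|^{-\gamma_0}\in L^{p_1}(B_1)$. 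With this correction the estimate goes through, and your final remark that ``$\gamma_0<2s$ is precisely what keeps $\gamma_0+(d-2s)<d$'' is the right way to see why the two singularities can be balanced. The rest of the argument is fine.
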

\begin{proof}
We refer the reader to \cite[Lemma 4.8]{GMP2}.
\end{proof}

Let $u$ be a weak solution to problem \eqref{eq:
measure}, according to Definition \ref{defsol1}. For any
$\lambda>0$, set
\begin{equation}\label{eq: def-sol-riscalata}
u_{\lambda}(x,t)=\lambda^\alpha u (\lambda^\kappa x, \lambda t)
\quad \forall (x,t) \in \mathbb{R}^d \times (0,\infty) \, ,
\end{equation}
where $\alpha,\kappa $ are defined in \eqref{alpha-kappa}. Notice
that \eqref{eq: def-sol-riscalata} is the same scaling under which
$ u^{c_\infty}_M $ is invariant (see Section \ref{sect:
sect-bar}).

\begin{pro}\label{prorisc} Let the assumptions of Theorem \ref{thmab1} hold true, with in addition $ u_0 \in L^1_\rho(\mathbb{R}^d) \cap L^\infty(\mathbb{R}^d) $.
Then, for any sequence $\lambda_n \to \infty $, $\{
u_{\lambda_{n}} \} $ converges to $u^{c_\infty}_M$ almost
everywhere in $\mathbb{R}^d \times (0,\infty) $ along
subsequences.
\end{pro}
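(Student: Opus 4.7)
The plan is to exploit the natural self-similar scaling of the equation and run a compactness-and-identification argument in the spirit of \cite{Vaz03,RV08,VazConstr}, adapted to the nonlocal weighted setting via the techniques of \cite{GMP2}.

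\textbf{First step (scaling).} I would verify by a direct computation that, with the parameters $\alpha,\kappa$ chosen as in \eqref{alpha-kappa}, the rescaled function $u_\lambda$ solves
\[
\rho_\lambda(x)\,\partial_t u_\lambda + (-\Delta)^s(u_\lambda^m) = 0 \qquad \textrm{in } \mathbb{R}^d \times (0,\infty) \, ,
\]
where $\rho_\lambda(x) := \lambda^{\gamma\kappa}\rho(\lambda^\kappa x)$; the very definition of $\kappa$ is what makes this identity consistent. Assumption \eqref{eq: ass-rho-slow-lim} then gives $\rho_\lambda(x) \to c_\infty|x|^{-\gamma}$ pointwise on $\mathbb{R}^d\setminus\{0\}$, while \eqref{eq: ass-rho-slow-bis} provides the $\lambda$-independent two-sided bound $c|x|^{-\gamma}\leq\rho_\lambda(x)\leq C|x|^{-\gamma}$ on any annulus $\{r\leq |x|\leq R\}$ for $\lambda$ sufficiently large (depending on $r$). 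A change of variable shows that $\int_{\mathbb{R}^d}\rho_\lambda u_\lambda(\cdot,0)\,\mathrm{d}x=M$, and for every $\phi\in C_b(\mathbb{R}^d)$ one has $\int \phi\,\rho_\lambda u_\lambda(\cdot,0) = \int\phi(\lambda^{-\kappa}y)\rho(y)u_0(y)\,\mathrm{d}y \to M\phi(0)$, hence $\rho_\lambda u_\lambda(\cdot,0) \to M\delta$ in $\sigma(\mathcal{M}(\mathbb{R}^d),C_b(\mathbb{R}^d))$.

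\textbf{Second step (uniform estimates).} The smoothing estimate \eqref{eq: smoothing-effect-general}, rewritten in terms of $u_\lambda$, yields $\|u_\lambda(t)\|_\infty\leq K t^{-\alpha}M^\beta$, with $K$ independent of $\lambda$ because the constant in \eqref{eq: smoothing-effect-general} depends only on the bounds \eqref{eq: ass-rho-slow-bis} (which $\rho_\lambda$ shares uniformly). The energy identity \eqref{eq: prima-esistenza-energy-1-teorema} and the time-derivative estimate \eqref{eq: prima-esistenza-energy-2-teorema}, rescaled, give uniform-in-$\lambda$ bounds on $(-\Delta)^{s/2}(u_\lambda^m)$ in $L^2_{\rm loc}((0,\infty);L^2(\mathbb{R}^d))$ and on $\partial_t u_\lambda^{(m+1)/2}$ in the corresponding $\rho_\lambda$-weighted local $L^2$ space. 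Combining the $L^\infty$ bound, the spatial fractional Sobolev regularity of $u_\lambda^m$, and the time equicontinuity, an Aubin--Lions/Fr\'echet--Kolmogorov type argument produces a subsequence (still denoted by $\{\lambda_n\}$) such that $u_{\lambda_n}\to u^*$ strongly in $L^p_{\rm loc}(\mathbb{R}^d\times(0,\infty))$ for every $p\in[1,\infty)$ and almost everywhere in $\mathbb{R}^d\times(0,\infty)$.

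\textbf{Third step (identification).} I would pass to the limit in the weak formulation \eqref{e25} with $\rho$ replaced by $\rho_\lambda$. The parabolic term $\int u_\lambda\varphi_t\rho_\lambda$ passes via dominated convergence, thanks to the a.e.\ convergence of $u_\lambda$ and $\rho_\lambda$ together with the $\lambda$-uniform bounds on the support of $\varphi$; the nonlocal term $\int (-\Delta)^{s/2}(u_\lambda^m)(-\Delta)^{s/2}(\varphi)$ passes by weak $L^2$ convergence of $(-\Delta)^{s/2}(u_\lambda^m)$. Combined with the concentration of $\rho_\lambda u_\lambda(\cdot,0)$ at the origin and conservation of mass, this shows that $u^*$ is a weak solution to \eqref{e10} in the sense of Definition \ref{defsol2-barenblatt}. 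The uniqueness part of Theorem \ref{thm: teorema-esistenza}, applied to problem \eqref{e10}, then forces $u^*=u^{c_\infty}_M$, and since the limit is independent of the extracted subsequence the conclusion of the proposition follows.

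\textbf{The main obstacle} will be the compactness step under the $\lambda$-dependent weight $\rho_\lambda$, which concentrates at the origin and may blow up there as $\lambda\to\infty$: the time-regularity afforded by \eqref{eq: prima-esistenza-energy-2-teorema} is naturally measured in $L^2_{\rho_\lambda}$, so a direct black-box application of Aubin--Lions is not available. I would circumvent this by localizing the compactness argument on annuli $\{r\leq|x|\leq R\}$, where $\rho_\lambda$ is uniformly comparable to $|x|^{-\gamma}$, and controlling the behaviour near $\{0\}$ through $\int_{|x|<r}\rho_\lambda u_\lambda\,\mathrm{d}x\to 0$ uniformly in $\lambda$ (as $r\to 0$, via mass conservation and the $L^\infty$ bound). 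A parallel delicacy is the identification of the initial trace as the Dirac mass $M\delta$, but this is exactly what the concentration of $\rho_\lambda u_\lambda(\cdot,0)$ computed in the first step is designed to produce.
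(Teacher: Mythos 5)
Your first and second steps match the paper's Claim~1: the scaling computation, the uniform bounds on $\rho_\lambda$, the $\lambda$-uniform smoothing and energy estimates, and the resulting a.e.\ pointwise limit $u^*$ along a subsequence, with passage to the limit in the weak formulation \eqref{e25} away from $t=0$ via the pointwise convergence $\rho_\lambda \to |x|^{-\gamma}$. That part is sound, and your worry about the $\lambda$-dependent weight is real but minor; the paper dispatches it through local compactness on annuli exactly as you suggest.

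The genuine gap is in the third step, at the identification of the initial trace. You compute (correctly) that $\rho_\lambda u_{0\lambda} = \rho_\lambda u_\lambda(\cdot,0) \to M\delta$ in $\sigma(\mathcal{M}(\mathbb{R}^d),C_b(\mathbb{R}^d))$ as $\lambda\to\infty$, and you then assert that this, combined with mass conservation, shows $u^*$ has initial trace $M\delta$. But that is a statement about $u_\lambda$ at fixed $t=0$ for varying $\lambda$; what Definition~\ref{defsol2-barenblatt} demands is that the \emph{limit function} $u^*$ satisfy $\lim_{t\to 0^+} |x|^{-\gamma}u^*(t) = M\delta/c_\infty$, an assertion in the $t\to 0$ limit of the $\lambda\to\infty$ limit. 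These two iterated limits do not commute for free: the a.e.\ convergence you extract in the second step holds on $\mathbb{R}^d\times(0,\infty)$ and says nothing about $t=0$, and you cannot take test functions that do not vanish near $t=0$ in \eqref{e25} and pass $\lambda\to\infty$, because the energy estimate \eqref{eq: prima-esistenza-energy-1-teorema} and the $L^\infty$ bound degenerate as $t\to 0$ (both blow up like a negative power of $t$). This is precisely the point where the paper introduces the Riesz-potential machinery (Claims~2 through 5 of its proof): setting $U_\lambda(t) = I_{2s}\ast(\rho_\lambda u_\lambda(t))$, one derives the identity \eqref{eq: u-tempo-lambda}, from which \eqref{eq: u-tempo-lambda-bis} gives
\[
\Bigl| \textstyle\int U_\lambda(t_2)\phi - \int U_{0\lambda}\phi \Bigr| \le \bigl\| \rho_\lambda^{-1}\phi \bigr\|_\infty K^{m-1}M^{1+\beta(m-1)} \int_0^{t_2} t^{-\alpha(m-1)}\,\mathrm{d}t \, ,
\]
a bound that is uniform in $\lambda$ (by \eqref{eq: bounds-rho-pre}) and tends to $0$ as $t_2\to 0$, since $\alpha(m-1)<1$. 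Letting $\lambda\to\infty$ yields the analogous estimate for $U^* = I_{2s}\ast(|x|^{-\gamma}u^*)$ with $U_{0\lambda}$ replaced by $M I_{2s}$, and it is this \emph{uniform-in-$t$ and $\lambda$} control on the potential, not the $t=0$ convergence of $\rho_\lambda u_{0\lambda}$ alone, that lets one conclude $|x|^{-\gamma}u^*(t)\to M\delta$ as $t\to 0$ (Claim~5). Once that is in hand, uniqueness from Theorem~\ref{thm: teorema-esistenza} finishes the proof as you indicate. Your proposal flags the initial-trace issue but misjudges which step resolves it; the first step does not, and some quantitative transfer argument --- in the paper, the Riesz potential computation --- is indispensable.
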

\begin{proof}
For notational simplicity, and with no loss of generality, we shall put $c_\infty=1$. Here we shall not give a fully detailed proof, since the procedure
follows closely the one performed in the proof of \cite[Theorem
3.2]{GMP2}. To begin with, note that $u_\lambda$ solves the problem
\begin{equation}\label{e14}
\begin{cases}
 \rho_\lambda \, u_t + (-\Delta)^s (u_\lambda^m) = 0  &  \textrm{in } \mathbb{R}^d \times (0,\infty) \, , \\
 u_\lambda = u_{0\lambda}  &  \textrm{on } \mathbb{R}^d \times \{0\}  \, ,
\end{cases}
\end{equation}
where
\begin{equation}\label{eq: rho-lambda-u_0-lambda}
\rho_\lambda(x)=\lambda^{\kappa \gamma}\rho(\lambda^\kappa x) \, ,
\quad u_{0\lambda}(x)=\lambda^\alpha u_0(\lambda^\kappa x) \quad
\forall x\in \mathbb{R}^d \,.
\end{equation}
It is easily seen that (recall the conservation of mass \eqref{eq:
cons-mass})
\begin{equation}\label{e15}
\left\|u_{\lambda}(t)\right\|_{1,\rho_\lambda
}=\left\|u_{0\lambda}\right\|_{1,\rho_\lambda}=M \quad  \forall
t,\lambda>0 \, .
\end{equation}
\noindent {\bf Claim 1:} {\it There exists a subsequence
$\{u_{\lambda_m}\} \subset \{u_{\lambda_n}\}$ that converges
pointwise a.e.\ in $\mathbb{R}^d \times (0,\infty)$ to some
function $u$, which satisfies \eqref{e23}, \eqref{e24} and
\eqref{e25} with $ \rho(x)=|x|^{-\gamma} $}.

\smallskip

\noindent First of all observe that, in view of \eqref{eq: ass-rho-slow-bis},
\begin{equation}\label{eq: bounds-rho-pre}
\frac{c}{1+|x|^\gamma} \le  \rho_\lambda(x) \le
\frac{C}{|x|^\gamma} \quad \forall \lambda \ge 1 \, .
\end{equation}
Combining the smoothing effect \eqref{eq:
smoothing-effect-general} with \eqref{e15}, we obtain:
\begin{equation}\label{e18}
\|u_\lambda(t)\|_{\infty}  \le  K \, t^{-\alpha} M^{\beta} \quad
\forall t,\lambda >0 \, ,
\end{equation}
where $K>0$ is a constant depending only on $ m $, $\gamma$,
$d$, $ s $ and $C$. In particular,
\begin{equation}\label{e19}
\int_{\mathbb{R}^d} u_\lambda^{m+1}(x,t) \, \rho_\lambda(x)
\mathrm{d}x \leq K^m \, t^{-\alpha m}M^{\beta m+1} \quad  \forall
t, \lambda>0 \, .
\end{equation}
By \eqref{eq: prima-esistenza-energy-1-teorema} and \eqref{e19},
we infer that
\begin{equation}\label{e20}
\int_{t_1}^{t_2} \int_{\mathbb{R}^d} \left|(-\Delta)^{\frac s
2}(u_\lambda^m)(x,t)\right|^2 \mathrm{d}x \mathrm{d}t + \frac
1{m+1}\int_{\mathbb{R}^d}u_\lambda^{m+1}(x,t_2) \, \rho_\lambda(x)
\mathrm{d}x \le \frac{K^m}{m+1} \, t_1^{-\alpha m}M^{\beta m+1}
\end{equation}
for all $\lambda>0$ and all $t_2>t_1>0$. On the other hand, thanks
to \eqref{eq: prima-esistenza-energy-2-teorema},
\begin{equation}\label{e21}
\int_{t_1}^{t_2}\int_{\mathbb{R}^d}\left|(z_\lambda)_t(x,t)\right|^2
\rho_\lambda(x) \mathrm{d}x\mathrm{d}t \leq  C^\prime \quad
\forall t_2>t_1>0 \, , \ \ \forall \lambda>0 \, ,
\end{equation}
where $z_\lambda:=u_\lambda^{(m+1)/2}$ and $ C^\prime $ is another
positive constant depending on $m$, $\gamma$, $ d $, $s$, $t_1$, $
t_2 $, $K$, $M$ but independent of $\lambda$. In view of
\eqref{e15}--\eqref{e21}, by standard compactness arguments (see
the proof of Theorem \ref{thm: teorema-esistenza}) the sequence
$\{ u_{\lambda_n} \}$ admits a subsequence $\{u_{\lambda_m}\}$
converging a.e.\ in $\mathbb{R}^d\times (0,\infty) $ to some
function $u$ that complies with \eqref{e23} and \eqref{e24}.
Moreover, recalling the assumptions on $\rho$, we have that
\eqref{eq: bounds-rho-pre} holds true and
\begin{equation}\label{e26b}
\lim_{\lambda\to\infty}\rho_{\lambda}(x)=|x|^{-\gamma} \quad
\textrm{for a.e. } x \in \mathbb{R}^d \, .
\end{equation}
We are therefore allowed to pass to the limit in the weak
formulation solved by $ u_{\lambda_m} $ to find that $u$ also
satisfies \eqref{e25}, and Claim 1 is proved. In order to deal
with the initial trace of $ u $, it is convenient to introduce the
Riesz potential $ U_\lambda $ of $ \rho_\lambda u_\lambda $, that
is $ U_\lambda(x,t) := [I_{2s} \ast \rho_\lambda u_\lambda(t)](x)
$.

\smallskip

\noindent{\bf Claim 2:} \emph{For any $\lambda>0$, the function
$U_\lambda$ satisfies
\begin{equation}\label{eq: u-tempo-lambda}
\int_{\mathbb{R}^d} U_\lambda(x,t_2) \phi(x) \, \mathrm{d}x -
\int_{\mathbb{R}^d} U_\lambda(x,t_1) \phi(x) \, \mathrm{d}x = -
\int_{\mathbb{R}^d} \left(\int_{t_1}^{t_2} u^m_\lambda(x,t) \,
\mathrm{d}t \right) \phi(x) \, \mathrm{d}x
\end{equation}
for all $ t_2>t_1>0 $ and $ \phi \in \mathcal{D}(\mathbb{R}^d) $.}

\smallskip

\noindent In order to prove \eqref{eq: u-tempo-lambda} rigorously,
one proceeds exactly as in the proof of \cite[Theorem 3.2]{GMP2}.
Note however that, formally, $
(-\Delta)^s(U_\lambda)(t)=\rho_\lambda u_\lambda(t)$, so that
\eqref{eq: u-tempo-lambda}, still at a formal level, just follows
by applying the operator $(-\Delta)^{-s} $ to both sides of the
differential equation in \eqref{e14}.

\smallskip

\noindent{\bf Claim 3:} \emph{For any $ \lambda>0 $, let
$U_{0\lambda}:= I_{2s} \ast \rho_\lambda u_{0\lambda} $. Then,
\begin{equation}\label{eq: u-tempo-lambda-bis}
\left| \int_{\mathbb{R}^d} U_\lambda(x,t_2) \phi(x) \, \mathrm{d}x
- \int_{\mathbb{R}^d} U_{0\lambda}(x) \phi(x) \, \mathrm{d}x
\right| \le \left\| \rho_\lambda^{-1} \phi \right\|_\infty
K^{m-1} \, M^{1+\beta(m-1)} \int_{0}^{t_2} t^{-\alpha(m-1)}
\mathrm{d}t
\end{equation}
for all $ t_2>0 $ and $ \phi \in \mathcal{D}(\mathbb{R}^d) $.}

\smallskip

\noindent The validity of \eqref{eq: u-tempo-lambda-bis} is just a
consequence of \eqref{e15}, \eqref{e18}, \eqref{eq:
u-tempo-lambda} and of the definition of weak solution.

\smallskip

\noindent{\bf Claim 4:} \emph{The potential $U$ of $ |x|^{-\gamma}
u$, that is $ U(x,t) := [I_{2s} \ast |\cdot|^{-\gamma}
u(\cdot,t)](x) $, satisfies
\begin{equation}\label{eq: u-tempo-lambda-ter}
\begin{aligned}
& \left| \int_{\mathbb{R}^d} U(x,t_2) \phi(x) \, \mathrm{d}x - \int_{\mathbb{R}^d} M I_{2s}(x) \phi(x) \, \mathrm{d}x \right| \\
\le & c^{-1} \left\| (1+|x|^\gamma) \phi \right\|_\infty  K^{m-1}
\, M^{1+\beta(m-1)} \int_{0}^{t_2} t^{-\alpha(m-1)} \mathrm{d}t
\end{aligned}
\end{equation}
for a.e.\ $ t_2>0 $ and $ \phi \in \mathcal{D}(\mathbb{R}^d) $.}

\smallskip

\noindent Our goal is to let $ \lambda \to \infty $ in \eqref{eq:
u-tempo-lambda-bis}. In the r.h.s.\ we just exploit \eqref{eq:
bounds-rho-pre}. Thanks to Claim 1, \eqref{e15}, \eqref{eq:
bounds-rho-pre}, \eqref{e18} and \eqref{e26b} we infer that
\[
\lim_{m \to \infty} \rho_{\lambda_m} u_{\lambda_m}(t) =
|x|^{-\gamma} u(t) \ \ \ \textrm{in } \sigma(\mathcal{M}
(\mathbb{R}^d),C_0(\mathbb{R}^d))
\]
for a.e.\ $t>0$. This is enough in order to pass to the limit in
the first integral in the l.h.s.\ of \eqref{eq:
u-tempo-lambda-bis}. The same holds true for the second integral,
provided we can prove that $ \{ \rho_\lambda u_{0\lambda} \} $
tends to $ M \delta $ e.g.~in $
\sigma(\mathcal{M}(\mathbb{R}^d),C_b(\mathbb{R}^d)) $ as $\lambda
\to \infty $. This is indeed the case: in fact, $\|\rho_\lambda
u_{0\lambda} \|_{1}=M$ and for any $ \phi \in C_c(\mathbb{R}^d) $
one has
\[
\begin{aligned}
& \lim_{\lambda\to\infty} \int_{\mathbb{R}^d} \phi(x) \rho_\lambda(x) u_{0\lambda}(x) \, \mathrm{d}x \\
= & \lim_{\lambda\to\infty} \lambda^{\alpha+\kappa \gamma}
\int_{\mathbb{R}^d} \phi(x) \rho(\lambda^\kappa x)
u_0(\lambda^\kappa x) \, \mathrm{d}x = \lim_{\lambda\to\infty}
\int_{\mathbb{R}^d} \phi({y}/{\lambda^\kappa}) u_0(y) \,  \rho(y)
\mathrm{d}y = M \phi(0) \, .
\end{aligned}
\]

\smallskip

\noindent

\noindent{\bf Claim 5:} \emph{There holds
\begin{equation}\label{e53}
\lim_{t \to 0} |x|^{-\gamma}u(t) = M \delta \ \ \ \textrm{in} \
\sigma(\mathcal{M}(\mathbb{R}^d),C_b(\mathbb{R}^d)) \, .
\end{equation}}

\smallskip

\noindent Passing to the limit in \eqref{e15} as
$\lambda=\lambda_m \to \infty $ we get
\begin{equation}\label{e51}
\| |x|^{-\gamma} u(t) \|_{1} \le M \ \ \ \textrm{for a.e.\ } t>0
\, .
\end{equation}
Estimate \eqref{e51} implies, in particular, that $|x|^{-\gamma}
u(t)$ converges, up to subsequences, to some positive finite Radon
measure $\nu$ in
$\sigma(\mathcal{M}(\mathbb{R}^d),C_c(\mathbb{R}^d))$ as $t \to
0$. In view of \eqref{eq: u-tempo-lambda-ter} we know that $ U(t)
$ converges to $ M I_{2s} = I_{2s} \ast M\delta $ e.g.~in $
L^1_{\rm loc}(\mathbb{R}^d) $ as $t \to 0$, which entails $ \nu =
M \delta $ (see the end of proof of Theorem \ref{thm:
teorema-esistenza}). We have therefore proved \eqref{e53} at least
in $\sigma(\mathcal{M}(\mathbb{R}^d),C_c(\mathbb{R}^d))$. In order
to recover convergence in
$\sigma(\mathcal{M}(\mathbb{R}^d),C_b(\mathbb{R}^d)) $, it
suffices to show that
\begin{equation*}\label{e54}
\lim_{t \to 0} \| |x|^{-\gamma}u(t) \|_1 = M \, ;
\end{equation*}
but this is a consequence of \eqref{e51} and weak$^\ast$\! lower
semi-continuity.

From Claims 1 and 5 we conclude that $ u $ solves \eqref{e10} in
the sense of Definition \ref{defsol2-barenblatt}, and therefore
coincides with $ u_M $ in view of the uniqueness result in Theorem
\ref{thm: teorema-esistenza}.
\end{proof}

We are now in position to prove Theorem \ref{thmab1}.
\begin{proof}[Proof of Theorem \ref{thmab1}]
With no loss of generality we can and shall assume that $ u_0 \in L^1_\rho(\mathbb{R}^d) \cap L^\infty(\mathbb{R}^d) $ (recall the smoothing effect \eqref{eq: smoothing-effect-general}).

Take any sequence $ \lambda_n \to \infty $. Our first aim is to
prove that, along any of the subsequences $ \{ \lambda_m\} \subset
\{ \lambda_n\} $ given by Proposition \ref{prorisc}, there holds
\begin{equation}\label{eq: first-conv-loc}
\lim_{m \to \infty} \int_{B_R} \left| u_{\lambda_m}(x,t) -
u^{c_\infty}_M(x,t) \right| |x|^{-\gamma} \,\mathrm{d}x =0 \ \ \
\forall R>0 \, , \ \forall t>0 \, .
\end{equation}
Thanks to the smoothing estimates \eqref{eq:
smoothing-effect-general}, \eqref{e18} and to the fact that for
almost every $t>0$ we know that $\{ u_{\lambda_m}(t) \}$ converges
pointwise almost everywhere to $ u^{c_\infty}_M(t) $, by dominated
convergence
\begin{equation}\label{eq: first-conv-loc-unweight}
\lim_{m \to \infty} \int_{B_R} \left| u_{\lambda_m}(x,t) -
u^{c_\infty}_M(x,t) \right| \,\mathrm{d}x =0 \ \ \ \forall R>0 \,
, \ \textrm{for a.e. } t>0 \, .
\end{equation}
Moreover, estimate \eqref{eq: stima-strong} for $u_\lambda$ reads (see Appendix \ref{rd})
\begin{equation}\label{eq: stima-strong-lambda}
\left\| {(u_\lambda)}_t(t) \right\|_{1,{\rho_\lambda}} \le
\frac{2}{(m-1) \, t} M  \ \ \ \textrm{for a.e.\ } t>0  \, .
\end{equation}
Gathering \eqref{eq: stima-strong-lambda} and \eqref{eq:
bounds-rho-pre}, we can assert that for every $R,\tau>0$ there
exists a positive constant $C(R,\tau)$ (independent of
$\lambda$) such that
\begin{equation}\label{eq: stima-strong-lambda-tau}
\left\| {(u_\lambda)}_t(t) \right\|_{L^1(B_R)} \le C(R,\tau) \ \ \
\textrm{for a.e.\ } t \ge \tau  \, .
\end{equation}
Of course \eqref{eq: stima-strong-lambda-tau} also holds for
$u^{c_\infty}_M$. It is now possible to infer that \eqref{eq:
first-conv-loc-unweight} actually holds for \emph{every} $t>0$:
\begin{equation}\label{eq: first-conv-loc-unweight-every}
\lim_{m \to \infty} \int_{B_R} \left| u_{\lambda_m}(x,t) -
u^{c_\infty}_M(x,t) \right| \,\mathrm{d}x =0 \ \ \ \forall R>0 \,
, \ \forall t>0 \, .
\end{equation}
In fact, for any given $t_0,\varepsilon >0$, there exists $t>t_0$
such that \eqref{eq: first-conv-loc-unweight} holds and
$|t-t_0|\le \varepsilon$. Exploiting \eqref{eq:
stima-strong-lambda-tau}, we get:
\begin{equation}\label{eq: first-conv-loc-unweight-every-proof}
\begin{aligned}
& \int_{B_R} \left| u_{\lambda_m}(x,t_0) - u^{c_\infty}_M(x,t_0) \right| \,\mathrm{d}x \\
\le & \int_{B_R} \left| u_{\lambda_m}(x,t_0) - u_{\lambda_n}(x,t) \right| \,\mathrm{d}x + \int_{B_R} \left| u_{\lambda_m}(x,t) - u^{c_\infty}_M(x,t) \right| \,\mathrm{d}x  + \int_{B_R} \left| u^{c_\infty}_M(x,t) - u^{c_\infty}_M(x,t_0) \right| \,\mathrm{d}x \\
\le & 2 \, C(R,t_0) \, \varepsilon + \int_{B_R} \left|
u_{\lambda_m}(x,t) - u^{c_\infty}_M(x,t) \right| \,\mathrm{d}x \, .
\end{aligned}
\end{equation}
Letting $ m \to \infty $ in \eqref{eq: first-conv-loc-unweight-every-proof} yields
\begin{equation}\label{eq: first-conv-loc-unweight-every-proof-2}
 \limsup_{m \to \infty} \int_{B_R} \left| u_{\lambda_m}(x,t_0) - u^{c_\infty}_M(x,t_0) \right| \,\mathrm{d}x \le 2 \, C(R,t_0)  \varepsilon \, .
\end{equation}
Letting now $ \varepsilon \to 0 $ in \eqref{eq:
first-conv-loc-unweight-every-proof-2} shows that \eqref{eq:
first-conv-loc-unweight} holds for $t=t_0$ as well. The validity
of \eqref{eq: first-conv-loc} is then just a consequence of
\eqref{eq: first-conv-loc-unweight-every}, the local integrability of $|x|^{-\gamma}$ and the uniform bound over $\| u_{\lambda_m}(t) - u^{c_\infty}_M(t) \|_\infty $ ensured by the smoothing estimates \eqref{eq: smoothing-effect-general} and \eqref{e18}.

The consequence of Proposition \ref{prorisc} and what we proved
above is that \emph{any} sequence $\lambda_n \to \infty $
satisfies \eqref{eq: first-conv-loc} along subsequences. We can
thus infer that
\begin{equation}\label{eq: first-conv-loc-lambda}
\lim_{\lambda \to \infty} \int_{B_R} \left| u_{\lambda}(x,t) -
u^{c_\infty}_M(x,t) \right| |x|^{-\gamma} \,\mathrm{d}x =0 \ \ \
\forall R>0 \, , \ \forall t>0 \, .
\end{equation}
Upon fixing $t=1$, relabelling $\lambda$ as $t$ and recalling the
definition of $u_\lambda$, note that \eqref{eq: first-conv-loc-lambda} reads
\[
\lim_{t \to \infty} \int_{B_R} \left| t^\alpha u(t^\kappa x, t) -
u^{c_\infty}_M(x,1) \right| |x|^{-\gamma} \,\mathrm{d}x =0 \ \ \
\forall R>0 \, .
\]
Performing the change of variable $ y=t^\kappa x $ and using the fact that $\alpha+\kappa(\gamma-d)=0 $, we obtain:
\begin{equation}\label{eq: first-conv-loc-relabel-1}
\lim_{t \to \infty} \int_{B_{Rt^\kappa}} \left| u(y,t) -
t^{-\alpha}u^{c_\infty}_M(t^{-\kappa}y,1) \right| |y|^{-\gamma}
\,\mathrm{d}y = \lim_{t \to \infty} \int_{B_{Rt^\kappa}} \left| u(y,t) -
u^{c_\infty}_M(y,t) \right| |y|^{-\gamma} \,\mathrm{d}y = 0
\end{equation}
for all $R>0$, where we used \eqref{eq: barenblatt-rescaled-identity} with $\lambda=t^{-1}$.

From now on we shall denote as $ \varepsilon_R $ any function of the spatial variable (possibly constant) which is independent of $t$ and vanishes uniformly as $R \to \infty $. Going back to the original variable $ x=t^{-\kappa} y $ we find that
\begin{equation}\label{eq: first-conv-loc-relabel-3}
\int_{B^c_{Rt^\kappa}} u^{c_\infty}_M(y,t) \,
|y|^{-\gamma} \,\mathrm{d}y = \int_{B_{R}^c}
u^{c_\infty}_M(x,1) \, |x|^{-\gamma} \,\mathrm{d}x = \varepsilon_R \ \ \ \forall R>0 \, .
\end{equation}
Hence, the conservation of mass for $u^{c_\infty}_M$, \eqref{eq: first-conv-loc-relabel-1} and \eqref{eq: first-conv-loc-relabel-3} imply that
\begin{equation}\label{eq: first-conv-loc-relabel-5}
\lim_{t \to \infty} \int_{B_{Rt^\kappa}} u(y,t) \, |y|^{-\gamma}
\,\mathrm{d}y = M c_\infty^{-1}-\varepsilon_R  \ \ \ \forall R>0
\, .
\end{equation}
Next we show that
\begin{equation}\label{eq: massa-reg-sing}
\lim_{t \to \infty} \int_{\mathbb{R}^d} u(y,t) \, |y|^{-\gamma}
\,\mathrm{d}y = M c_\infty^{-1} \, .
\end{equation}
To this end first notice that, thanks to \eqref{eq: ass-rho-slow-bis} and \eqref{eq: ass-rho-slow-lim}, there holds
\[
|y|^{-\gamma} = \frac{\rho(y)}{c_\infty + \varepsilon_R(y)} \ \ \ \forall y \in B_R^c \, ,
\]
whence
\begin{equation}\label{eq: massa-reg-sing-1}
\int_{\mathbb{R}^d} u(y,t) \, |y|^{-\gamma} \,\mathrm{d}y =
\int_{B_R} u(y,t) \, |y|^{-\gamma} \,\mathrm{d}y +
\int_{B^c_R} u(y,t) \,
\frac{\rho(y)}{c_\infty+\varepsilon_R(y)} \,\mathrm{d}y \, .
\end{equation}
Thanks to \eqref{eq: massa-reg-sing-1} and the conservation of mass \eqref{eq: cons-mass} for $u$, we get:
\begin{equation}\label{eq: massa-reg-sing-2}
\begin{aligned}
\left| \int_{\mathbb{R}^d} u(y,t) \, |y|^{-\gamma} \,\mathrm{d}y - Mc_\infty^{-1} \right| = & \left| \int_{\mathbb{R}^d} u(y,t) \, |y|^{-\gamma} \,\mathrm{d}y - \int_{\mathbb{R}^d} u(y,t) \, \frac{\rho(y)}{c_\infty} \,\mathrm{d}y \right| \\
\le &  \int_{B_R} u(y,t) \, |y|^{-\gamma} \,\mathrm{d}y \\
   & + \int_{B_R} u(y,t) \, \frac{\rho(y)}{c_\infty} \,\mathrm{d}y + \frac{\left\|\varepsilon_R\right\|_\infty}{c_\infty(c_\infty-\left\|\varepsilon_R\right\|_\infty)} \, \int_{B^c_R} u(y,t) \, \rho(y) \,\mathrm{d}y \, .
\end{aligned}
\end{equation}
Letting $t \to \infty$ in \eqref{eq: massa-reg-sing-2}, using the
smoothing effect \eqref{eq: smoothing-effect-general} (as a decay estimate) and the fact that both $\rho(y)$ and $|y|^{-\gamma}$ are locally integrable, we obtain:
\begin{equation}\label{eq: massa-reg-sing-3}
\limsup_{t \to \infty} \left| \int_{\mathbb{R}^d} u(y,t) \,
|y|^{-\gamma} \,\mathrm{d}y - M c_\infty^{-1} \right| \le \frac{M
\left\|\varepsilon_R\right\|_\infty}{c_\infty(c_\infty-\left\|\varepsilon_R\right\|_\infty)} \, .
\end{equation}
By letting $R \to \infty $ in \eqref{eq: massa-reg-sing-3} we get \eqref{eq: massa-reg-sing}. Now notice that
\begin{equation}\label{eq: first-conv-glob-1}
\begin{split}
\int_{\mathbb{R}^d} \left| u(y,t) - u^{c_\infty}_M(y,t) \right| |y|^{-\gamma} \,\mathrm{d}y \le  & \int_{B_{Rt^\kappa}} \left| u(y,t) - u^{c_\infty}_M(y,t) \right| |y|^{-\gamma} \,\mathrm{d}y \\
& + \int_{B^c_{Rt^\kappa}} u(y,t) \,
|y|^{-\gamma} \,\mathrm{d}y + \int_{B^c_{Rt^\kappa}} u^{c_\infty}_M(y,t) \, |y|^{-\gamma} \,\mathrm{d}y
\, .
\end{split}
\end{equation}
Moreover, \eqref{eq: first-conv-loc-relabel-5} and \eqref{eq: massa-reg-sing} imply that
\begin{equation}\label{eq: first-conv-glob-2}
\lim_{t \to \infty} \int_{B^c_{Rt^\kappa}}
u(y,t) \, |y|^{-\gamma} \,\mathrm{d}y = \varepsilon_R \, .
\end{equation}
Collecting \eqref{eq: first-conv-loc-relabel-1}, \eqref{eq: first-conv-loc-relabel-3}, \eqref{eq: first-conv-glob-1} and \eqref{eq: first-conv-glob-2} we finally get
\[
\limsup_{t \to \infty} \int_{\mathbb{R}^d} \left| u(y,t) -
u^{c_\infty}_M(y,t) \right| |y|^{-\gamma} \,\mathrm{d}y \le 2
\varepsilon_R \, ,
\]
whence \eqref{e104} follows by letting $R \to \infty $. The
validity of \eqref{e104-equiv} is just a consequence of
\eqref{e104} and the change of variable $y=t^\kappa x $ (one
exploits again the scaling property \eqref{eq:
barenblatt-rescaled-identity} of $u^{c_\infty}_M$).
\end{proof}

\appendix
\section{Well posedness of the parabolic problem for rapidly decaying densities} \label{rd}
Throughout this section, we shall use of the same notations as in Section \ref{subl}.

\noindent{\bf Part I.} If $\rho\in L^\infty_{\rm loc}({\mathbb R}^d)$ is positive and such that $\rho^{-1}\in L^\infty_{\rm loc}({\mathbb R}^d)$, $u_0$ is nonnegative and such that $u_0\in L^1_\rho(\mathbb{R}^d) \cap L^\infty(\mathbb{R}^d)$, then we can argue as in the proof of \cite[Theorem 7.3 (first construction)]{DQRV} in order to get the existence of a weak solution to problem \eqref{eq: regular}, in the sense of Definition \ref{defsol1}, which is bounded in the whole of $\mathbb{R}^d \times(0,\infty)$. Furthermore, the following $L^1_\rho$ comparison principle holds true:
\begin{equation}\label{e23b}
\int_{\mathbb{R}^d} \left[ u_1(x,t)- u_2(x,t) \right]_{+}  \rho(x) \mathrm{d}x \leq \int_{\mathbb{R}^d} \left[ u_{01} - u_{02} \right]_{+}  \rho(x) \mathrm{d}x \ \ \ \forall
t>0 \, ,
\end{equation}
where $u_1$ and $ u_2 $ are the solutions to problem \eqref{eq: regular}, constructed as above, corresponding to the initial data $u_{01}  \in L^1_\rho(\mathbb{R}^d) \cap L^\infty(\mathbb{R}^d)$ and $u_{02} \in L^1_\rho(\mathbb{R}^d) \cap L^\infty(\mathbb{R}^d) $, respectively.

As for uniqueness, a quite standard result for (suitable) weak solutions to problem \eqref{eq: regular} is the following.
\begin{pro}\label{Ol}
Let $\rho \in L^\infty_{\rm loc}({\mathbb R}^d)$ be positive and
such that $\rho^{-1}\in L^\infty_{\rm loc}({\mathbb R}^d)$. Let $u$ and $ v$ be two nonnegative weak solutions to \eqref{eq: regular}, corresponding to the same nonnegative $u_0 \in L^{1}_{\rho}(\mathbb{R}^d)$, in the sense that:
\begin{equation}\label{e2u-req1}
u ,v \in L^{m+1}_{\rho}(\mathbb{R}^d\times(0,\infty)) \, ,
\end{equation}
\begin{equation}\label{e2u-req2}
u^m, v^m \in L^2_{\rm loc}([0, \infty); \dot{H}^{s}(\mathbb{R}^d))
\end{equation}
and
\begin{equation}\label{e2u}
\begin{aligned}
& - \int_0^\infty \int_{\mathbb{R}^d}  u(x,t) \varphi_t(x,t) \, \rho(x) \mathrm{d}x \mathrm{d}t + \int_0^\infty \int_{\mathbb{R}^d} (-\Delta)^{\frac s 2}(u^m)(x,t) (-\Delta)^{\frac s 2} (\varphi)(x,t) \, \mathrm{d}x \mathrm{d}t \\
=& - \int_0^\infty \int_{\mathbb{R}^d}  v(x,t) \varphi_t(x,t) \, \rho(x) \mathrm{d}x \mathrm{d}t + \int_0^\infty \int_{\mathbb{R}^d} (-\Delta)^{\frac s 2}(v^m)(x,t) (-\Delta)^{\frac s 2} (\varphi)(x,t) \, \mathrm{d}x \mathrm{d}t \\
=& \int_{\mathbb R^d} u_0(x) \varphi(x,0) \, \rho(x) \dd x
\end{aligned}
\end{equation}
holds true for any $\varphi \in C^\infty_c(\mathbb{R}^d\times [0,\infty))$. Then, $u=v$ a.e.\ in $\mathbb R^d\times (0,\infty)$.
\begin{proof}
In view of the hypotheses on $u$ and $ v$, using a standard approximation argument one can show that the so-called \emph{Ole\u{\i}nik's test function}
$$ \varphi(x,t)=\int_t^{T}\left[u^m(x,\tau)-v^m(x,\tau)\right] \, \mathrm{d}\tau \ \ \textrm{in } \mathbb R^d \times (0,T] \, , \quad \varphi\equiv 0 \ \ \textrm{in } \mathbb R^d\in (T,\infty) \, , $$
is in fact an admissible test function in the weak formulations \eqref{e2u} (for each $T>0$). The conclusion then follows by arguing exactly as in \cite[Theorem 6.1]{DQRV} (see also the subsequent remark).
\end{proof}
\end{pro}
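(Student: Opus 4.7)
The plan is to apply the Ole\u{\i}nik test function trick, as the statement itself suggests. I set $U := u^m - v^m$ and introduce
$$\varphi(x,t) := \int_t^T U(x,\tau)\, \mathrm{d}\tau \ \textrm{ for } t \in [0,T], \qquad \varphi(x,t) := 0 \ \textrm{ for } t > T,$$
so that $\varphi_t = -U$ on $(0,T)$, $\varphi(\cdot,T) = 0$, and $\varphi$ is Lipschitz in time with values in (roughly) $\dot H^s(\mathbb{R}^d)$. The strategy is to subtract the two weak identities in \eqref{e2u} (whose right-hand sides coincide because of the common initial datum $u_0$) and then test the difference against $\varphi$, which after a suitable approximation argument yields
$$\int_0^T \!\! \int_{\mathbb{R}^d} (u-v)(u^m - v^m)\, \rho\, \mathrm{d}x\, \mathrm{d}t + \int_0^T \!\! \int_{\mathbb{R}^d} (-\Delta)^{s/2} U\, (-\Delta)^{s/2}\varphi\, \mathrm{d}x\, \mathrm{d}t = 0.$$

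The nonlocal term can be handled by noticing that, by linearity of $(-\Delta)^{s/2}$ in the spatial variables, $(-\Delta)^{s/2}\varphi(x,t) = \int_t^T (-\Delta)^{s/2} U(x,\tau)\, \mathrm{d}\tau$, so via Fubini and the elementary identity $\int_0^T f(t)\int_t^T f(\tau)\, \mathrm{d}\tau\, \mathrm{d}t = \tfrac12 \bigl(\int_0^T f\, \mathrm{d}t\bigr)^2$ I would obtain
$$\int_0^T \!\! \int_{\mathbb{R}^d} (u-v)(u^m-v^m)\, \rho\, \mathrm{d}x\, \mathrm{d}t + \frac{1}{2} \int_{\mathbb{R}^d}\left|\int_0^T (-\Delta)^{s/2} U(x,t)\, \mathrm{d}t\right|^2 \mathrm{d}x = 0.$$
Both summands are non-negative (the first one because $r \mapsto r^m$ is monotone on $[0,\infty)$, so $(u-v)$ and $(u^m-v^m)$ have the same sign pointwise), hence they must both vanish. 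The vanishing of the first integral already forces $u = v$ a.e.\ in $\mathbb{R}^d \times (0,T)$, and since $T>0$ is arbitrary the conclusion follows.

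The main obstacle is to rigorously justify that $\varphi$ is an admissible test function, as it is neither smooth nor compactly supported in space, and only Lipschitz in time. I would regularize by spatial cut-offs $\zeta_R$ together with mollifications in both variables, and then pass to the limit exploiting precisely the two hypotheses in the statement: condition \eqref{e2u-req1} ensures $U \in L^{(m+1)/m}_\rho(\mathbb{R}^d \times (0,T))$, which controls the time-derivative pairings $\int (u-v)\varphi_t\, \rho$ uniformly in the regularization parameters; while \eqref{e2u-req2} guarantees that $(-\Delta)^{s/2} U \in L^2_{\mathrm{loc}}([0,\infty); L^2(\mathbb{R}^d))$, which controls the $\dot H^s$-pairing and also legitimates the Fubini step above. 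The limiting procedure should follow the blueprint of \cite[Theorem 6.1]{DQRV}, with only minor modifications to accommodate the weight $\rho$ in the temporal pairing.
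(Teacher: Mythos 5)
Your proposal is correct and follows essentially the same route as the paper's: both use the Ole\u{\i}nik test function $\varphi(x,t)=\int_t^T\bigl[u^m(x,\tau)-v^m(x,\tau)\bigr]\,\mathrm{d}\tau$, justify its admissibility by approximation using \eqref{e2u-req1}--\eqref{e2u-req2}, and conclude via the sign of $(u-v)(u^m-v^m)$ together with the nonnegative square from the nonlocal term, which is exactly the computation carried out in the reference the paper invokes. You have merely unpacked the details that the paper delegates to \cite[Theorem 6.1]{DQRV}.
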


Let us discuss some further properties of the solutions we constructed, which can be proved by means of standard tools. To begin with note that, by proceeding exactly as in \cite[Lemma 8.5]{Vaz07}, one can show that $\rho \, u_t(t)$ is a Radon measure on $\mathbb{R}^d$ satisfying the inequality
\begin{equation}\label{eq: radon}
\left\| \rho \, u_t(t) \right\|_{\mathcal{M}(\mathbb{R}^d)} \le \frac{2}{(m-1) \, t} \left\| u_0 \right\|_{1,\rho} \quad \textrm{for a.e.\ } t > 0 \, ,
\end{equation}
where here, as opposed to Subsection \ref{bs}, with a slight abuse of notation we indicate by $\mathcal{M}(\mathbb{R}^d)$ the Banach space of Radon measures on $\mathbb{R}^d$ endowed with the usual norm of the total variation. Letting $z=u^{\frac{m+1}{2}}$ and following \cite[Lemma 8.1]{DQRV} we also get the validity of the estimate
\begin{equation}\label{e101}
\int_{t_1}^{t_2} \int_{\mathbb{R}^d}  \left| z_t(x,t) \right|^2  \rho(x) \mathrm{d}x \mathrm{d}t \leq C^\prime \quad \forall t_2>t_1>0
\end{equation}
for some positive constant $C^\prime$ depending on $m$, $t_1$, $t_2$ and on the initial datum. In view of \eqref{e101} and the general result provided by \cite[Theorem 1.1]{BG} one infers that $u_t \in L^1_{\rm
loc}((0,\infty);L^1_\rho(\mathbb{R}^d))$. Moreover, the inequality
\begin{equation}\label{eq: stima-strong}
\left\| u_t(t) \right\|_{1,\rho} \le \frac{2}{(m-1) \, t} \left\| u_0 \right\|_{1,\rho} \quad \textrm{for a.e.\ } t > 0
\end{equation}
holds true as a direct consequence of \eqref{eq: radon}. In particular, our solution $u$ is also a \emph{strong solution} to problem \eqref{eq: regular} in the sense of Definition \ref{defsol2}. The fact that solutions are strong permits to assert that
they also solve the differential equation in \eqref{eq: regular}, for a.e.\ $t>0$, in the $L^1$ sense. This allows to get the following energy estimate (for the details, see e.g.\ \cite[Sections 4.1 and 4.2]{GMP2}):
\begin{equation}\label{e3}
\int_{t_1}^{t_2} \int_{\mathbb{R}^d} \left|(-\Delta)^{\frac s
2}(u^m)(x,t)\right|^2 \, \mathrm{d}x \mathrm{d}t + \frac
1{m+1}\int_{\mathbb{R}^d}u^{m+1}(x,t_2) \, \rho(x) \mathrm{d}x =
\frac 1{m+1}\int_{\mathbb{R}^d} u^{m+1}(x,t_1) \, \rho(x) \mathrm{d}x \, ,
\end{equation}
for all $t_2>t_1>0 $. Furthermore, by suitably exploiting the celebrated Stroock-Varopoulos inequality (see \cite[Proposition 8.5]{DQRV} or \cite[Section 4.2]{GMP2}), one can show that for any $p \in [1,\infty] $ the $L^p_\rho$ norm of $u(t)$ does not increase in time.

Now suppose that, in addition to the above hypotheses, $\rho \in L^\infty(\mathbb{R}^d)$. Thanks to the latter assumption, from the classical fractional Sobolev embedding (we refer the reader e.g.\ to the survey paper \cite{hiker} and references quoted therein) one immediately deduces the validity of the following weighted, fractional Sobolev inequality:
\begin{equation}\label{e95}
\left\| v \right\|_{\frac{2d}{d-2s},\rho}\leq \widetilde{C}_S
\left\|(-\Delta)^{\frac s 2} (v) \right\|_{2} \ \ \ \forall v \in \dot{H}^s(\mathbb{R}^d) \, ,
\end{equation}
where $\widetilde{C}_S=\widetilde{C}_S(\| \rho \|_\infty,s,d)$ is a suitable positive constant. By interpolation it is straightforward to check that, as a consequence of \eqref{e95}, also the weighted, fractional Nash-Gagliardo-Nirenberg inequality
\begin{equation}\label{e96}
\left\| v  \right\|_{q,\rho} \leq  \widetilde{C}_{GN} \left\| (-\Delta)^{\frac
s 2} (v) \right\|_{2}^{\frac{1}{a+1}}  \left\| v \right
\|_{p,\rho}^{\frac{a}{a+1}} \ \ \ \forall v \in
L^p_{\rho}(\mathbb{R}^d) \cap \dot{H}^s(\mathbb{R}^d)
\end{equation}
holds true for any $a \ge 0$, $ p \ge 1 $ and
\[
q= \frac{2d(a+1)}{d\frac{a}p+ d-2s } \, ,
\]
where $\widetilde{C}_{GN}=\widetilde{C}_{GN}(\| \rho \|_\infty,a,p,s,d)$ is another suitable positive constant. Taking advantage of \eqref{e96}, by means of the same techniques as in \cite[Section 8.2]{DQRV} or \cite[proof of Proposition 4.6]{GMP2}, one can prove the smoothing estimate
\begin{equation}\label{e5}
\left\| u(t) \right \|_\infty \leq  K \, t^{-\alpha_p} \left\| u_0 \right \|_{p,\rho}^{\beta_p} \quad \forall t>0 \, , \ \forall p \ge 1 \, ,
\end{equation}
where
$$ \alpha_p=\frac d{d(m-1)+{2s p}} \, , \quad \beta_p=\frac{2s p \alpha_p}{d} $$
and $K=K(\| \rho \|_\infty,m,s,d)>0$.

Still under the additional assumption $\rho \in L^\infty(\mathbb{R}^d)$, it is possible to construct solutions to \eqref{eq: regular} corresponding to any nonnegative data $u_0 \in L^1_\rho(\mathbb{R}^d)$. One proceeds picking a sequence of nonnegative data $u_{0n} \in L^1_\rho(\mathbb{R}^d) \cap L^\infty(\mathbb{R}^d)$ such that $u_{0n} \to u_0 $ in $L^1_\rho(\mathbb{R}^d)$ and pass to the limit in \eqref{e2} as $n \to \infty $ by exploiting \eqref{e23b}, \eqref{e3} and \eqref{e5} for $p=1$ (see also \cite[Theorem 6.5 and Remark 6.11]{PT1}). Such solutions are still strong because the $L^1_\rho$ comparison principle \eqref{e23b} is preserved (which is in fact one of the main tools to prove that solutions are strong -- see again \cite[Section 8.1]{DQRV} and references quoted).

We have therefore proved the existence result contained in Proposition \ref{prop2c}. As concerns uniqueness, one can reason as follows. Proposition \ref{Ol}, in particular, ensures that if $ u_0 \in L^1_\rho(\mathbb{R}^d) \cap L^\infty(\mathbb{R}^d) $ then the  solution to \eqref{eq: regular} that we constructed above is unique in the class of weak solutions satisfying \eqref{e2u-req1}, \eqref{e2u-req2} and \eqref{e2u}. Moreover, any weak solution $u(x,t)$ to \eqref{eq: regular}, in the sense of Definition \ref{defsol1}, is such that $ u(x,t+\varepsilon) $ is a weak solution to \eqref{eq: regular}, corresponding to the initial datum $ u_0(x,\varepsilon) \in L^1_\rho(\mathbb{R}^d) \cap L^\infty(\mathbb{R}^d) $, satisfying \eqref{e2u-req1}, \eqref{e2u-req2} and \eqref{e2u}, for any $ \varepsilon>0 $. Thanks to these properties, one can then proceed exactly as in the proof of \cite[Theorem 6.7]{PT1}.

\smallskip

\noindent{\bf Part II.} We describe here another method for constructing weak solutions to problem \eqref{eq: regular}. Take again nonnegative initial data $u_0 \in L^1_\rho(\mathbb R^d)\cap L^\infty(\mathbb{R}^d)$ and consider the following problem (see also the discussion at the beginning of Section \ref{subl}):
\begin{equation}\label{02019}
\begin{cases}
L_s \left(\tilde{u}^m_{R}\right)= 0  & {\rm in } \ \Omega_R \times(0,\infty) \, , \\
\tilde u_{R} = 0 & {\rm on } \ \Sigma_R \times(0,\infty) \, , \\
\tilde{u}_R = u_R & {\rm on } \  \Gamma_R \times(0,\infty)  \, , \\
\displaystyle \frac{\partial \left(\tilde{u}^m_{R}\right)}{\partial y^{2s}} = \rho \displaystyle \frac{\partial u_{R}}{\partial t} & {\rm on } \  \Gamma_R \times(0,\infty) \, , \\
u_{R}= u_0 &  {\rm on } \ B_R \times \{t=0 \} \, .
\end{cases}
\end{equation}
\begin{den}\label{020110}
A weak solution to problem \eqref{02019} is a pair of nonnegative functions $(u_R,\tilde{u}_R)$ such that:
\begin{itemize}
\item{} $u_R \in C([0, \infty); L^{1}_{\rho}(B_R)) \cap L^\infty(B_R \times (\tau,\infty)) $ for all $ \tau>0 $; 
\item{} $ \tilde{u}_R^m \in L^2_{\rm loc}((0,\infty); X_0^{s}(\Omega_R))$;
\item{} $\tilde{u}_R|_{\Gamma_R \times (0,\infty)}=u_R$;
\item{} for any $\psi\in C^\infty_c((\Omega_R \cup \Gamma_R) \times (0, \infty))$ there holds
\[
-\int_0^\infty \int_{B_R} u_R(x,t) \, \psi_t(x,0,t) \, \rho(x) \dd x \dd t + {\mu_s} \int_0^\infty \int_{\Omega_R} y^{1-2s} \langle \nabla(\tilde{u}_R^m) , \nabla{\psi} \rangle(x,y,t) \, \dd x \dd y \dd t = 0 \, ;
\]
\item{} $\lim_{t\to 0} u_R(t)=u_0|_{B_R}$ in $L^1_\rho(B_R)$.
\end{itemize}
\end{den}
Weak sub- and supersolutions to \eqref{02019} are meant in agreement with Definition \ref{020110}. In addition, we say that $(u_R,\tilde{u}_R)$ is a \emph{strong solution} if $(u_R)_t \in L^\infty((\tau,\infty); L^1_{\rho}(B_R))$ for every $\tau>0$. By means of the same arguments used in the proof of \cite[Theorem 6.2]{DQRV}, it is direct to deduce the next comparison principle.
\begin{pro}\label{prop1cl} Let $\rho\in L^\infty_{\rm loc}({\mathbb R}^d)$ be positive and
such that $\rho^{-1}\in L^\infty_{\rm loc}({\mathbb R}^d)$. Let $(u^{(1)}_{R},\tilde{u}^{(1)}_R)$ and $(u^{(2)}_R,\tilde{u}^{(2)}_R)$ be a strong subsolution and a strong supersolution, respectively, to problem \eqref{02019}. Suppose that $ u^{(1)}_R \le u^{(2)}_R $ on $B_R \times \{t=0 \}$ and $\tilde{u}^{(1)}_R \le \tilde{u}^{(2)}_R $ on $\Sigma_R \times(0,\infty)$. Then $ u^{(1)}_R \le u^{(2)}_R $ in $B_R \times (0,\infty) $ and $\tilde{u}^{(1)}_R \le \tilde{u}^{(2)}_R$ in $\Omega_R \times (0,\infty)$.
\end{pro}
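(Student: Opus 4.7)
The plan is to adapt the classical $L^1$-comparison argument for the (fractional) porous medium equation, as in \cite[Theorem 6.2]{DQRV}, to the cylindrical extension setting and to the presence of the weight $\rho$. Subtracting the weak formulations solved by $(u_R^{(1)},\tilde u_R^{(1)})$ and $(u_R^{(2)},\tilde u_R^{(2)})$ (in the sense of Definition \ref{020110}), the strategy is to test the resulting inequality against a smoothed positive part of $\tilde W:=(\tilde u_R^{(1)})^m-(\tilde u_R^{(2)})^m$, so as to deduce an $L^1_\rho$-contraction estimate for $(u_R^{(1)}-u_R^{(2)})_+$ on $\Gamma_R$.

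Concretely, pick a family of smooth nondecreasing functions $H_\epsilon:\mathbb R\to[0,1]$ with $H_\epsilon(s)=0$ for $s\le 0$ and $H_\epsilon\uparrow \chi_{(0,\infty)}$ pointwise as $\epsilon\to 0^+$, together with a smooth time cutoff $\eta_\delta$ approximating $\chi_{[\tau_1,\tau_2]}$. Since $\tilde u_R^{(1)}\le\tilde u_R^{(2)}$ on $\Sigma_R$ by hypothesis, the composition $H_\epsilon(\tilde W)(\cdot,\cdot,t)$ vanishes on $\Sigma_R$, and thanks to $\tilde u_R^{(i),m}\in L^2_{\rm loc}((0,\infty);X_0^s(\Omega_R))$ it lies in $X_0^s(\Omega_R)$ for a.e.\ $t>0$; a standard density step then makes it admissible as a test function. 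Crucially, the extension contribution is nonnegative,
\[
\langle \nabla \tilde W,\nabla H_\epsilon(\tilde W)\rangle=H_\epsilon'(\tilde W)\,|\nabla\tilde W|^2\ge 0,
\]
so the remaining boundary integral is nonpositive. Using the strong regularity $\partial_t u_R^{(i)}\in L^\infty_{\rm loc}((0,\infty);L^1_\rho(B_R))$ to move the time derivative outside, passing first to $\delta\to 0$ and then to $\epsilon\to 0^+$ (the sign-matching between $\tilde W|_{\Gamma_R}$ and $u_R^{(1)}-u_R^{(2)}$ being guaranteed by monotonicity of $s\mapsto s^m$ on $[0,\infty)$), the expected outcome is
\[
\int_{B_R}\rho(x)\,(u_R^{(1)}-u_R^{(2)})_+(x,\tau_2)\,\mathrm{d}x\le \int_{B_R}\rho(x)\,(u_R^{(1)}-u_R^{(2)})_+(x,\tau_1)\,\mathrm{d}x
\]
for all $\tau_2>\tau_1>0$. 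Letting $\tau_1\to 0^+$ and using the continuity $u_R^{(i)}\in C([0,\infty);L^1_\rho(B_R))$ together with the initial-datum assumption disposes of the right-hand side, yielding $u_R^{(1)}\le u_R^{(2)}$ a.e.\ in $B_R\times(0,\infty)$.

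For the extension inequality, I would argue pointwise in time: for a.e.\ $t>0$ the function $\tilde W(\cdot,\cdot,t)$ is $L_s$-harmonic on $\Omega_R$ (both $\tilde u_R^{(i),m}$ are), is $\le 0$ on $\Sigma_R$ by assumption, and is $\le 0$ on $\Gamma_R$ by the previous step together with the monotonicity of $s\mapsto s^m$. The weak maximum principle for the degenerate elliptic operator $L_s$ in $\Omega_R$ (see e.g.\ \cite{CS1}) then gives $\tilde W\le 0$ in $\Omega_R$, whence $\tilde u_R^{(1)}\le\tilde u_R^{(2)}$.

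I expect the main technical hurdle to be the rigorous justification of the test-function choice: showing that $H_\epsilon(\tilde W)\,\eta_\delta$ (or a suitable regularization thereof) is a legitimate element of the test-function class in Definition \ref{020110}, with the chain-rule identity $\nabla H_\epsilon(\tilde W)=H_\epsilon'(\tilde W)\nabla\tilde W$ holding in the weighted Sobolev sense, and the interchange between the time derivative and the chain-rule evaluation being valid. The strong-solution hypothesis plays the decisive role here, granting both the time regularity $\partial_t u_R^{(i)}\in L^\infty_{\rm loc}((0,\infty);L^1_\rho(B_R))$ and the spatial regularity of $\tilde W$ on $\Omega_R$ for a.e.\ $t$. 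Once these approximation issues are settled, the scheme above together with the $L_s$-maximum principle closes the argument in full analogy with \cite[Theorem 6.2]{DQRV}.
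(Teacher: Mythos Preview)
Your proposal is correct and follows precisely the route the paper indicates: the paper does not spell out a proof but simply refers to \cite[Theorem 6.2]{DQRV}, and your sketch is exactly the adaptation of that argument to the weighted extension setting, including the Kato-type test function $H_\epsilon(\tilde W)$, the $L^1_\rho$-contraction on $\Gamma_R$, and the subsequent maximum principle for $L_s$ on $\Omega_R$ to recover the comparison for the extensions.
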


Making use of quite standard tools (see e.g.\ \cite{DQRV,PT1}), one can prove that for any $R>0$ and $ u_0 \in L^1_\rho(\mathbb R^d)\cap L^\infty(\mathbb{R}^d)$ there exists a unique strong solution $(u_R,\tilde{u}_R)$ to problem \eqref{02019} (in the sense of Definition \ref{020110}). Moreover, the limit function $ u=\lim_{R\to\infty} u_R$ (note that the family $ \{u_R \}$ is monotone in $R$ thanks to Proposition \ref{prop1cl}) is nonnegative, bounded in $ \mathbb{R}^d \times (0,\infty) $ and such that \eqref{e2u-req1}, \eqref{e2u-req2} and \eqref{e2u} hold true. Hence, in view of Proposition \ref{Ol}, such a $u$ necessarily coincides with the solution constructed in Part I: this in particular ensures that $ u \in C([0,\infty), L^1_\rho(\mathbb{R}^d))$. Again, for general data $u_0\in L^1_\rho(\mathbb R^d)$, we can select a sequence $\{u_{0n}\} \subset L^1_\rho(\mathbb{R}^d) \cap L^\infty(\mathbb R^d)$ such that $0\leq u_{0n} \le u_0 $ and $u_{0n} \to u_0 $ in $L^1_\rho(\mathbb{R}^d)$ and pass to the limit in \eqref{e2} as $ n \to \infty $ to get a solution to \eqref{eq: regular} in the sense of Definition \ref{defsol1} (which still coincides with the one obtained in Part I).

Finally, we should note that in \cite{DQRV} and \cite{PT1} the approximate problems are a little different from \eqref{02019} (namely, cylinders in the upper plane are used instead of half-balls). However, this change does not affect the construction of the solution $u$. Indeed, the present idea of using problem \eqref{02019} is taken from \cite[Section 2]{DQRV2}, where the case $s=1/2$ and $ \rho \equiv 1$ is studied.

\smallskip

\noindent{\bf Part III.} Let us now address the following problem, which is the analogue of \eqref{02019} in the whole upper plane:
\begin{equation}\label{eu1}
\begin{cases}
L_{s} (\tilde{u}^m) = 0 & \textrm{in } \mathbb{R}^{d+1}_+ \times (0,\infty)  \, , \\
\tilde{u}=u & \textrm{on } \partial \mathbb{R}^{d+1}_+ \times (0,\infty) \, , \\
\displaystyle \frac{\partial \left(\tilde{u}^m\right)}{\partial y^{2s}} = \rho \displaystyle \frac{\partial u}{\partial t} & \textrm{on }  \partial \mathbb{R}^{d+1}_+ \times(0,\infty) \, , \\
u = u_0 & \textrm{on } \mathbb{R}^d \times \{t=0 \} \, .
\end{cases}
\end{equation}
\begin{den}\label{defsol1u}
A nonnegative function $u$ is a local weak solution to problem \eqref{eu1} corresponding to the nonnegative initial datum $u_0 \in L^1_\rho(\mathbb{R}^d)$ if, for some nonnegative function $\tilde u$ such that
$$\tilde{u}^m \in L^2_{\rm loc}((0,\infty); X_{\rm loc}^s) \cap L^\infty(\mathbb{R}^{d+1}_+ \times (\tau,\infty)) \quad \forall \tau>0 \, , $$
there hold:
\begin{itemize}
\item $u\in C([0,\infty); L^1_{\rho}(\mathbb{R}^d))\cap L^\infty(\mathbb{R}^d\times(\tau,\infty))$ for all $\tau>0$;
\item $\tilde{u}|_{\partial\mathbb{R}^{d+1}_+ \times (0,\infty)}=u$;
\item for any $\psi\in C^\infty_c ((\mathbb R^{d+1}_+\cup \partial \mathbb R^{d+1}_+)\times (0,\infty))$,
\begin{equation}\label{e60u-ext}
-\int_0^\infty \int_{\mathbb{R}^d} u(x,t)\, \psi_t(x,0,t) \, \rho(x) \dd x \dd t + {\mu_s}  \int_0^\infty \int_{\mathbb R^{d+1}_+} y^{1-2s} \left\langle \nabla(\tilde{u}^m) , \nabla{\psi} \right\rangle(x,y,t) \, \dd x\dd y \dd t
\end{equation}
(in fact $\tilde{u}^m$ is a local extension for $u^m$);
\item for any $\varphi \in C^\infty_c(\mathbb{R}^d\times (0,\infty))$,
\begin{equation}\label{e60u}
-\int_0^\infty \int_{\mathbb{R}^d} u(x,t) \varphi_t(x,t)  \,
\rho(x) \mathrm{d}x \mathrm{d}t +  \int_0^\infty
\int_{\mathbb{R}^d} u^m(x,t) (-\Delta)^{s} (\varphi)(x,t) \,
\mathrm{d}x \mathrm{d}t  = 0 \, ;
\end{equation}
\item $ \lim_{t \to 0} u(t) = u_0$ in $L^1_{\rho}(\mathbb{R}^d) $.
\end{itemize}
Moreover, we say that $u$ is a \emph{local strong solution} if, in addition, $u_t\in
L^\infty((\tau,\infty); L^1_{\rho,\rm loc}(\mathbb{R}^d))$ for every $\tau>0$.
\end{den}
Notice that \eqref{e60u} is related to the so-called \emph{very weak} formulation of problem \eqref{eq: regular} (see also Definition \ref{defsol05} below). For local weak solutions, in general $u^m \not\in L^2_{\rm loc}((0, \infty); \dot{H}^{s}(\mathbb{R}^d))$. Hence, equivalence between \eqref{e60u-ext} and \eqref{e60u} cannot be established.

The criterion of Proposition \ref{Ol} here is not applicable in order to prove uniqueness. However, it is possible to restore the latter by imposing extra integrability conditions, as stated in Theorem \ref{prthm6}.  In order to prove it, we need some preliminaries. Given a nonnegative $f\in  C^\infty_c(\mathbb{R}^d)$, let $ h= I_{2s} \ast f $, so that
\begin{equation}\label{e61}
(-\Delta)^s (h) =  f \quad \textrm{in} \ \mathbb{R}^d \,.
\end{equation}
It is not difficult to show that $ h \in C^\infty(\mathbb{R}^d)$, $ h \ge 0 $ and
\[
h(x) + \left|\nabla h(x) \right| \leq K \, |x|^{-d+2s} \quad \forall x
\in \mathbb{R}^d
\]
for some $K>0$ (use e.g.~Lemma \ref{l100}). Now take a cut-off function $\xi \in  C^\infty_c(\mathbb{R}^d)$
such that $0 \le \xi \le 1$ in $\mathbb{R}^d$, $\xi = 1 $ in $B_{1/2}$ and
$\xi = 0 $ in $B_{1}^c$. For any $R>0$, let
\begin{equation}\label{e82v}
\xi_R(x)= \xi\left(\frac{x}{R}\right) \quad \forall x \in
\mathbb{R}^d \, .
\end{equation}
After straightforward computations, we obtain:
\[
(-\Delta)^s\left(h \xi_R \right)(x)= h(x)(-\Delta)^{s}(\xi_R)(x) +
(-\Delta)^s (h)(x) \, \xi_R(x) + \mathcal{B}(h,\xi_R)(x) \ \ \
\forall x \in \mathbb{R}^d \, ,
\]
where $\mathcal{B}(\phi_1,\phi_2)(x)$ is the bilinear form defined as
\[
\mathcal B(\phi_1,\phi_2)(x) = 2 \, C_{d,s}
\int_{\mathbb{R}^d}\frac{(\phi_1(x)-\phi_1(y))(\phi_2(x)-\phi_2(y))}{|x-y|^{d+2s}} \,
\mathrm{d}y \quad \forall x \in \mathbb{R}^d
\]
and $C_{d,s}$ is the positive constant appearing in \eqref{eq: def-basilare-frac-lap}. In view of \cite[Lemma 3.1]{PV}, we have the following crucial result.
\begin{lem}\label{lemma1v}
Let $f\in  C^\infty_c(\mathbb{R}^d)$, with $f \geq 0$, $ h= I_{2s}
\ast f $ and $\xi_R$ be as in \eqref{e82v}. Then, for any $T>0$ and
$v \in L^1_{(1+|x|)^{-d+2s}}(\mathbb{R}^d\times (0,T))$, there
holds
\[
\lim_{R \to \infty} \int_0^T \int_{\mathbb{R}^d} \left| v(x,t) \,
h(x) \, (-\Delta)^s (\xi_R)(x)\right| \, \mathrm{d}x \mathrm{d}t +
\int_0^T \int_{\mathbb{R}^d} \left| v(x,t) \,
\mathcal{B}(h,\xi_R)(x)\right| \, \mathrm{d}x \mathrm{d} t = 0 \, .
\]
\end{lem}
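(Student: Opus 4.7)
My plan is to reduce both integrals to a single pointwise bound of the form
$$ |h(x)\,(-\Delta)^s(\xi_R)(x)| + |\mathcal{B}(h,\xi_R)(x)| \,\le\, C\,(1+|x|)^{-(d-2s)}\,\omega_R(x) \, , $$
where $C$ is independent of $R$ and $\omega_R$ is uniformly bounded for $R \geq 1$ and tends to zero pointwise as $R \to \infty$. Once such an estimate is in hand, the statement follows directly from the dominated convergence theorem: the integrability assumption on $v$ provides the envelope $|v(x,t)|(1+|x|)^{-(d-2s)} \in L^1(\mathbb{R}^d \times (0,T))$, and the time variable plays a purely passive role.

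For the first summand I would exploit the scaling identity
$$ (-\Delta)^s(\xi_R)(x) \,=\, R^{-2s}\,(-\Delta)^s(\xi)(x/R) \, , $$
together with the elementary estimate $|(-\Delta)^s(\xi)(y)| \lesssim (1+|y|)^{-d-2s}$, which is a standard consequence of $\xi \in C_c^\infty(\mathbb{R}^d)$ (separate the defining singular integral into the regions $|z| \le 1$, where one uses Taylor expansion of $\xi$, and $|z| > 1$, where only the decay of the kernel is needed). This yields $|(-\Delta)^s(\xi_R)(x)| \lesssim R^d(R+|x|)^{-d-2s}$. Combining with the decay $h(x) \lesssim (1+|x|)^{-(d-2s)}$, which is case (c) of Lemma \ref{l100} applied with $\rho = f \in C_c^\infty(\mathbb{R}^d)$, produces the required bound with $\omega_R(x) = R^d(R+|x|)^{-d-2s}$. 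This $\omega_R$ is dominated by $R^{-2s}$ uniformly in $x$ and thus tends to zero uniformly on $\mathbb{R}^d$, which is much stronger than needed.

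The bilinear term is the genuinely delicate one, and here I would follow the line of argument of \cite[Lemma~3.1]{PV}. The idea is to split the $y$-integration in the definition of $\mathcal{B}(h,\xi_R)(x)$ into the near-diagonal region $|y-x|\le 1$, where one combines the Lipschitz bound $|\xi_R(x)-\xi_R(y)| \lesssim R^{-1}|x-y|$ (exploiting $\|\nabla \xi_R\|_\infty \lesssim R^{-1}$) with the local $C^{0,1}$ regularity of $h$, making the kernel integrable and producing a factor proportional to $R^{-1}$; and the far region $|y-x|>1$, where one uses the trivial bound $|\xi_R(x)-\xi_R(y)|\le 2\mathbf{1}_{\{|y|\le R\}\cup\{|x|\le R\}}$ together with the decay of $h$ and of the kernel, reducing the integral to something like $R^{d-2s}\cdot(1+|x|)^{-\alpha}$ divided by a high power of $R$. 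After rescaling $y=Rz$ and carrying out the bookkeeping, one again obtains a pointwise bound of the required form with an overall factor decaying as $R\to\infty$.

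The main obstacle is therefore the precise control of $\mathcal{B}(h,\xi_R)$: the singular kernel $|x-y|^{-d-2s}$, the different native scales of $h$ (whose nonzero singular behaviour lives on the fixed compact support of $f$) and of $\xi_R$ (whose transition region dilates with $R$), and the need to match the weight $(1+|x|)^{-(d-2s)}$ uniformly in $x$, all have to be balanced in a single estimate. Once the pointwise vanishing of both $h(x)(-\Delta)^s(\xi_R)(x)$ and $\mathcal{B}(h,\xi_R)(x)$ together with the uniform envelope is established, the conclusion is immediate by Lebesgue's theorem and no further assumption on $v$ beyond $v \in L^1_{(1+|x|)^{-(d-2s)}}(\mathbb{R}^d\times(0,T))$ is used.
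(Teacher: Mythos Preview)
Your proposal is correct and follows exactly the route the paper takes: the paper does not give an independent proof of this lemma but simply invokes \cite[Lemma~3.1]{PV}, and your sketch (pointwise estimates on $h(-\Delta)^s\xi_R$ and $\mathcal{B}(h,\xi_R)$ with a uniform $(1+|x|)^{-(d-2s)}$ envelope, then dominated convergence against $v$) is precisely the content of that reference. In fact you have supplied more detail than the paper itself, which defers entirely to the citation.
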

We are now in position to prove Theorem \ref{prthm6}.
\begin{proof}[Proof of Theorem \ref{prthm6}] 
Let $\underline{u}$ be the weak solution to problem \eqref{eq: regular} provided by Proposition \ref{prop2c}.
Its minimality in the class of solutions described by Definition
\ref{defsol1u}, namely local strong solutions, is a consequence of
the construction outlined above and the comparison principle given
in Proposition \ref{prop1cl}: the approximate solutions whose
limit is $ \underline{u} $, let them be $ u_R $ or $ u_n $, are
smaller than any local strong solution. As concerns estimate
\eqref{e10a-decay}, we only mention that it can be established by
means of the same arguments as in the proof of \cite[Theorem
5.5]{PT1}, combined with the smoothing effect \eqref{e5} (see also
\cite[Remark 6.11]{PT1}). We point out that the approximate
solutions $ u_R $ used there are those obtained by solving
\eqref{02019} in cylinders $ \mathcal{C}_R $ rather than in
half-balls $ \Omega_R $. Nevertheless, by comparison, the solution
to the Dirichlet problem in $ \Omega_R $ is below the one in $
\mathcal{C}_R $, and this is clearly enough to get the upper bound
\eqref{e10a-decay}. If $ u_0 \in L^\infty(\mathbb R^d) $ no
smoothing effect is needed, so that the validity of
\eqref{e10a-decay} down to $t_0=0$ is ensured by the fact that $
\underline{u} \in L^\infty(\mathbb R^d\times(0,\infty)) $ (see
again \cite[Theorem 5.5]{PT1}).

In order to prove our uniqueness
results, let us first assume that $ u_0 \in L^1_\rho(\mathbb R^d)
\cap L^\infty(\mathbb R^d) $. In this case, we have just shown
that
\begin{equation*}\label{eq: estim-riesz-0}
\int_{0}^{T} \underline{u}^m(x,\tau) \, \mathrm{d}\tau \le K
(I_{2s} \ast \rho)(x)
\end{equation*}
for all $T>0$. In view of Lemma \ref{l100}, it is straightforward
to check that
 $ I_{2s} \ast \rho \in L^1_{(1+|x|)^{-d+2s}}(\mathbb R^d) $ provided $ d > 4s $ and $ \gamma > 4s $, whence
 $ \underline{u}^m \in L^1_{(1+|x|)^{-d+2s}}(\mathbb R^d \times (0,T)) $ for all $ T>0 $ and such values of the parameters.
 Moreover, since $ \underline{u} \in L^1_\rho(\mathbb R^d \times (0,T)) \cap L^\infty(\mathbb R^d \times (0,T)) $ and so $ \underline{u}^m \in L^1_\rho(\mathbb R^d \times (0,T)) $,
 if $ \gamma \in (2s,d-2s] $ we have again that $ \underline{u}^m \in L^1_{(1+|x|)^{-d+2s}}(\mathbb R^d \times (0,T))
 $.

Now take another local strong
solution $u$ to \eqref{eq: regular} corresponding to the same $
u_0 \in L^1_\rho(\mathbb R^d) \cap L^\infty(\mathbb R^d) $, which
for the moment we assume to be bounded as well in the whole of $
\mathbb R^d \times (0,\infty) $. Because both $u$ and
$\underline u$ belong to
$$ C([0,\infty); L^1_{\rho}(\mathbb{R}^d)) \cap L^\infty(\mathbb{R}^d \times \mathbb (0,\infty)) \, , $$
by exploiting \eqref{e60u} it is direct to see that for any
$\varphi \in C^\infty_c(\mathbb{R}^d\times [0,\infty))$ there
holds
\begin{equation}\label{e60}
\begin{aligned}
& - \int_0^\infty \int_{\mathbb{R}^d} u(x,t) \varphi_t(x,t) \, \rho(x) \mathrm{d}x \mathrm{d}t +  \int_0^\infty \int_{\mathbb{R}^d} u^m(x,t) (-\Delta)^{s} (\varphi)(x,t) \, \mathrm{d}x \mathrm{d}t \\
= & - \int_0^\infty \int_{\mathbb{R}^d} \underline{u}(x,t) \varphi_t(x,t)  \, \rho(x) \mathrm{d}x \mathrm{d}t +  \int_0^\infty \int_{\mathbb{R}^d} \underline{u}^m(x,t) (-\Delta)^{s} (\varphi)(x,t) \, \mathrm{d}x \mathrm{d}t \\
= & \int_{\mathbb{R}^d} u_0(x) \varphi(x,0) \, \rho(x) \mathrm{d}x
\, .
\end{aligned}
\end{equation}
Let $\eta\in C^\infty([0,+\infty))$ be such that
\[
\eta(0)= 1 \, , \quad \eta \equiv 0 \ \ \textrm{in} \ [1,+\infty)
\, , \quad 0 < \eta < 1 \ \ \textrm{in} \ (0,1) \, , \quad
\eta^\prime \leq 0 \ \ \textrm{in} \ [0,+\infty) \, .
\]
For any $ T>0 $, set
\[
\eta_{T}(t) := \eta(t/T) \ \ \ \forall t\ge 0 \, .
\]
Take the test function
\[
\varphi(x,t)= h(x) \xi_R(x) \eta_{T}(t) \quad \forall(x,t) \in
\mathbb{R}^d \times [0,+\infty) \, ,
\]
where $ h $ and $ \xi_R $ are defined above, and plug it in the
weak formulation \eqref{e60} solved by $u-\underline{u}$. We get:
\begin{equation}\label{eq: diff-u-umin}
\begin{aligned}
& \int_0^{T} \int_{\mathbb{R}^d} f(x) \xi_R(x) \eta_{T}(t) \, [u^m(x,t)-\underline{u}^m(x,t)] \, \mathrm{d}x \mathrm{d}t \\
= & \int_0^{T}\int_{\mathbb{R}^d} h(x) \xi_R(x) \eta^\prime_{T}(t) \, [u(x,t)-\underline u(x,t)] \, \rho(x) \mathrm{d}x \mathrm{d}t \\
 & - \int_0^{T} \int_{\mathbb{R}^d} \left[ h(x)(-\Delta)^s(\xi_R)(x) + \mathcal{B}(h,\xi_R)(x) \right] \eta_{T}(t) \,[u^m(x,t)-\underline{u}^m(x,t)] \, \mathrm{d}x \mathrm{d}t \, .
\end{aligned}
\end{equation}
Since $\underline{u} \le u $ and $\eta^\prime_{T}\le 0$, from
\eqref{eq: diff-u-umin} we find that
\begin{equation}\label{eq: diff-u-umin-bis}
\begin{aligned}
0 \le & \int_0^{T}\int_{\mathbb{R}^d} f(x) \xi_R(x) \eta_{T}(t) \, [u^m(x,t)-\underline{u}^m(x,t)] \,\mathrm{d}x \mathrm{d}t \\
\le & \int_0^{T} \int_{\mathbb{R}^d} \left|
h(x)(-\Delta)^s(\xi_R)(x) + \mathcal{B}(h,\xi_R)(x) \right|
[u^m(x,t)+\underline{u}^m(x,t)] \, \mathrm{d}x \mathrm{d}t \, .
\end{aligned}
\end{equation}
Letting $R\to \infty$ in \eqref{eq: diff-u-umin-bis} and applying
Lemma \ref{lemma1v} with $v=u^m+\underline{u}^m$, we then infer
that $u=\underline{u}$ in the region $ \textrm{supp}\, f \times
(0,T)$. Thanks to the arbitrariness of $f$
 and $T$, this means that $u = \underline{u}$ in the whole of $\mathbb{R}^d \times (0,\infty)$.

We finally need to get rid of the
assumption $ u_0 \in L^{\infty}(\mathbb R^d) $. First notice that,
for any $ t_0>0 $, $ u $
 and $ \underline{u} $, restricted to $ \mathbb R^d \times(t_0,\infty) $, are \emph{bounded} local strong solutions with initial
 data $ u(t_0) \in L^1_\rho(\mathbb R^d) \cap L^\infty(\mathbb R^d) $ and $ \underline{u}(t_0) \in L^1_\rho(\mathbb R^d) \cap L^\infty(\mathbb R^d) $,
 respectively. Moreover, $ u^m,\underline{u}^m \in L^1_{(1+|x|)^{-d+2s}}(\mathbb R^d \times (t_0,T)) $ for all $ T> t_0 $.
 Hence, in view of the uniqueness result we proved above, they both coincide with the corresponding minimal local strong
 solutions having the same initial data. But minimal solutions are, in fact, the ones constructed above, for which, in particular, the $ L^1_\rho $ comparison principle \eqref{e23b} holds true.
 As a consequence, $ \| u(t)-\underline u(t) \|_{1,\rho} \le \| u(t_0)-\underline u(t_0) \|_{1,\rho} $ for all $ t>t_0>0 $.
 The conclusion follows by letting $ t_0 \to 0 $ and recalling that $ u,\underline{u} \in C([0,\infty); L^1_{\rho}(\mathbb{R}^d)) $.
\end{proof}

Let us consider the next definition of {\em very weak} solution to problem \eqref{eq: regular}.
\begin{den}\label{defsol05}
A nonnegative function $u\in L^\infty(\mathbb{R}^d\times
(0,\infty))$ is a very weak solution to problem \eqref{eq:
regular} corresponding to the nonnegative initial datum $u_0 \in L^\infty(\mathbb{R}^d)$ if, for any $\varphi \in C^\infty_c(\mathbb{R}^d\times [0,\infty))$,
\eqref{e60} holds true.
\end{den}

Clearly, any \emph{bounded} weak solution to \eqref{eq: regular} (according to Definition \ref{defsol1}) is also a very weak solution in the sense of Definition \ref{defsol05}.

\begin{oss}\label{oss101}\rm
As a byproduct of the method of proof of the uniqueness result in
Theorem \ref{prthm6}, it turns out that if $u_1$ and $u_2$ are
\emph{ordered very weak} solutions to problem to \eqref{eq:
regular} (i.e.~$ u_1 \le u_2 $ or $ u_2 \le u_1 $ in $
\mathbb R^d \times (0,\infty) $) such that $ u_1^m,u_2^m \in
L^1_{(1+|x|)^{-d+2s}}(\mathbb R^d \times (0,T)) $ for all $ T>0 $,
then $u_1 = u_2$.
\end{oss}

\bibliographystyle{plainnat}

\begin{thebibliography}{60}

\bibitem{AtC} I. Athanasopoulos, L. A. Caffarelli, \emph{Continuity of the temperature in boundary heat control problems}, Adv. Math. 224 (2010), 293--315.

\bibitem{BC} P. B\'enilan, M. G. Crandall, \emph{The continuous dependence on $\phi$ of solutions of $u_t-\Delta\phi(u)=0 $}, Indiana Univ. Math. J. 30 (1981), 161--177.

\bibitem{BG} P. B\'enilan, R. Gariepy, \emph{Strong solutions in $L^1$ of degenerate parabolic equations}, J. Differential Equations {119} (1995), 473--502.


\bibitem{BBDG} A. Blanchet, M. Bonforte, J. Dolbeault, G. Grillo, J. L. V\'azquez, \emph{Asymptotics of the fast diffusion equation via
entropy estimates}, Arch. Rat. Mech. Anal. {191} (2009), 347--385.

\bibitem{BSV} M. Bonforte, Y. Sire, J. L. V\'azquez, \emph{\it Existence, uniqueness and asymptotic behaviour for fractional porous medium equations on bounded domains}, preprint (2014), \url{http://arxiv.org/abs/1404.6195}.

\bibitem{BV2} M. Bonforte, J. L. V\'azquez, \emph{A priori estimates for fractional nonlinear degenerate diffusion equations on bounded
domains}, preprint (2013), \url{http://arxiv.org/abs/1311.6997}.

\bibitem{BV} M. Bonforte, J. L. V\'azquez, \emph{Quantitative local and global a priori estimates for fractional nonlinear diffusion
equations}, Adv. Math. {250} (2014), 242--284.

\bibitem{BCdP}C. Br\"{a}ndle, E. Colorado, A. de Pablo, U. S\'anchez, \emph{A concave-convex elliptic problem involving the fractional Laplacian}, Proc.\ Roy.\ Soc.\ Edinburgh Sect.\ A 143 (2013), 39--71.

\bibitem{BK} H. Brezis, S. Kamin, \emph{Sublinear Elliptic Equations in $\mathbb{R}^n$}, Manuscripta Math. {74} (1992), 87--106.

\bibitem{CS1} X. Cabr\'e, Y. Sire, \emph{Nonlinear equations for fractional Laplacians, I: Regularity, maximum principles and Hamiltonian estimates}, Ann. Inst. H. Poincar\'e Anal. Non Lin\'eaire 31 (2014), 23--53.

\bibitem{CafS} L. A. Caffarelli, L. Silvestre, \emph{An extension problem related to the fractional
Laplacian}, Comm. Partial Differential Equations {32} (2007), 1245--1260.

\bibitem{CKL} W. Choi, S. Kim, K.-A. Lee, \emph{Asymptotic behavior of solutions for nonlinear
elliptic problems with the fractional Laplacian}, preprint (2014), \url{http://arxiv.org/abs/1308.4026}.


\bibitem{DQRV2} A. de Pablo, F. Quir\'os, A. Rodr\'iguez, J. L. V\'azquez, \emph{A fractional porous medium equation}, Adv. Math. {226} (2011), 1378--1409.

\bibitem{DQRV} A. de Pablo, F. Quir\'os, A. Rodr\'iguez, J. L. V\'azquez, \emph{A general fractional porous medium equation}, Comm. Pure Appl. Math. 65 (2012), 1242--1284.

\bibitem{DiB} E. Di Benedetto, {\it Continuity of Weak Solutions to a General Porous Medium Equation,} Indiana Univ. Math. J. {32} (1983), 83--118.

\bibitem{DBVol} G. Di Blasio, B. Volzone, \emph{Comparison and regularity results for the fractional Laplacian via symmetrization methods}, J. Differential Equations {253} (2012), 2593--2615.

\bibitem{hiker} E. Di Nezza, G. Palatucci, E. Valdinoci, \emph{Hitchhiker's guide to the fractional Sobolev spaces}, Bull. Sci. Math. 136 (2012), 521--573.


\bibitem{Eid90} D. Eidus, \emph{The Cauchy problem for the nonlinear filtration equation in an inhomogeneous medium}, J. Differential Equations 84 (1990), 309--318.
%
\bibitem{EK} D. Eidus, S. Kamin, \emph{The filtration equation in a class of functions decreasing at infinity}, Proc. Amer. Math. Soc. 120 (1994), 825--830.

\bibitem{FK} A. Friedman, S. Kamin, \emph{The asymptotic behavior of gas in an $n$-dimensional porous medium}, Trans. Amer. Math. Soc. {262} (1980), 551--563.


\bibitem{GMPo} G. Grillo, M. Muratori, M. M. Porzio, \emph{Porous media equations with two weights: smoothing and decay properties of energy solutions via Poincar\'e inequalities}, Discrete Contin. Dyn. Syst. 33 (2013), 3599--3640.

\bibitem{GMP} G. Grillo, M. Muratori, F. Punzo, \emph{Conditions at infinity for the inhomogeneous filtration equation}, Ann. Inst. H. Poincar\'e Anal. Non Lin\'eaire {31} (2014), 413--428.

\bibitem{GMP2} G. Grillo, M. Muratori, F. Punzo, \emph{Weighted fractional porous media equations: existence and uniqueness of weak solutions with measure data}, preprint (2013), \url{http://arxiv.org/abs/1312.6076}.



\bibitem{IS1} R. G. Iagar, A. S\'anchez, \emph{Asymptotic behavior for the heat equation in nonhomogeneous media with critical density}, Nonlinear Anal. {89} (2013), 24--35.

\bibitem{IS2}  R. G. Iagar,  A. S\'anchez, \emph{Large time behavior for a porous medium equation in a nonhomogeneous medium with critical density}, Nonlinear Anal. {102} (2014), 226--241.

\bibitem{KKT}  S. Kamin, R. Kersner, A. Tesei, \emph{On the Cauchy problem for a class of parabolic equations with variable density}, Atti Accad. Naz. Lincei Cl. Sci. Fis. Mat. Natur. Rend. Lincei Mat. Appl. {9} (1998), 279--298.

\bibitem{KRV10} S. Kamin, G. Reyes, J. L. V\'azquez, \emph{Long time behavior for the inhomogeneous PME in a medium with rapidly decaying density}, Discrete Contin.\ Dyn.\ Syst.\ 26 (2010), 521--549.


\bibitem{KR2} S. Kamin, P. Rosenau, \emph{Nonlinear diffusion in a finite mass medium}, Comm.\ Pure Appl.\ Math.\ 35 (1982), 113--127.


\bibitem{Landkof} N. S. Landkof, ``Foundations of Modern Potential Theory'', Translated from the Russian by A. P. Doohovskoy. Die Grundlehren der mathematischen Wissenschaften, Band 180. Springer-Verlag, New York-Heidelberg, 1972.



\bibitem{NR} S. Nieto, G. Reyes, \emph{Asymptotic behavior of the solutions of the inhomogeneous porous medium equation with critical vanishing density}, Commun. Pure Appl. Anal. 12 (2013), 1123--1139.

\bibitem{Pierre} M. Pierre, \emph{Uniqueness of the solutions of $u_t - \Delta{\varphi}(u)=0$ with initial datum a measure}, Nonlinear Anal. 6 (1982), 175--187.

\bibitem{Pu1} F. Punzo, \emph{On the Cauchy problem for nonlinear parabolic equations with variable
density}, J. Evol. Equ. {9} (2009), 429--447.


\bibitem{PT1} F. Punzo, G. Terrone, \emph{On the Cauchy problem for a general fractional porous medium equation with variable density}, Nonlinear Anal. 98 (2014), 27--47.

\bibitem{PT2} F. Punzo, G. Terrone, \emph{Well-posedness for the Cauchy problem for a fractional porous medium equation with variable density in one space dimension}, Differential Integral Equations {27} (2014), 361--382.

\bibitem{PT3} F. Punzo, G. Terrone, \emph{On a fractional sublinear elliptic equation with a variable coefficient}, to appear in Appl. Anal., \url{http://dx.doi.org/10.1080/00036811.2014.902053}, preprint (2013), \url{http://arxiv.org/abs/1304.4843}.

\bibitem{PV} F. Punzo, E. Valdinoci, \emph{Uniqueness in weighted Lebesgue spaces for a class of fractional elliptic and parabolic equations},  J. Differential Equations 258 (2015), 555--587. 

\bibitem{Rad} V. D. R\u{a}dulescu, ``Qualitative Analysis of Nonlinear Elliptic Partial Differential Equations: Monotonicity, Analytic, and Variational Methods'', Contemporary Mathematics and Its Applications, 6. Hindawi Publishing Corporation, New York, 2008.

\bibitem{RV09} G. Reyes, J. L. V\'azquez, \emph{Long time behavior for the inhomogeneous PME in a medium with slowly decaying density}, Commun. Pure Appl. Anal. {8} (2009), 493--508.

\bibitem{RV06} G. Reyes, J. L. V\'azquez, \emph{The Cauchy problem for the inhomogeneous porous medium equation}, Netw. Heterog. Media {1} (2006), 337--351.

\bibitem{RV08} G. Reyes, J. L. V\'azquez, \emph{The inhomogeneous PME in several space dimensions. Existence and uniqueness of finite energy solutions}, Commun. Pure Appl. Anal. {7} (2008), 1275--1294.




\bibitem{Vaz03} J. L.  V\'azquez, \emph{Asymptotic behaviour for the porous medium equation posed in the whole space}, J. Evol. Equ. {3} (2003), 67--118.

\bibitem{VazConstr} J. L. V\'azquez, \emph{Barenblatt solutions and asymptotic behaviour for a nonlinear fractional heat equation of porous medium type}, J. Eur. Math. Soc. (JEMS) 16 (2014), 769--803. 

\bibitem{Vaz07} J. L. V\'azquez, ``The Porous Medium Equation. Mathematical Theory'', Oxford Mathematical Monographs. The Clarendon Press, Oxford University Press, Oxford, 2007.

\end{thebibliography}


\end{document}